\documentclass[11pt,reqno]{amsart}

\usepackage[utf8]{inputenc}
\usepackage[T1]{fontenc}
\usepackage{amsmath,amssymb,amsthm}
\usepackage[margin=3cm]{geometry}
\usepackage{enumitem}
\usepackage{microtype}
\usepackage{booktabs}
\usepackage{array}
\usepackage{tikz}
\usepackage{listings}
\usepackage{url}

\theoremstyle{plain}
\newtheorem{theorem}{Theorem}[section]
\newtheorem{proposition}[theorem]{Proposition}
\newtheorem{lemma}[theorem]{Lemma}
\newtheorem{corollary}[theorem]{Corollary}
\newtheorem{conjecture}[theorem]{Conjecture}
\theoremstyle{definition}
\newtheorem{definition}[theorem]{Definition}

\theoremstyle{remark}
\newtheorem{remark}[theorem]{Remark}

\newcommand{\Z}{\mathbb Z}
\newcommand{\F}{\mathbb F}
\newcommand{\Lamone}{\Lambda_{\mathbf1}}
\newcommand{\Sred}{\mathsf S_{\mathrm{red}}}
\newcommand{\BG}{B_{\Gamma}}
\newcommand{\rad}{\operatorname{rad}}
\DeclareMathOperator{\ex}{exp}

\begin{document}
\raggedbottom

\title{Algebraic characterizations of generating and affinely generating $\Gamma$-magic maps and $\Gamma$-distance magic labelings on regular graphs}

\author{Ahmet Batal}
\address{Department of Mathematics, Izmir Institute of Technology, 35430, Urla, Izmir, Turkey}
\email{ahmetbatal@iyte.edu.tr}

\subjclass[2020]{05C78, 05C25, 05C50, 20K01}
\keywords{Group-valued magic map, distance magic labeling, finite abelian group, Smith normal form, reduced adjacency Smith group, Cayley graph, strongly regular graph}

\begin{abstract}
A graph $G$ of order $n$ is \emph{distance magic} if it admits a bijection from $V(G)$ to $\{1,\ldots,n\}$ whose open-neighborhood sums are constant. Given an abelian group $\Gamma$ of order $n$, the graph $G$ is \emph{$\Gamma$-distance magic} if it admits a bijection $f\colon V(G)\to\Gamma$ whose open-neighborhood sums are constant in $\Gamma$. The graph $G$ is \emph{group distance magic} if it is $\Gamma$-distance magic for every abelian group $\Gamma$ of order $n$.

More generally, for an arbitrary finite abelian group $\Gamma$, a $\Gamma$-magic map is a map $f\colon V(G)\to\Gamma$ whose open-neighborhood sums are constant. We introduce two generation conditions for such maps. We call $f$ \emph{generating} if its labels generate $\Gamma$, and \emph{affinely generating} if its pairwise differences generate $\Gamma$.

Let $G$ be regular, and write the invariant factor decomposition of $\Gamma$ as
\[
  \Gamma\cong \Z/d_1\Z\oplus\cdots\oplus\Z/d_r\Z,
  \qquad d_1\mid\cdots\mid d_r,
\]
and put $\BG=\bigoplus_{i=1}^{r-1}\Z/d_i\Z$, with $\BG=\{0\}$ when $r=1$. Let $\overline A_m$ denote the endomorphism induced by the adjacency operator on the quotient of $(\Z/m\Z)^{V(G)}$ by the constant vectors. We prove that $G$ admits a generating $\Gamma$-magic map if and only if $\BG\hookrightarrow\ker\overline A_{d_r}$, whereas it admits an affinely generating $\Gamma$-magic map if and only if $\Gamma\hookrightarrow\ker\overline A_{d_r}$. When $|V(G)|=|\Gamma|$, the graph is $\Gamma$-distance magic if and only if the latter embedding can be chosen with vertex-separating image. If the reduced adjacency operator is nonsingular over $\mathbb Q$, the first two conditions become subgroup conditions in the reduced adjacency Smith group.

Cichacz and Froncek conjectured that every distance magic graph is group distance magic. Using the affine criterion, we construct a $6$-regular distance magic graph of order $27$ that admits no affinely generating $(\Z/3\Z)^3$-magic map. Thus their conjecture fails even after bijectivity is weakened to affine generation. We propose the generating group distance magic conjecture that every distance magic graph of order $n$ admits a generating $\Gamma$-magic map for every abelian group $\Gamma$ of order $n$. We prove this conjecture for regular distance magic graphs when $\Gamma$ is generated by at most two elements, and consequently for regular distance magic graphs of cube-free order. For Cayley graphs on elementary abelian $p$-groups, our nonexistence result when the regularity is not divisible by $p$, combined with a known sufficient condition when the regularity is divisible by $p$, yields an exact criterion for the existence of a $\Gamma$-distance magic labeling over some abelian group $\Gamma$ of the graph order. Further applications concern the Hamming relation graphs, strongly regular graphs, and incidence graphs of symmetric designs.
\end{abstract}

\maketitle
\pagestyle{plain}

%======================================================================
\section{Introduction}

Let $G$ be a finite simple graph and let $\Gamma$ be a finite abelian group written additively. For $v\in V(G)$, let $N_G(v)$ denote the neighborhood of $v$ in $G$. A map
\[
  f\colon V(G)\longrightarrow\Gamma
\]
is a \emph{$\Gamma$-magic map} if there exists $\gamma\in\Gamma$ such that
\[
  \sum_{u\in N_G(v)}f(u)=\gamma
  \qquad\text{for every }v\in V(G).
\]
A $\Gamma$-distance magic labeling is a bijective $\Gamma$-magic map with $|V(G)|=|\Gamma|$~\cite{Fro13,CFSZ16}. An \emph{ordinary distance magic labeling} of a graph of order $N$ is a bijection $\ell\colon V(G)\to\{1,\ldots,N\}$ whose open-neighborhood sums are constant. A graph admitting such a labeling is called \emph{distance magic}. A graph of order $N$ is \emph{group distance magic} if it is $\Gamma$-distance magic for every abelian group $\Gamma$ of order $N$.

For a family $(x_i)_{i\in I}$ of elements of $\Gamma$, write $\langle x_i\mid i\in I\rangle$ for the subgroup of $\Gamma$ generated by this family.

To separate the algebraic content of bijectivity from the requirement that all labels be distinct, we consider two generation conditions on $\Gamma$-magic maps. Both are implied by bijectivity when $|V(G)|=|\Gamma|$.

\begin{definition}\label{def:generation-conditions}
Let $f\colon V(G)\to\Gamma$ be a $\Gamma$-magic map. We say that $f$ is \emph{generating} if
\[
  \left\langle f(v)\mid v\in V(G)\right\rangle=\Gamma.
\]
The \emph{affine difference group} of $f$ is
\[
  D(f)=
  \left\langle f(v)-f(w)\mid v,w\in V(G)\right\rangle.
\]
We say that $f$ is \emph{affinely generating} if $D(f)=\Gamma$.
\end{definition}

Thus
\[
  \Gamma\text{-distance magic}
  \Longrightarrow
  \text{affinely generating }\Gamma\text{-magic}
  \Longrightarrow
  \text{generating }\Gamma\text{-magic}.
\]
These implications are recorded in Proposition~\ref{prop:affine-basic}.

Let $G$ be $k$-regular with adjacency matrix $A$, and write $\mathbf1$ for the all-one vector indexed by $V(G)$. For $m\ge1$, let $A_m$ be the endomorphism of $(\Z/m\Z)^{V(G)}$ obtained by reducing $A$ modulo $m$. Since $A_m\mathbf1=k\mathbf1$, the submodule $(\Z/m\Z)\mathbf1$ of constant vectors is $A_m$-invariant. Hence $A_m$ induces an endomorphism
\[
  \overline A_m\colon
  \frac{(\Z/m\Z)^{V(G)}}{(\Z/m\Z)\mathbf1}
  \longrightarrow
  \frac{(\Z/m\Z)^{V(G)}}{(\Z/m\Z)\mathbf1},
  \qquad
  \overline A_m([x])=[A_mx].
\]
We call $\overline A_m$ the \emph{reduced adjacency operator modulo $m$}. For a homomorphism $T$, we write $\ker T$ for its kernel. Let $x_v$ denote the coordinate of $x$ indexed by $v$. For a class $[x]$, the difference $x_v-x_w$ is independent of the chosen representative. A subgroup $W\le\ker\overline A_m$ is \emph{vertex-separating} if, for every pair of distinct vertices $v,w$, some $[x]\in W$ satisfies $x_v-x_w\ne0$ in $\Z/m\Z$.
For a finite abelian group $H$, its exponent is
$
  \ex(H)=\min\{m\ge1:mh=0\text{ for every }h\in H\}.
$
The notation $H\hookrightarrow K$ means that there exists an injective homomorphism from $H$ to $K$, or equivalently that $H$ is isomorphic to a subgroup of $K$. 

The main result gives a common modular description of generating $\Gamma$-magic maps, affinely generating $\Gamma$-magic maps, and $\Gamma$-distance magic labelings.

\begin{theorem}\label{thm:main-characterization}
Let $G$ be a regular graph and let $\Gamma$ be a finite abelian group with invariant factor decomposition
\[
  \Gamma\cong
  \Z/d_1\Z\oplus\cdots\oplus\Z/d_r\Z,
  \qquad
  d_1\mid\cdots\mid d_r.
\]
Put
\[
  \BG=\bigoplus_{i=1}^{r-1}\Z/d_i\Z,
\]
where $\BG=\{0\}$ when $r=1$. Then $d_r=\ex(\Gamma)$, and the following statements hold.
\begin{enumerate}[label=\emph{(\alph*)},leftmargin=2.2em]
\item The graph $G$ admits a generating $\Gamma$-magic map if and only if
\[
  \BG\hookrightarrow\ker\overline A_{d_r}.
\]
\item The graph $G$ admits an affinely generating $\Gamma$-magic map if and only if
\[
  \Gamma\hookrightarrow\ker\overline A_{d_r}.
\]
\item If $|V(G)|=|\Gamma|$, then $G$ admits a $\Gamma$-distance magic labeling if and only if
\[
  \Gamma\hookrightarrow\ker\overline A_{d_r}
\]
with vertex-separating image.
\end{enumerate}
\end{theorem}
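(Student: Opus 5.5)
Put $m=d_r=\ex(\Gamma)$; that $d_r=\ex(\Gamma)$ follows from the divisibility chain. The plan rests on a duality between maps and homomorphisms. A map $f\colon V(G)\to\Gamma$ is the same as a homomorphism $\phi_f\colon\widehat\Gamma\to(\Z/m\Z)^{V(G)}$, where $\widehat\Gamma=\operatorname{Hom}(\Gamma,\Z/m\Z)\cong\Gamma$, given by $\chi\mapsto(\chi(f(v)))_v$. This works because every character of $\Gamma$ takes values in $\Z/m\Z$, and characters separate points. Under this dictionary:
\begin{enumerate}[label=(\roman*),leftmargin=2.2em]
\item $f$ is $\Gamma$-magic iff for every $\chi$ the vector $A_m\phi_f(\chi)$ is constant, i.e.\ $[\phi_f(\chi)]\in\ker\overline A_m$. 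Indeed, constancy of $\sum_{u\in N(v)}f(u)$ is equivalent to constancy of its image under every character.
\item $\langle f(v)\rangle=\Gamma$ iff $\phi_f$ is injective, since a character vanishing on all labels vanishes on the subgroup they generate.
\item $D(f)=\Gamma$ iff the composite $\overline\phi_f\colon\widehat\Gamma\to(\Z/m\Z)^{V}/(\Z/m\Z)\mathbf1$ is injective, since $\chi(f(v)-f(w))=0$ for all $v,w$ exactly when $\phi_f(\chi)$ is constant.
\item $f$ is injective iff the image of $\overline\phi_f$ is vertex-separating.
\end{enumerate}
Conversely, any homomorphism $\psi\colon\widehat\Gamma\to(\Z/m\Z)^V$ arises as $\phi_f$ for a unique $f$, by double duality $\Gamma\cong\operatorname{Hom}(\widehat\Gamma,\Z/m\Z)$.

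Parts (b) and (c) follow quickly. Given $f$ affinely generating, $\overline\phi_f$ embeds $\widehat\Gamma\cong\Gamma$ into $\ker\overline A_m$. Conversely, given an embedding $\iota\colon\widehat\Gamma\to\ker\overline A_m$, I must lift it to $\psi$ into $(\Z/m\Z)^V$. Since $\widehat\Gamma$ is not free over $\Z/m\Z$, I will lift through a chosen coordinate: fix a vertex $v_0$ and identify the quotient with the vectors satisfying $x_{v_0}=0$, which is a splitting of the quotient map. Compose $\iota$ with this splitting to get $\psi$, and the resulting $f$ has $\overline\phi_f=\iota$. For (c), bijectivity with $|V|=|\Gamma|$ is just injectivity, so (iv) applies. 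Magicness is preserved under (i), since constancy of $A_m\psi(\chi)$ only depends on the class $[\psi(\chi)]$.

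Part (a) is the main step. I need to show that an injective $\phi\colon\widehat\Gamma\to(\Z/m\Z)^V$ with image in the preimage $K$ of $\ker\overline A_m$ exists iff $\BG\hookrightarrow\ker\overline A_m$. Note that $K\supseteq(\Z/m\Z)\mathbf1\cong\Z/m\Z$, and $K/(\Z/m\Z)\mathbf1=\ker\overline A_m$. Moreover, by the $v_0$-splitting, $K\cong\Z/m\Z\oplus\ker\overline A_m$. So the question becomes whether $\Gamma\hookrightarrow\Z/m\Z\oplus\ker\overline A_m$ is equivalent to $\BG\hookrightarrow\ker\overline A_m$.
\begin{itemize}
\item The direction "$\Leftarrow$" is immediate: $\Gamma=\BG\oplus\Z/m\Z$.
\item The direction "$\Rightarrow$" uses the cancellation fact for finite abelian groups: if $\Z/m\Z\oplus\BG\hookrightarrow\Z/m\Z\oplus H$ with $mH=0$, then $\BG\hookrightarrow H$. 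I would prove this prime by prime, comparing the numbers of cyclic summands of order at least $p^j$. Equivalently, compare the dimensions of $p^{j-1}X[p]$; these are monotone under embeddings, additive, and the summand $\Z/m\Z$ contributes the same amount to each side. Here $mH=0$ is automatic, since $H=\ker\overline A_m$ is a $\Z/m\Z$-module.
\end{itemize}

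I expect the cancellation lemma and the correct handling of the splitting to be the only places needing care. Everything else is bookkeeping of the character duality.
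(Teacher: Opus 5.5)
Your proof is correct, and it takes a different route from the paper's for part~(a).

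For parts (b) and (c) you follow essentially the paper's argument. Your $v_0$-splitting is exactly the normalization $x\mapsto x-x_{v_0}\mathbf1$ in Lemma~\ref{lem:affine-param}(b). Your items (i)--(iv) are the computations made there and in the proof of part~(c). One small omission in the forward direction of (c): state explicitly that a bijective $f$ has $D(f)=\Gamma$, so $\overline\phi_f$ is injective by (iii). Item (iv) by itself gives only vertex separation, not an embedding.

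For part (a) the two routes compare as follows.
\begin{itemize}
\item The paper first proves Lemma~\ref{lem:generating-affine}: every generating map is a translate of an affinely generating map with values in $D(f)$, and $\Gamma/D(f)$ is cyclic. It then applies Lemma~\ref{lem:affine-param} to $H=D(f)$ and uses the interlacing Lemma~\ref{lem:cyclic-quotient}: if $\Gamma/H$ is cyclic, then $\BG\hookrightarrow H$. For the converse it translates a $\BG$-valued map by a generator of $\Gamma/\BG$.
\item You keep the whole group $\Gamma$. You identify generating magic maps with embeddings $\widehat\Gamma\hookrightarrow K=\{x: A_mx\in(\Z/m\Z)\mathbf1\}$, and use $K\cong\Z/m\Z\oplus\ker\overline A_m$ to cancel the $\Z/m\Z$ summand.
\end{itemize}
Your route is more uniform: one dictionary gives all three parts, and the gap between (a) and (b) is visibly the single cyclic summand of constant vectors. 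The paper's route yields a reusable structural fact: generating maps are exactly translates of affinely generating maps on a subgroup with cyclic quotient.

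Three remarks on your cancellation step.
\begin{itemize}
\item The hypothesis $mH=0$ is not needed.
\item Monotonicity and additivity of $c_j(X)=\dim_{\F_p}(p^{j-1}X)[p]$ only give the inequalities $c_j(\BG)\le c_j(H)$ for all primes $p$ and all $j$. You must also invoke the converse: these inequalities imply $\BG\hookrightarrow H$. They say that, for each prime, the partition of cyclic-summand exponents of the $p$-primary part of $\BG$ lies inside that of $H$ (via conjugate partitions), so one can embed summand by summand.
\item Write $(p^{j-1}X)[p]$ rather than $p^{j-1}X[p]$. The latter could be read as $p^{j-1}(X[p])$, which is $0$ for $j\ge2$.
\end{itemize}
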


For the Smith-group formulation, use integral coefficients and put
\[
  \Lamone=\Z^{V(G)}/\Z\mathbf1.
\]
Since $\Z\mathbf1$ is $A$-invariant, $A$ induces
\[
  \overline A\colon\Lamone\longrightarrow\Lamone,
  \qquad
  \overline A([x])=[Ax].
\]
When $\overline A$ is nonsingular over $\mathbb Q$, its cokernel is finite. We call
\[
  \Sred(G)=\operatorname{coker}\overline A
  =\Lamone/\overline A(\Lamone)
\]
the \emph{reduced adjacency Smith group} of $G$. The term \emph{Smith group} reflects that the isomorphism type of this cokernel is determined by the Smith normal form of $\overline A$. Reduction of this integral operator modulo $m$ is identified in Section~\ref{sec:smith} with the modular operator $\overline A_m$ above. The reduced-Smith form of Theorem~\ref{thm:main-characterization} gives the following main corollary.

\begin{corollary}\label{cor:main-smith}
Let $G$ be a $k$-regular graph with $k>0$, let $\Gamma$ and $\BG$ be as in Theorem~\ref{thm:main-characterization}, and assume that $\overline A$ is nonsingular over $\mathbb Q$. Then the following statements hold.
\begin{enumerate}[label=\emph{(\alph*)},leftmargin=2.2em]
\item The graph $G$ admits a generating $\Gamma$-magic map if and only if
\[
  \BG\hookrightarrow\Sred(G).
\]
If $r\ge2$, then necessarily
\[
  d_{r-1}\mid\ex(\Sred(G))
  \qquad\text{and}\qquad
  \frac{|\Gamma|}{d_r}\mid\frac{|\det A|}{k}.
\]
\item The graph $G$ admits an affinely generating $\Gamma$-magic map if and only if
\[
  \Gamma\hookrightarrow\Sred(G).
\]
Consequently,
\[
  d_r\mid\ex(\Sred(G))
  \qquad\text{and}\qquad
  |\Gamma|\mid\frac{|\det A|}{k}.
\]
\item In particular, if $|V(G)|=|\Gamma|$ and $G$ admits a $\Gamma$-distance magic labeling, then the necessary conditions in part~\emph{(b)} hold.
\end{enumerate}
\end{corollary}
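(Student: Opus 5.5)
We derive the corollary from Theorem~\ref{thm:main-characterization} by showing that, for every $m\ge1$ and every finite abelian group $H$ with $\ex(H)\mid m$, one has $H\hookrightarrow\ker\overline A_m$ if and only if $H\hookrightarrow\Sred(G)$.

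\textbf{Step 1: reduce $\overline A$ modulo $m$.} Since $\Z\mathbf1$ is a direct summand of $\Z^{V(G)}$ (the vector $\mathbf1$ is primitive), the lattice $\Lamone$ is free of rank $n-1$, where $n=|V(G)|$. Moreover $\Lamone/m\Lamone\cong(\Z/m\Z)^{V(G)}/(\Z/m\Z)\mathbf1$, and under this identification $\overline A\otimes\Z/m\Z=\overline A_m$. Choose a Smith normal form $\overline A=P\,\mathrm{diag}(s_1,\dots,s_{n-1})\,Q$ with $P,Q\in GL_{n-1}(\Z)$. Nonsingularity over $\mathbb Q$ gives all $s_i\ne0$, and $\Sred(G)\cong\bigoplus_i\Z/s_i\Z$. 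Modulo $m$ we get $\ker\overline A_m\cong\bigoplus_i\ker(s_i\colon\Z/m\Z\to\Z/m\Z)\cong\bigoplus_i\Z/\gcd(s_i,m)\Z$. This is exactly $\Sred(G)[m]$, the $m$-torsion subgroup of $\Sred(G)$.

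\textbf{Step 2: translate the embeddings.} If $\ex(H)\mid m$, then every homomorphism $H\to\Sred(G)$ lands in $\Sred(G)[m]$. Hence $H\hookrightarrow\Sred(G)$ if and only if $H\hookrightarrow\ker\overline A_m$. Apply this with $m=d_r$ to $H=\BG$, whose exponent $d_{r-1}$ divides $d_r$, and to $H=\Gamma$, whose exponent is $d_r$. This yields the equivalences in (a) and (b) from parts (a) and (b) of Theorem~\ref{thm:main-characterization}.

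\textbf{Step 3: the numerical consequences.} An embedding $H\hookrightarrow S$ forces $\ex(H)\mid\ex(S)$ and $|H|\mid|S|$. Here $\ex(\BG)=d_{r-1}$ and $|\BG|=|\Gamma|/d_r$, so it remains to show $|\Sred(G)|=|\det\overline A|=|\det A|/k$.

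Take the basis $\mathbf1,e_2,\dots,e_n$ of $\Z^{V(G)}$ (with $e_1$ omitted). This basis change is unimodular. Since $A\mathbf1=k\mathbf1$, the matrix of $A$ in this basis is block upper triangular with blocks $k$ and the matrix of $\overline A$ on the induced basis of $\Lamone$. Therefore $\det A=k\det\overline A$, and $k>0$ lets us divide by $k$. This proves the divisibility statements in (a) and (b).

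\textbf{Step 4: part (c).} A $\Gamma$-distance magic labeling is bijective, and since $|V(G)|=|\Gamma|$ it is affinely generating by the implications recorded in Proposition~\ref{prop:affine-basic}. Part (c) then follows from (b).

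The only delicate point is Step 1, namely checking that the mod-$m$ reduction of the integral quotient is the modular quotient. This rests on the primitivity of $\mathbf1$: it makes $\Lamone$ torsion-free, so that tensoring the sequence $0\to\Z\mathbf1\to\Z^{V(G)}\to\Lamone\to0$ with $\Z/m\Z$ stays exact.
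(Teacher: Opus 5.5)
Your proof is correct, and Steps~1--2 follow the paper. The paper identifies $\Lamone/d_r\Lamone$ with the modular quotient and applies Lemma~\ref{lem:modular-kernel}, which is proved by the same Smith normal form computation you sketch, to get $\ker\overline A_{d_r}\cong\Sred(G)[d_r]$. It then uses that $d_r$ annihilates both $\Gamma$ and $\BG$.

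The real difference is in Step~3. The paper obtains $|\Sred(G)|=|\det A|/k$ from Proposition~\ref{prop:smith-exact}, in four moves:
\begin{enumerate}
\item it proves that $A$ is nonsingular;
\item it constructs a surjection $\operatorname{coker}A\twoheadrightarrow\Sred(G)$ with kernel $\langle[\mathbf1]_A\rangle$;
\item it shows this kernel has order exactly $k$, using $A^{-1}\mathbf1=k^{-1}\mathbf1$;
\item it combines this with $|\operatorname{coker}A|=|\det A|$.
\end{enumerate}
Your unimodular change of basis to $\mathbf1,e_2,\dots,e_n$ makes $A$ block upper triangular, with diagonal blocks $k$ and a matrix of $\overline A$. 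So $\det A=k\det\overline A$ follows at once, and nonsingularity of $A$ comes for free. This is shorter and is all the corollary needs. The paper's route gives more: the isomorphism $\Sred(G)\cong\operatorname{coker}A/\langle[\mathbf1]_A\rangle$, which links the reduced group to the ordinary Smith group of $A$.

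For part~(c), the paper goes through Theorem~\ref{thm:main-characterization}(c) and the vertex-separating copy of $\Gamma$. You use Proposition~\ref{prop:affine-basic}(e) and part~(b) instead. Vertex separation plays no role in the necessary conditions, so your route is the more direct one.

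A minor remark on your last paragraph: the identification $\Lamone/m\Lamone\cong(\Z/m\Z)^{V(G)}/(\Z/m\Z)\mathbf1$ already follows from right exactness of tensoring with $\Z/m\Z$. What primitivity of $\mathbf1$ actually supplies is that $\Lamone$ is free, so that $\overline A$ has a Smith normal form at all.
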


The identity
\[
  |\Sred(G)|=\frac{|\det A|}{k}
\]
is proved in Proposition~\ref{prop:smith-exact}; this accounts for the determinant divisibilities in the corollary.

Thus, in the reduced-Smith criterion, ordinary generation discards one largest invariant factor of the labeling group, whereas affine generation retains the full group. A $\Gamma$-distance magic labeling is subject to the same subgroup condition as affine generation, together with the vertex-separation requirement in the modular kernel.

The contrast between the reduced-Smith criteria for generating and affinely generating magic maps is central to our analysis of the conjecture of Cichacz and Froncek that every distance magic graph is group distance magic, originally formulated in~\cite[Conjecture~29]{CF16} and later restated in~\cite[Conjecture~1.7]{CDF20}. We show that the conjecture fails even after bijectivity is weakened to affine generation. More precisely, writing $\F_q$ for the field with $q$ elements when $q$ is a prime power, we construct a $6$-regular distance magic graph $G$ of order $27$ for which
\[
  \dim_{\F_3}\ker\overline A_3=2.
\]
Proposition~\ref{prop:group-distance-kernel-test}\emph{(a)} then shows that $G$ admits a generating $(\Z/3\Z)^3$-magic map but no affinely generating one. We therefore propose the generating group distance magic conjecture, in which affine generation is replaced by ordinary generation. The conjecture is stated without a regularity assumption, as in the original group distance magic conjecture. For regular distance magic graphs we prove it whenever the labeling group is generated by at most two elements. In particular, it holds for regular distance magic graphs of cube-free order.

Combining our nonexistence result with the known sufficient direction gives an exact distance-magic existence criterion for Cayley graphs on elementary abelian $p$-groups. If $P\cong(\Z/p\Z)^n$, then
\[
\begin{aligned}
&\operatorname{Cay}(P,S)\text{ admits a $\Gamma$-distance magic labeling}\\
&\text{for some abelian group $\Gamma$ of order $|P|$}
\end{aligned}
\quad\Longleftrightarrow\quad
p\mid|S|.
\]
This criterion yields exact existence results for the Hamming relation graphs $H_j(d,q)$ and their special cases. In each feasible case, the elementary abelian group underlying the Cayley graph can be used as the labeling group. These results do not classify an arbitrary prescribed labeling group. For comparison, Anholcer et al.\ proved that $Q_d$ is $\Gamma$-distance magic for every abelian group $\Gamma$ of order $2^d$ if and only if $d$ is even~\cite{ACFSQ21}.

A closely related nonbijective framework is that of group vertex magic labelings introduced by Kamatchi et al.~\cite{Kam20}. In that setting a labeling takes values in $\Gamma\setminus\{0\}$, whereas the $\Gamma$-magic maps considered here allow the zero element. Our emphasis is instead on generation by the labels and by their pairwise differences.

The vertex-separation viewpoint in Theorem~\ref{thm:main-characterization}\emph{(c)} has a binary predecessor in the theory of XOR-magic graphs. For a graph of order $2^n$, an open XOR-magic labeling is precisely a $(\Z/2\Z)^n$-distance magic labeling with magic constant $0$. A closed XOR-magic labeling is the corresponding closed-neighborhood analogue~\cite{Bat26}. For closed XOR-magic graphs, Siehler~\cite[Theorem~2]{Sie19} obtained nullity and row-distinctness tests from the nullspace of the closed-neighborhood matrix. Batal~\cite[Proposition~8.1]{Bat26} gave a matrix characterization for closed and open XOR-magic labelings with zero magic constant in which suitable null vectors are assembled as columns and the resulting rows are pairwise distinct. In the open case the relevant neighborhood operator is the adjacency matrix. Theorem~\ref{thm:main-characterization}\emph{(c)} extends this row-separation framework from elementary abelian $2$-groups to arbitrary finite abelian labeling groups and allows arbitrary magic constants through the reduced modular kernel.

Section~\ref{sec:modular} records some finite-abelian-group facts and proves the modular characterizations including Theorem~\ref{thm:main-characterization}. Section~\ref{sec:smith} connects the modular operators with the integral reduced adjacency operator, gives the reduced-Smith formulation, and gives an example showing that affine generation need not imply bijectivity. Section~\ref{sec:weakened-conjecture} records modular criteria for group distance magic and constructs a counterexample to the conjecture of Cichacz and Froncek, even with bijectivity weakened to affine generation. It then states the generating group distance magic conjecture and proves it for regular distance magic graphs when the labeling group has at most two generators, and consequently for regular distance magic graphs of cube-free order. Section~\ref{sec:applications} uses the modular and reduced-Smith criteria to study generating and affinely generating $\Gamma$-magic maps and $\Gamma$-distance magic labelings across several graph families. These include complete graphs, Cayley graphs on finite $p$-groups, Hamming relation graphs, classical forms graphs, strongly regular graphs, and incidence graphs of symmetric designs.

%======================================================================
\section{Modular characterizations}\label{sec:modular}

We first record elementary consequences of the affine difference group on regular graphs.

\begin{proposition}\label{prop:affine-basic}
Let $G$ be a $k$-regular graph and let $f\colon V(G)\to\Gamma$ be a $\Gamma$-magic map.
\begin{enumerate}[label=\emph{(\alph*)},leftmargin=2.2em]
\item For every $v_0\in V(G)$,
\[
  D(f)=\left\langle f(v)-f(v_0)\mid v\in V(G)\right\rangle.
\]
\item The image $f(V(G))$ is contained in a coset of $D(f)$, and $D(f)$ is the smallest subgroup of $\Gamma$ with this property.
\item For every $a\in\Gamma$, the translated map $f_a(v)=f(v)+a$ is magic and satisfies $D(f_a)=D(f)$.
\item If $f$ is affinely generating, then $f$ is generating.
\item If $f$ is a $\Gamma$-distance magic labeling, then $f$ is affinely generating.
\item If $f(v_0)=0$ for some $v_0\in V(G)$, then $f$ is affinely generating if and only if it is generating. Consequently, $G$ admits an affinely generating $\Gamma$-magic map if and only if it admits a generating $\Gamma$-magic map $g$ satisfying $g(v_0)=0$ for some $v_0\in V(G)$.
\end{enumerate}
\end{proposition}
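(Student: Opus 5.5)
The six parts are elementary, so I will prove them in order, with each later part leaning on the earlier ones. Regularity is used only in part (c).

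For part (a), put $D_0=\langle f(v)-f(v_0)\mid v\in V(G)\rangle$. Each generator of $D_0$ is one of the generators of $D(f)$, so $D_0\le D(f)$. Conversely, for any $v,w$ we have $f(v)-f(w)=(f(v)-f(v_0))-(f(w)-f(v_0))\in D_0$, so $D(f)\le D_0$.

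For part (b), part (a) gives $f(v)\in f(v_0)+D(f)$ for every $v$. Now suppose $f(V(G))\subseteq a+H$ for some subgroup $H$. Then every difference $f(v)-f(w)$ lies in $H$, hence $D(f)\le H$. This proves minimality.

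For part (c), $G$ is $k$-regular, so
\[
\sum_{u\in N_G(v)}f_a(u)=\gamma+ka
\]
for every $v$, which is constant; hence $f_a$ is magic. The differences $f_a(v)-f_a(w)=f(v)-f(w)$ are unchanged, so $D(f_a)=D(f)$.

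For part (d), every generator $f(v)-f(w)$ of $D(f)$ lies in $\langle f(v)\mid v\in V(G)\rangle$. So if $D(f)=\Gamma$, the labels already generate $\Gamma$.

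For part (e), bijectivity means $f(V(G))=\Gamma$. By part (b), $\Gamma$ is contained in a single coset of $D(f)$, so $D(f)=\Gamma$. Alternatively, apply part (a) with any $v_0$: the elements $f(v)-f(v_0)$ run over all of $\Gamma$.

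For part (f), if $f(v_0)=0$ then part (a) gives $D(f)=\langle f(v)\mid v\in V(G)\rangle$, so the two conditions coincide. For the consequence:
\begin{itemize}
\item Given an affinely generating $f$, the translate $g=f_{-f(v_0)}$ is magic with $D(g)=D(f)=\Gamma$ by part (c). It satisfies $g(v_0)=0$, and it is generating by part (d).
\item Conversely, a generating magic $g$ with $g(v_0)=0$ is affinely generating by the first sentence of part (f).
\end{itemize}

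None of these steps is a real obstacle. The only point needing care is to invoke regularity in part (c), which is exactly what makes translated maps remain magic.
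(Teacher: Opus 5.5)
Your proof is correct and follows the paper's argument essentially step by step. The only differences are cosmetic: in part~(e) you use the coset bound from part~(b) or an arbitrary base point, where the paper chooses $v_0$ with $f(v_0)=0$, and in part~(f) you cite part~(d) rather than the first assertion of~(f) to conclude that the translate is generating.
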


\begin{proof}
Part~\emph{(a)} follows from
\[
  f(v)-f(w)=\bigl(f(v)-f(v_0)\bigr)-\bigl(f(w)-f(v_0)\bigr).
\]
For part~\emph{(b)}, part~\emph{(a)} gives $f(v)\in f(v_0)+D(f)$ for every $v\in V(G)$. If $f(V(G))\subseteq a+H$ for a subgroup $H\le\Gamma$, then every label difference belongs to $H$, so $D(f)\le H$. Thus $D(f)$ is the smallest subgroup whose coset contains the image of $f$.

For part~\emph{(c)}, if $\gamma$ is the magic constant of $f$, then regularity gives
\[
  \sum_{u\in N_G(v)}f_a(u)=\gamma+ka,
\]
and translation does not change label differences. Part~\emph{(d)} holds because every label difference lies in $\langle f(V(G))\rangle$. For part~\emph{(e)}, choose $v_0$ with $f(v_0)=0$. Since $f$ is a $\Gamma$-distance magic labeling, it is bijective and hence $f(V(G))=\Gamma$. Part~\emph{(a)} therefore yields $D(f)=\Gamma$.

For part~\emph{(f)}, if $f(v_0)=0$, then part~\emph{(a)} gives
\[
  D(f)=\langle f(V(G))\rangle,
\]
which proves the first assertion. If $h$ is affinely generating, fix $v_0\in V(G)$ and put $g(v)=h(v)-h(v_0)$. By part~\emph{(c)}, $g$ is magic and affinely generating, and $g(v_0)=0$, so the first assertion shows that $g$ is generating. The converse follows directly from the first assertion.
\end{proof}

We shall also use two standard facts on finite abelian groups. The first is the finite-abelian character duality and separation result in~\cite[Corollary~3.7 and Theorems~3.13--3.14]{ConradCharacters}, in its equivalent additive formulation. See also~\cite[Section~4.2, pp.~126--129]{Nathanson00}. The second fact is the invariant-factor interlacing for subgroups with cyclic quotient established in the proof of~\cite[Lemma~3.2]{Domokos17}; see also~\cite[Chapter~II, (4.3)]{Macdonald95} and~\cite[Chapter~I, (5.16)]{Macdonald95}.

\begin{lemma}[\cite{ConradCharacters}]\label{lem:duality}
Let $H$ be a finite abelian group, let $m$ be divisible by $\ex(H)$, and put
\[
  H^*=\operatorname{Hom}(H,\Z/m\Z).
\]
The evaluation map
\[
  \operatorname{ev}_H\colon H\longrightarrow\operatorname{Hom}(H^*,\Z/m\Z),
  \qquad
  \operatorname{ev}_H(h)(\chi)=\chi(h),
\]
is an isomorphism. In particular, $H^*\cong H$. If $K<H$ is a proper subgroup, then there exists a nonzero $\chi\in H^*$ such that $\chi|_K=0$.
\end{lemma}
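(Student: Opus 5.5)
The plan is to reduce every assertion to the cyclic case via the structure theorem, and then assemble. Write $H\cong\bigoplus_j\Z/e_j\Z$ with each $e_j\mid m$, since $\ex(H)\mid m$. In $\Z/m\Z$ the elements of order dividing $e$ form the subgroup $(m/e)\Z/m\Z\cong\Z/e\Z$. Hence
\[
\operatorname{Hom}(\Z/e\Z,\Z/m\Z)\cong\Z/e\Z,
\]
generated by the character $\chi_e\colon 1\mapsto m/e$. Since $\operatorname{Hom}(-,\Z/m\Z)$ converts finite direct sums into direct sums, this gives $H^*\cong\bigoplus_j\Z/e_j\Z\cong H$. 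This settles the claim $H^*\cong H$. It also shows $\ex(H^*)\mid m$, so the same argument applies to $H^*$ and gives $|\operatorname{Hom}(H^*,\Z/m\Z)|=|H^*|=|H|$.

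Next, I would show that $\operatorname{ev}_H$ is an isomorphism. The map is clearly a homomorphism. Because the orders of $H$ and $\operatorname{Hom}(H^*,\Z/m\Z)$ agree, it suffices to prove injectivity. Take $0\ne h\in H$ and write $h=(h_j)$ in the decomposition above, with some $h_j\ne0$ in $\Z/e_j\Z$. Let $\chi$ be the composite of the projection onto the $j$-th summand with $\chi_{e_j}$. Then $\chi(h)=h_j\cdot(m/e_j)$. This is nonzero in $\Z/m\Z$ because $0<h_j<e_j$ for the representative of $h_j$. Thus $\operatorname{ev}_H(h)\ne0$.

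Finally, for the separation statement I would pass to the quotient. If $K<H$ is proper, then $H/K$ is a nonzero finite abelian group with $\ex(H/K)\mid\ex(H)\mid m$. Choose $0\ne\bar h\in H/K$. Applying the injectivity just proved to $H/K$ gives a character $\bar\chi\colon H/K\to\Z/m\Z$ with $\bar\chi(\bar h)\ne0$. Composing with the projection $H\to H/K$ yields $\chi\in H^*$ that vanishes on $K$ and is nonzero.

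The only delicate point is the hypothesis $\ex(H)\mid m$. It is exactly what guarantees that $\Z/m\Z$ contains a cyclic subgroup of order $e_j$ for every $j$. This yields both the count $|H^*|=|H|$ and the existence of the faithful characters $\chi_{e_j}$. Without it, $H^*$ could be strictly smaller than $H$ and the evaluation map would fail to be injective. Everything else is routine bookkeeping with the structure theorem.
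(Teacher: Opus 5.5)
Your proof is correct, and it takes a different route from the paper, which does not prove this lemma at all. The paper imports the duality and point-separation theorems for $\mathbb C^\times$-valued characters from Conrad's notes (and Nathanson) and asserts the ``equivalent additive formulation'' without writing out the translation.

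Your argument works directly with $\Z/m\Z$-valued characters:
\begin{enumerate}
\item You compute $\operatorname{Hom}(\Z/e\Z,\Z/m\Z)\cong\Z/e\Z$ for $e\mid m$.
\item From the structure theorem you get $|H^*|=|H|$ and $|\operatorname{Hom}(H^*,\Z/m\Z)|=|H|$.
\item You prove injectivity of $\operatorname{ev}_H$ with the explicit characters $\chi_{e_j}\circ\pi_j$.
\item You obtain the separation statement by applying that injectivity to $H/K$, using $\ex(H/K)\mid m$.
\end{enumerate}

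What this gains is that the translation step the paper leaves implicit is never needed. To use the cited multiplicative results, one must identify $\Z/m\Z$ with the group $\mu_m\subset\mathbb C^\times$ of $m$th roots of unity. One must also check that, because $\ex(H)\mid m$ and $\ex(H^*)\mid m$, every complex character of $H$ and of $H^*$ takes values in $\mu_m$. Only then is $\operatorname{Hom}(H,\Z/m\Z)$ the full character group, and only then does the double dual match $\operatorname{Hom}(H^*,\Z/m\Z)$.

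The citation route spares the paper from reproving a textbook fact. Your self-contained version costs only an appeal to the structure theorem and one cardinality count.
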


\begin{lemma}[\cite{Domokos17}]\label{lem:cyclic-quotient}
Let
\[
  \Gamma\cong
  \Z/d_1\Z\oplus\cdots\oplus\Z/d_r\Z,
  \qquad
  d_1\mid\cdots\mid d_r,
\]
and let $H\le\Gamma$. If $\Gamma/H$ is cyclic, then
\[
  \bigoplus_{i=1}^{r-1}\Z/d_i\Z
  \hookrightarrow H.
\]
\end{lemma}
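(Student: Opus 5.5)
The plan is to reduce to $p$-groups and compare invariant-factor multiplicities, using only a generator-counting bound.

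\emph{Reduction to $p$-groups.} Decompose every group into $p$-primary components. For each prime $p$ the component $H_p$ is a subgroup of $\Gamma_p$, and $\Gamma_p/H_p$ is the $p$-component of $\Gamma/H$, hence cyclic. The $p$-component of $\BG$ is obtained from $\Gamma_p$ by deleting the $p$-part of the largest invariant factor $d_r$. Since an embedding of finite abelian groups can be assembled prime by prime, it suffices to treat the case where $\Gamma$ is a $p$-group.

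\emph{The embedding criterion.} For a finite abelian $p$-group $K$ and $j\ge1$, put
\[
  n_j(K)=\dim_{\F_p}\bigl(p^{j-1}K/p^{j}K\bigr).
\]
This is the number of invariant factors of $K$ divisible by $p^j$, and it equals $d(p^{j-1}K)$, the minimal number of generators of $p^{j-1}K$. I will use the standard criterion that $K\hookrightarrow L$ if and only if $n_j(K)\le n_j(L)$ for all $j$. The "if" direction follows by matching the $i$-th largest cyclic factor of $K$ into the $i$-th largest factor of $L$. Since $\BG$ is $\Gamma$ with its largest cyclic factor removed,
\[
  n_j(\BG)=\max\bigl(n_j(\Gamma)-1,\,0\bigr).
\]
So the goal is to show $n_j(H)\ge n_j(\Gamma)-1$ for every $j$.

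\emph{The key step.} Fix $j$ and set $N=p^{j-1}\Gamma$ and $M=p^{j-1}H\le N$. The quotient $N/M$ is the image of $N$ in $\Gamma/p^{j-1}H$. I need it to be cyclic, and I will check this as follows. The map $\Gamma/H\to\Gamma/H$ given by multiplication by $p^{j-1}$ has image $(p^{j-1}\Gamma+H)/H$, which is cyclic. By the isomorphism theorem,
\[
  (p^{j-1}\Gamma+H)/H\;\cong\;p^{j-1}\Gamma/(p^{j-1}\Gamma\cap H),
\]
so this group is cyclic. However, $M$ may be strictly smaller than $p^{j-1}\Gamma\cap H$, so this does not yet give the cyclicity of $N/M$.

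To fix this, I will use surjectivity of multiplication by $p^{j-1}$ from $\Gamma$ onto $N$. Choose $g\in\Gamma$ whose class generates $\Gamma/H$. Then every element of $\Gamma$ has the form $h+tg$ with $h\in H$, and every element of $N$ is
\[
  p^{j-1}h+t\,p^{j-1}g\in M+\langle p^{j-1}g\rangle .
\]
Hence $N/M$ is cyclic, generated by the class of $p^{j-1}g$.

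It follows that $N$ is generated by a generating set of $M$ together with one extra element, so $d(N)\le d(M)+1$. Therefore $n_j(H)=d(M)\ge d(N)-1=n_j(\Gamma)-1$, and of course $n_j(H)\ge0$. Hence $n_j(H)\ge n_j(\BG)$ for all $j$, and the criterion gives $\BG\hookrightarrow H$.

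I expect the cyclicity of $N/M$ to be the only real obstacle. The lift of a generator of $\Gamma/H$ resolves it, and everything else is standard bookkeeping with invariant factors.
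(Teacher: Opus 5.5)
Your proof is correct. It differs from the paper's treatment, which gives no argument of its own and instead imports the lemma as the invariant-factor interlacing from the proof of Domokos's Lemma~3.2, with pointers to Macdonald's Hall-polynomial theory and Pieri's horizontal-strip rule.

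In that route, let $e_1\mid\cdots\mid e_r$ be the invariant factors of $H$, padded with $1$'s. If $\Gamma/H$ is cyclic, one shows $d_{i-1}\mid e_i\mid d_i$ for all $i$. Equivalently, on each $p$-primary part the Young diagram of $H$ is obtained from that of $\Gamma$ by removing a horizontal strip. The half $d_{i-1}\mid e_i$ is exactly the embedding $\BG\hookrightarrow H$.

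Your inequality $n_j(H)\ge n_j(\Gamma)-1$ is precisely the conjugate-partition form of that half. You derive it from scratch via $p^{j-1}\Gamma=p^{j-1}H+\langle p^{j-1}g\rangle$ and a count of generators.

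The two routes buy different things. The cited route gives the full characterization of subgroup types with cyclic quotient, but it rests on substantial machinery that the paper leaves to the references. Yours is short and self-contained, and it proves exactly what is needed. It also extends verbatim: if $\Gamma/H$ is generated by $s$ elements, then $d(p^{j-1}\Gamma)\le d(p^{j-1}H)+s$. Hence the group obtained by deleting the $s$ largest invariant factors of $\Gamma$ embeds in $H$.

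Two small presentational points. First, the paragraph on $p^{j-1}\Gamma\cap H$ is a dead end and can be cut, since lifting a generator of $\Gamma/H$ settles the cyclicity of $N/M$ directly. Second, in the ``if'' direction of your embedding criterion, state explicitly the standard translation you use. Namely, $n_j(K)\le n_j(L)$ for all $j$ is equivalent to the $i$-th largest invariant factor of $K$ dividing the $i$-th largest invariant factor of $L$ for all $i$. That is what the factor-by-factor matching actually requires.
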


The next lemma describes how ordinary generation differs from affine generation.

\begin{lemma}\label{lem:generating-affine}
Let $G$ be regular and let $\Gamma$ be a finite abelian group. The graph $G$ admits a generating $\Gamma$-magic map if and only if there exists a subgroup $H\le\Gamma$ such that $\Gamma/H$ is cyclic and $G$ admits an affinely generating $H$-magic map.
\end{lemma}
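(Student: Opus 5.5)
We prove the two implications separately. In both directions the key tool is Proposition~\ref{prop:affine-basic}, which relates generation to affine generation through translation.

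For the forward direction, let $f\colon V(G)\to\Gamma$ be a generating $\Gamma$-magic map and put $H=D(f)$. By Proposition~\ref{prop:affine-basic}\emph{(b)}, the image of $f$ lies in a single coset $a+H$, where we may take $a=f(v_0)$ for any fixed $v_0\in V(G)$. Since $f$ is generating, $\Gamma=\langle f(V(G))\rangle$ is contained in $\langle a\rangle+H$. Hence $\Gamma/H$ is generated by the class of $a$, and so it is cyclic. Next consider the translate $g(v)=f(v)-f(v_0)$. By Proposition~\ref{prop:affine-basic}\emph{(c)}, $g$ is a $\Gamma$-magic map with $D(g)=D(f)=H$. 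Every value of $g$ is a label difference of $f$, so $g$ takes values in $H$. Its neighborhood sums are constant in $\Gamma$ and lie in $H$, so the constant is an element of $H$. Therefore $g$ is an $H$-magic map, and its affine difference group computed inside $H$ is still $H$. Thus $g$ is an affinely generating $H$-magic map.

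For the converse, suppose $H\le\Gamma$, $\Gamma/H=\langle a+H\rangle$ is cyclic, and $h\colon V(G)\to H$ is an affinely generating $H$-magic map. Define $f(v)=h(v)+a$, viewed in $\Gamma$. Composing $h$ with the inclusion $H\le\Gamma$ gives a $\Gamma$-magic map, and by regularity (Proposition~\ref{prop:affine-basic}\emph{(c)}) the translate $f$ is magic with magic constant $\gamma+ka$. It remains to check that $f$ is generating. The subgroup $\langle f(V(G))\rangle$ contains every difference $f(v)-f(w)=h(v)-h(w)$, so it contains $D(h)=H$. It also contains $f(v_0)=h(v_0)+a$, and $h(v_0)\in H$, so it contains $a$. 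Therefore it contains $H+\langle a\rangle=\Gamma$, and $f$ is generating.

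No step here is a genuine obstacle. The only point needing care is the bookkeeping in the forward direction: one must confirm that the translate $g$ is magic as an $H$-valued map, that is, that its magic constant lies in $H$, and that its affine difference group, viewed inside $H$, is all of $H$. Both facts follow because all values of $g$ lie in $H$ and differences are unchanged by translation.
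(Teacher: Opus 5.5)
Your proof is correct and follows essentially the same route as the paper. In the forward direction you take $H=D(f)$, translate by $-f(v_0)$ to get an affinely generating $H$-magic map, and read off cyclicity of $\Gamma/H$ from the coset $f(v_0)+H$; in the converse you translate by a lift $a$ of a generator of $\Gamma/H$ and show the label subgroup contains both $D(h)=H$ and $a$ (the paper phrases this last step via the image in $\Gamma/H$).
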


\begin{proof}
Suppose first that $f\colon V(G)\to\Gamma$ is generating and magic, and put $H=D(f)$. Fix $v_0\in V(G)$ and define
\[
  g(v)=f(v)-f(v_0).
\]
Regularity implies that $g$ is magic. Its values lie in $H$, and $D(g)=H$, so $g$ is an affinely generating $H$-magic map. Since every label of $f$ lies in the coset $f(v_0)+H$ and the labels generate $\Gamma$, the quotient $\Gamma/H$ is generated by $f(v_0)+H$ and is therefore cyclic.

Conversely, suppose that $H\le\Gamma$, that $\Gamma/H$ is cyclic, and that $g\colon V(G)\to H$ is affinely generating and magic. Choose $a\in\Gamma$ such that $a+H$ generates $\Gamma/H$, and define
\[
  f(v)=g(v)+a.
\]
Regularity implies that $f$ is a $\Gamma$-magic map. The subgroup generated by its labels contains all pairwise differences, hence contains $D(f)=D(g)=H$. Its image in $\Gamma/H$ contains $a+H$, so it is all of $\Gamma/H$. Therefore the labels of $f$ generate $\Gamma$.
\end{proof}

The following lemma is the main algebraic bridge of the paper, translating the existence of affinely generating $H$-magic maps into the existence of injective homomorphisms from the character group $H^*$ into the modular kernel.

\begin{lemma}\label{lem:affine-param}
Let $G$ be a regular graph, let $H$ be a finite abelian group, and let $m\ge1$. Put
\[
  H^*=\operatorname{Hom}(H,\Z/m\Z).
\]
The following statements hold.
\begin{enumerate}[label=\emph{(\alph*)},leftmargin=2.2em]
\item If $f\colon V(G)\to H$ is an affinely generating $H$-magic map, then
\[
  \Psi_f\colon H^*\longrightarrow\ker\overline A_m,
  \qquad
  \Psi_f(\chi)=[\chi\circ f],
\]
is an injective homomorphism. In particular, the conclusion applies to every $H$-distance magic labeling.
\item Suppose that $m$ is divisible by $\ex(H)$, and fix $v_0\in V(G)$. Then $f\mapsto\Psi_f$ is a bijection between the following sets.
\begin{enumerate}[label=\emph{(\roman*)},leftmargin=2.3em]
\item The generating $H$-magic maps $f$ satisfying $f(v_0)=0$.
\item The injective homomorphisms
\[
  \Psi\colon H^*\hookrightarrow\ker\overline A_m.
\]
\end{enumerate}
\end{enumerate}
\end{lemma}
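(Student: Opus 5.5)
The approach is to unwind the definitions directly. For part~\emph{(a)}, let $f$ be an affinely generating $H$-magic map with magic constant $\gamma$, and let $\chi\in H^*$. Since $\chi$ is a homomorphism,
\[
  \sum_{u\in N_G(v)}\chi(f(u))=\chi(\gamma)\qquad\text{for every }v,
\]
so $A_m(\chi\circ f)=\chi(\gamma)\mathbf1$. Hence $[\chi\circ f]\in\ker\overline A_m$. The map $\chi\mapsto\chi\circ f$ is additive in $\chi$, so $\Psi_f$ is a homomorphism.

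For injectivity, suppose $[\chi\circ f]=0$. Then $\chi\circ f$ is constant, so $\chi(f(v)-f(w))=0$ for all $v,w$. Thus $\chi$ vanishes on $D(f)=H$, and $\chi=0$. This uses no hypothesis on $m$. The distance magic case then follows from Proposition~\ref{prop:affine-basic}\emph{(e)}.

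For part~\emph{(b)}, the first step is that the map is well defined. If $f$ is generating with $f(v_0)=0$, then it is affinely generating by Proposition~\ref{prop:affine-basic}\emph{(f)}, and part~\emph{(a)} gives the injective homomorphism $\Psi_f$.

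For injectivity of $f\mapsto\Psi_f$, suppose $\Psi_f=\Psi_g$. Then $\chi\circ f-\chi\circ g$ is constant for every $\chi$. Evaluating at $v_0$, where $f(v_0)=g(v_0)=0$, shows the constant is $0$. Hence $\chi(f(v))=\chi(g(v))$ for all $\chi\in H^*$. Since $\ex(H)\mid m$, Lemma~\ref{lem:duality} says $\operatorname{ev}_H$ is injective, so characters separate points and $f=g$.

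For surjectivity, start from an injective $\Psi\colon H^*\hookrightarrow\ker\overline A_m$. For each $\chi$, choose the unique representative $x^\chi\in(\Z/m\Z)^{V(G)}$ of $\Psi(\chi)$ with $x^\chi_{v_0}=0$. This normalization is exactly what makes $\chi\mapsto x^\chi$ a homomorphism into $(\Z/m\Z)^{V(G)}$, not merely into the quotient. For each vertex $v$, the map $\chi\mapsto x^\chi_v$ then lies in $\operatorname{Hom}(H^*,\Z/m\Z)$. By Lemma~\ref{lem:duality}, it equals $\operatorname{ev}_H(h_v)$ for a unique $h_v\in H$. Set $f(v)=h_v$. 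Then $f(v_0)=0$, and $\chi\circ f=x^\chi$ by construction, so $\Psi_f=\Psi$ once $f$ is shown to be a generating magic map.

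The main obstacle is proving that $f$ is magic and generating. I would handle it through duality in three steps.

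\begin{enumerate}
\item Since $[x^\chi]\in\ker\overline A_m$, we have $A_mx^\chi=c(\chi)\mathbf1$ for some $c(\chi)\in\Z/m\Z$. Evaluating at a fixed vertex shows $c$ is a homomorphism $H^*\to\Z/m\Z$.
\item By Lemma~\ref{lem:duality}, $c=\operatorname{ev}_H(\gamma)$ for a unique $\gamma\in H$.
\item For each $v$, the element $s_v=\sum_{u\in N_G(v)}f(u)$ satisfies $\chi(s_v)=(A_mx^\chi)_v=\chi(\gamma)$ for every $\chi$. Separation by characters gives $s_v=\gamma$, so $f$ is magic.
\end{enumerate}

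For generation, suppose $K=\langle f(V(G))\rangle$ is a proper subgroup of $H$. Lemma~\ref{lem:duality} then supplies a nonzero $\chi\in H^*$ with $\chi|_K=0$. For this $\chi$, $x^\chi=\chi\circ f=0$, so $\Psi(\chi)=0$, contradicting the injectivity of $\Psi$. Hence $f$ is generating, which completes the bijection.
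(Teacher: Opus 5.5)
Your proposal is correct and takes essentially the same approach as the paper. Part (a) composes characters with $f$ in the same way, and the surjectivity construction uses the same $v_0$-normalized representatives and evaluation duality; the injectivity and generation steps use the same character-separation arguments. The only cosmetic difference is in proving that $f$ is magic: you first obtain the magic constant $\gamma$ from the surjectivity of $\operatorname{ev}_H$, whereas the paper shows $\chi(s_v-s_w)=0$ for every $\chi\in H^*$ and concludes $s_v=s_w$ directly by separation.
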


\begin{proof}
For part~\emph{(a)}, let $f$ be an affinely generating $H$-magic map with magic constant $\gamma$. For every $\chi\in H^*$,
\[
  A_m(\chi\circ f)=\chi(\gamma)\mathbf1,
\]
so $[\chi\circ f]\in\ker\overline A_m$. Thus $\Psi_f$ is a homomorphism. If $\Psi_f(\chi)=0$, then $\chi\circ f$ is constant, so $\chi$ vanishes on every label difference. Since $D(f)=H$, one has $\chi=0$. Hence $\Psi_f$ is injective.

For part~\emph{(b)}, let $f$ be a generating $H$-magic map satisfying $f(v_0)=0$. By Proposition~\ref{prop:affine-basic}\emph{(f)}, the map $f$ is affinely generating, so part~\emph{(a)} shows that $\Psi_f$ is injective.

Conversely, let
\[
  \Psi\colon H^*\hookrightarrow\ker\overline A_m
\]
be injective. For each $\chi\in H^*$, let $x^\chi$ be the unique representative of $\Psi(\chi)$ whose $v_0$-coordinate is zero. Such a representative is obtained from any representative $x$ by replacing it with $x-x_{v_0}\mathbf1$. Since $\Psi$ is a homomorphism and normalization at $v_0$ is additive,
\[
  x^{\chi+\psi}=x^\chi+x^\psi
  \qquad
  (\chi,\psi\in H^*).
\]
For every vertex $v$, define
\[
  \eta_v\colon H^*\longrightarrow\Z/m\Z,
  \qquad
  \eta_v(\chi)=x^\chi_v.
\]
The preceding equality shows that $\eta_v$ is a homomorphism. Since $m$ is divisible by $\ex(H)$, Lemma~\ref{lem:duality} identifies $H$ with $\operatorname{Hom}(H^*,\Z/m\Z)$ through evaluation. Consequently, for every $v\in V(G)$ there is a unique element $f(v)\in H$ such that
\[
  \chi(f(v))=\eta_v(\chi)=x^\chi_v
  \qquad
  \text{for every }\chi\in H^*.
\]
This defines a map $f\colon V(G)\to H$. Since $x^\chi_{v_0}=0$ for every $\chi\in H^*$ and the characters in $H^*$ separate points of $H$, one has $f(v_0)=0$. The defining equality also gives
\[
  \chi\circ f=x^\chi
  \qquad\text{and hence}\qquad
  [\chi\circ f]=\Psi(\chi)
  \quad
  \text{for every }\chi\in H^*.
\]

For every $\chi\in H^*$, the inclusion $\Psi(\chi)\in\ker\overline A_m$ gives $[A_mx^\chi]=0$, so $A_mx^\chi$ is constant. Moreover, for each vertex $v$,
\[
  (A_mx^\chi)_v
  =\sum_{u\in N_G(v)}\chi(f(u))
  =\chi\left(\sum_{u\in N_G(v)}f(u)\right).
\]
It follows that, for all vertices $v,w$,
\[
  \chi\left(
    \sum_{u\in N_G(v)}f(u)
    -\sum_{u\in N_G(w)}f(u)
  \right)=0
  \qquad
  \text{for every }\chi\in H^*.
\]
Lemma~\ref{lem:duality} shows that the elements of $H^*$ separate points of $H$. Hence the two neighborhood sums are equal, and $f$ is magic.

If $\langle f(V(G))\rangle<H$, Lemma~\ref{lem:duality} gives a nonzero $\chi\in H^*$ vanishing on $\langle f(V(G))\rangle$. Then $\chi\circ f$ is identically zero, so $\Psi(\chi)=0$, contradicting the injectivity of $\Psi$. Therefore $\langle f(V(G))\rangle=H$, and $f$ is generating. This proves that $f\mapsto\Psi_f$ is surjective.

It remains to prove that $f\mapsto\Psi_f$ is injective. Suppose that $f,g\colon V(G)\to H$ are generating $H$-magic maps satisfying $f(v_0)=g(v_0)=0$ and $\Psi_f=\Psi_g$. Then, for every $\chi\in H^*$, the maps $\chi\circ f$ and $\chi\circ g$ differ by a constant. Since both vanish at $v_0$, they are equal. Thus
\[
  \chi(f(v)-g(v))=0
\]
for every $v\in V(G)$ and every $\chi\in H^*$. Lemma~\ref{lem:duality} gives $f=g$, so $f\mapsto\Psi_f$ is injective.
\end{proof}

\begin{proof}[\normalfont\bfseries Proof of Theorem~\ref{thm:main-characterization}]
For part~\emph{(a)}, suppose first that $G$ admits a generating $\Gamma$-magic map. By Lemma~\ref{lem:generating-affine}, there exists $H\le\Gamma$ such that $\Gamma/H$ is cyclic and $G$ admits an affinely generating $H$-magic map. By Proposition~\ref{prop:affine-basic}\emph{(f)}, there is a generating $H$-magic map $g$ with $g(v_0)=0$ for some $v_0\in V(G)$. Since $\ex(H)\mid d_r$, Lemma~\ref{lem:affine-param}\emph{(b)} with $m=d_r$ gives
\[
  \operatorname{Hom}(H,\Z/d_r\Z)
  \hookrightarrow
  \ker\overline A_{d_r}.
\]
Lemma~\ref{lem:duality} identifies the domain with $H$, while Lemma~\ref{lem:cyclic-quotient} gives $\BG\hookrightarrow H$. Hence
\[
  \BG\hookrightarrow H\hookrightarrow\ker\overline A_{d_r}.
\]

Conversely, suppose
\[
  \BG\hookrightarrow\ker\overline A_{d_r}.
\]
Since $\ex(\BG)\mid d_r$, Lemma~\ref{lem:duality} identifies $\operatorname{Hom}(\BG,\Z/d_r\Z)$ with $\BG$. Lemma~\ref{lem:affine-param}\emph{(b)} gives a generating $\BG$-magic map $g$ satisfying $g(v_0)=0$ for a fixed vertex $v_0$. Proposition~\ref{prop:affine-basic}\emph{(f)} shows that $g$ is affinely generating. Under the fixed invariant-factor decomposition, regard $\BG$ as the corresponding subgroup of $\Gamma$. Then $\Gamma/\BG\cong\Z/d_r\Z$. Lemma~\ref{lem:generating-affine} now gives a generating $\Gamma$-magic map.

For part~\emph{(b)}, Proposition~\ref{prop:affine-basic}\emph{(f)} shows that $G$ admits an affinely generating $\Gamma$-magic map if and only if, for a fixed vertex $v_0$, it admits a generating $\Gamma$-magic map $f$ with $f(v_0)=0$. By Lemma~\ref{lem:affine-param}\emph{(b)} with $m=d_r$, this is equivalent to
\[
  \operatorname{Hom}(\Gamma,\Z/d_r\Z)
  \hookrightarrow
  \ker\overline A_{d_r}.
\]
Since $d_r=\ex(\Gamma)$, Lemma~\ref{lem:duality} identifies the domain with $\Gamma$, proving part~\emph{(b)}.

For part~\emph{(c)}, let $\ell$ be a $\Gamma$-distance magic labeling. Choose $v_0\in V(G)$ with $\ell(v_0)=0$. Since $\ell$ is bijective, it is generating, and Lemma~\ref{lem:affine-param}\emph{(b)} gives an embedding
\[
  \Psi_\ell\colon
  \operatorname{Hom}(\Gamma,\Z/d_r\Z)
  \hookrightarrow
  \ker\overline A_{d_r},
  \qquad
  \Psi_\ell(\chi)=[\chi\circ\ell].
\]
Its image $W$ is isomorphic to $\Gamma$. If $v\ne w$, then $\ell(v)-\ell(w)\ne0$. Lemma~\ref{lem:duality} gives $\chi\in\operatorname{Hom}(\Gamma,\Z/d_r\Z)$ such that
\[
  \chi(\ell(v)-\ell(w))\ne0.
\]
Thus the class $[\chi\circ\ell]\in W$ separates $v$ and $w$, so $W$ is vertex-separating.

Conversely, suppose that $W\le\ker\overline A_{d_r}$ is vertex-separating and isomorphic to $\Gamma$. By Lemma~\ref{lem:duality}, choose an isomorphism
\[
  \Psi\colon
  \operatorname{Hom}(\Gamma,\Z/d_r\Z)
  \longrightarrow W.
\]
Fix $v_0\in V(G)$. Lemma~\ref{lem:affine-param}\emph{(b)} gives a generating $\Gamma$-magic map $f$ with $f(v_0)=0$ and
\[
  [\chi\circ f]=\Psi(\chi)
  \qquad
  \text{for every }\chi\in\operatorname{Hom}(\Gamma,\Z/d_r\Z).
\]
If $f(v)=f(w)$ for distinct vertices, then
\[
  x_v-x_w=0
\]
for every $[x]\in W$, where $x$ is any representative of $[x]$. This contradicts vertex separation. Thus $f$ is injective. Since $|V(G)|=|\Gamma|$, it is bijective and hence is a $\Gamma$-distance magic labeling.
\end{proof}

%======================================================================
\section{The reduced Smith formulation}\label{sec:smith}

For an abelian group $C$, write
\[
  C[m]=\{c\in C\mid mc=0\}
\]
for its $m$-torsion subgroup.
For a free abelian group $L$, write $\operatorname{End}_{\Z}(L)$ for the ring of $\Z$-linear endomorphisms of $L$.

\begin{lemma}\label{lem:modular-kernel}
Let $L$ be a free abelian group of finite rank and let $T\in\operatorname{End}_{\Z}(L)$ be injective. Put $C=\operatorname{coker}T$. For every positive integer $m$, if $T_m$ is the induced endomorphism of $L/mL$, then
\[
  \ker T_m\cong C[m].
\]
\end{lemma}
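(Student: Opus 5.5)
The approach is the snake lemma applied to multiplication by $m$ on the short exact sequence
\[
  0\longrightarrow L\xrightarrow{\;T\;}L\longrightarrow C\longrightarrow0,
\]
which is exact on the left because $T$ is injective. Consider the commutative diagram with two copies of this sequence as rows and multiplication by $m$ as the vertical maps; it commutes because $T$ is $\Z$-linear.

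On the left and middle columns, multiplication by $m$ on the free group $L$ is injective, so the kernels vanish and the cokernels are both $L/mL$. On the right column the kernel is $C[m]$ and the cokernel is $C/mC$.

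The snake lemma then gives the exact sequence
\[
  0\longrightarrow 0\longrightarrow 0\longrightarrow C[m]\xrightarrow{\;\delta\;}L/mL\xrightarrow{\;T_m\;}L/mL\longrightarrow C/mC\longrightarrow0.
\]
Here the map between the two copies of $L/mL$ is the one induced by $T$ on cokernels, which is exactly $T_m$. Exactness at $L/mL$ shows that $\delta$ is injective with image $\ker T_m$, so $\ker T_m\cong C[m]$.

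The only step needing care is the explicit description of $\delta$, which I would record to make the isomorphism concrete. Given $c\in C[m]$, lift it to $y\in L$. Then $my$ maps to $0$ in $C$, so $my=Tx$ for a unique $x\in L$, and we set $\delta(c)=x+mL$.

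From this description $\delta$ can be checked directly without invoking the snake lemma. Well-definedness: changing the lift $y$ by $Tz$ changes $x$ by $mz$. Landing in $\ker T_m$: $Tx=my\in mL$. Injectivity: if $x=mx'$, then injectivity of $T$ after cancelling $m$ (valid since $L$ is torsion-free) gives $y=Tx'$, so $c=0$. Surjectivity onto $\ker T_m$: if $Tx=my$, then $y$ maps to an $m$-torsion element of $C$.

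Note that torsion-freeness of $L$ and injectivity of $T$ are the hypotheses genuinely used; finiteness of rank is not needed for this argument.
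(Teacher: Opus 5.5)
Your proof is correct, but it takes a different route from the paper's.

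The paper puts $T$ in Smith normal form, $T(e_i)=s_if_i$, and works coordinatewise. It shows that $\ker T_m$ is the direct sum of the kernels of multiplication by $s_i$ on $\Z/m\Z$. It then observes that each of these kernels, and each $(\Z/s_i\Z)[m]$, is cyclic of order $\gcd(m,s_i)$. That computation needs finite rank so that Smith normal form exists. Its isomorphism depends on the chosen bases and generators. As a byproduct it gives the explicit structure $\ker T_m\cong\bigoplus_i\Z/\gcd(m,s_i)\Z$ in terms of the invariant factors of $T$, which fits the paper's Smith-group viewpoint.

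Your snake-lemma argument is basis-free. Equivalently, $\ker T_m$ computes $\operatorname{Tor}_1^{\Z}(C,\Z/m\Z)\cong C[m]$ from the free resolution $0\to L\to L\to C\to 0$. Your route yields a canonical isomorphism $c\mapsto x+mL$, where $my=Tx$. As you say, it works for any torsion-free $L$ and injective $T$. The paper's application uses only the abstract isomorphism type of $\ker\overline A_{d_r}$, so either proof suffices there.

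One small slip in your direct check of injectivity. From $my=Tx=T(mx')=mTx'$ you obtain $y=Tx'$ by torsion-freeness of $L$ alone; injectivity of $T$ plays no role at that step. Injectivity of $T$ is used for the uniqueness of $x$ given $y$, which makes $\delta$ well defined, and for exactness of the bottom row on the left in the snake diagram.
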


\begin{proof}
Let $t=\operatorname{rank}L$. Choose bases $e_1,\ldots,e_t$ and $f_1,\ldots,f_t$ of $L$ that put $T$ in Smith normal form, so that
\[
  T(e_i)=s_i f_i
  \qquad(1\le i\le t),
\]
where $s_1,\ldots,s_t$ are positive integers satisfying $s_1\mid\cdots\mid s_t$. Then
\[
  C\cong\bigoplus_{i=1}^t\Z/s_i\Z.
\]
For $1\le i\le t$, let
\[
  S_i\colon\Z/m\Z\longrightarrow\Z/m\Z,
  \qquad
  S_i([a])=[s_i a],
\]
be multiplication by $s_i$ modulo $m$.
Write $x=\sum_{i=1}^t x_i e_i$. The class of $x$ lies in $\ker T_m$ if and only if $Tx\in mL$. In the basis $f_1,\ldots,f_t$, this is equivalent to
\[
  m\mid s_i x_i
  \qquad(1\le i\le t).
\]
Thus
\[
  \ker T_m
  \cong
  \bigoplus_{i=1}^t
  \ker S_i.
\]
Fix $i$ and put $g_i=\gcd(m,s_i)$, $m=g_i m_i$, and $s_i=g_i s_i'$. Since $\gcd(m_i,s_i')=1$, one has
\[
  [a]\in\ker S_i
  \quad\Longleftrightarrow\quad
  m\mid s_i a
  \quad\Longleftrightarrow\quad
  m_i\mid a.
\]
Consequently,
\[
  \ker S_i=\langle[m_i]\rangle
\]
is cyclic of order $g_i$. On the other hand, write $[b]_{s_i}$ for the residue class of an integer $b$ modulo $s_i$. Then
\[
  (\Z/s_i\Z)[m]
  =\{[b]_{s_i}\mid s_i\mid mb\}
  =\langle[s_i']_{s_i}\rangle,
\]
which is also cyclic of order $g_i$. The assignment $[m_i]\mapsto[s_i']_{s_i}$ therefore extends to an isomorphism
\[
  \ker S_i\cong(\Z/s_i\Z)[m].
\]
Applying these isomorphisms to all coordinates gives
\[
  \ker T_m
  \cong
  \bigoplus_{i=1}^t(\Z/s_i\Z)[m]
  \cong C[m].
\]
\end{proof}

\begin{proposition}\label{prop:smith-exact}
Let $G$ be a $k$-regular graph with $k>0$. If $\overline A$ is nonsingular over $\mathbb Q$, then $A$ is nonsingular over $\mathbb Q$. If $[\mathbf1]_A$ denotes the class of $\mathbf1$ in $\operatorname{coker}A$, then
\[
  \Sred(G)\cong
  \frac{\operatorname{coker}A}{\langle[\mathbf1]_A\rangle},
  \qquad
  \langle[\mathbf1]_A\rangle\cong\Z/k\Z.
\]
In particular,
\[
  |\Sred(G)|=\frac{|\det A|}{k}.
\]
\end{proposition}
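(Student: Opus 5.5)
The plan is to compare $A$ on $\Z^{V(G)}$ with $\overline A$ on $\Lamone=\Z^{V(G)}/\Z\mathbf1$, using the $A$-invariant sublattice $\Z\mathbf1$, on which $A$ acts as multiplication by $k$.

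First, for nonsingularity over $\mathbb Q$, I tensor with $\mathbb Q$. The short exact sequence $0\to\mathbb Q\mathbf1\to\mathbb Q^{V(G)}\to\mathbb Q^{V(G)}/\mathbb Q\mathbf1\to0$ is $A$-equivariant. Choosing a basis that starts with $\mathbf1$ makes $A$ block upper triangular, with diagonal blocks $(k)$ and the matrix of $\overline A$. Hence
\[
\det A=k\det\overline A,
\]
so $\det A\neq0$ because $k>0$ and $\det\overline A\ne0$. The same basis works over $\Z$: extend $\mathbf1$ to a $\Z$-basis of $\Z^{V(G)}$, which is possible since $\mathbf1$ is primitive. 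Then the identity holds over $\Z$, and $|\det A|=k\,|\det\overline A|$.

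Second, to identify $\Sred(G)$, I use the natural surjection $\pi\colon\Z^{V(G)}\to\Lamone$. Composing with the quotient map gives a surjection
\[
\Z^{V(G)}\to\Lamone/\overline A(\Lamone)=\Sred(G).
\]
Its kernel is $\pi^{-1}(\overline A(\Lamone))=A\Z^{V(G)}+\Z\mathbf1$. Therefore
\[
\Sred(G)\cong\Z^{V(G)}/(A\Z^{V(G)}+\Z\mathbf1)\cong\operatorname{coker}A/\langle[\mathbf1]_A\rangle.
\]

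Third, I compute the order of $[\mathbf1]_A$. Since $A\mathbf1=k\mathbf1$, we have $k[\mathbf1]_A=0$. Conversely, suppose $j\mathbf1=Ay$ with $y\in\Z^{V(G)}$. Over $\mathbb Q$, $A$ is invertible and $A(j\mathbf1/k)=j\mathbf1$, so $y=(j/k)\mathbf1$. Integrality of $y$ then forces $k\mid j$. Hence $\langle[\mathbf1]_A\rangle\cong\Z/k\Z$.

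Finally, $|\operatorname{coker}A|=|\det A|$ by Smith normal form, so
\[
|\Sred(G)|=\frac{|\det A|}{k}.
\]
This also matches $|\det\overline A|$ from the first step, which gives a consistency check. The only mildly delicate point is the order computation for $[\mathbf1]_A$, and it relies on the invertibility of $A$ over $\mathbb Q$ that was established first.
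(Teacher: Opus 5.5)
Your proof is correct and follows the paper's argument for the isomorphism $\Sred(G)\cong\operatorname{coker}A/\langle[\mathbf1]_A\rangle$, for the order of $[\mathbf1]_A$ (via $A^{-1}\mathbf1=k^{-1}\mathbf1$ over $\mathbb Q$), and for the Smith-normal-form count. The only difference is the nonsingularity step: you use the block-triangular factorization $\det A=k\det\overline A$ in a $\Z$-basis extending the primitive vector $\mathbf1$, whereas the paper argues by injectivity of $A$ on $\Z^{V(G)}$. Your version has the small bonus of giving $|\Sred(G)|=|\det\overline A|=|\det A|/k$ directly, without using the order of $[\mathbf1]_A$.
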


\begin{proof}
We first prove that $A$ is nonsingular. Since $\overline A$ is nonsingular over $\mathbb Q$, it is injective on the free abelian group $\Lamone$. If $Ax=0$ for $x\in\Z^{V(G)}$, then $\overline A[x]=0$, so $[x]=0$ and therefore $x=t\mathbf1$ for some $t\in\Z$. Since
\[
  0=Ax=tk\mathbf1
\]
and $k>0$, one has $t=0$. Thus $A$ is injective on $\Z^{V(G)}$.

If $A$ were singular over $\mathbb Q$, there would be a nonzero vector $z\in\mathbb Q^{V(G)}$ with $Az=0$. Multiplying $z$ by a common denominator of its coordinates would produce a nonzero vector in $\Z^{V(G)}$ annihilated by $A$, contradicting injectivity. Hence $A$ is nonsingular over $\mathbb Q$.

For $x\in\Z^{V(G)}$, write $[x]_A$ for its class in $\operatorname{coker}A$ and $[x]_{\mathbf1}$ for its class in $\Lamone$. The assignment
\[
  \Phi\colon\operatorname{coker}A\twoheadrightarrow\Sred(G),
  \qquad
  \Phi([x]_A)=[x]_{\mathbf1}+\overline A(\Lamone),
\]
defines a surjective homomorphism. It is well defined because $[Az]_{\mathbf1}=\overline A([z]_{\mathbf1})$ for every $z\in\Z^{V(G)}$, and it is surjective because every class in $\Sred(G)$ has a representative in $\Lamone$ of the form $[x]_{\mathbf1}$.

If $[x]_A\in\ker\Phi$, then
\[
  [x]_{\mathbf1}=\overline A([y]_{\mathbf1})=[Ay]_{\mathbf1}
\]
for some $y\in\Z^{V(G)}$. Hence
\[
  x-Ay=t\mathbf1
\]
for some $t\in\Z$, so $[x]_A=t[\mathbf1]_A$. Conversely, since $\mathbf1\in\Z\mathbf1$, its class in $\Lamone$ is zero. Therefore
\[
  \Phi([\mathbf1]_A)
  = [\mathbf1]_{\mathbf1}+\overline A(\Lamone)
  =0
  \qquad\text{in }\Sred(G),
\]
and hence $[\mathbf1]_A\in\ker\Phi$. Thus
\[
  \ker\Phi=\langle[\mathbf1]_A\rangle.
\]

Since $A\mathbf1=k\mathbf1$, this class has order dividing $k$. Conversely, if $t\mathbf1=Ay$ for some $y\in\Z^{V(G)}$, then, because $A$ is nonsingular and $A^{-1}\mathbf1=k^{-1}\mathbf1$ over $\mathbb Q$, one has
\[
  y=\frac{t}{k}\mathbf1.
\]
The integrality of $y$ implies $k\mid t$. Hence the class of $\mathbf1$ has order exactly $k$, and therefore
\[
  \ker\Phi\cong\Z/k\Z.
\]
The First Isomorphism Theorem now gives
\[
  \Sred(G)\cong
  \frac{\operatorname{coker}A}{\langle[\mathbf1]_A\rangle}.
\]

Finally, put $n=|V(G)|$. The Smith normal form theorem applied to the nonsingular integer matrix $A$ gives positive integers $a_1,\ldots,a_n$ such that
\[
  \operatorname{coker}A\cong
  \bigoplus_{i=1}^n\Z/a_i\Z
  \qquad\text{and}\qquad
  |\det A|=\prod_{i=1}^n a_i.
\]
Consequently,
\[
  |\operatorname{coker}A|=|\det A|,
\]
and the preceding quotient isomorphism together with $|\langle[\mathbf1]_A\rangle|=k$ yields
\[
  |\Sred(G)|=\frac{|\det A|}{k}.
\]
\end{proof}

\begin{proof}[\normalfont\bfseries Proof of Corollary~\ref{cor:main-smith}]
Let $d_r=\ex(\Gamma)$. There is a canonical identification
\[
  \Lamone/d_r\Lamone
  \cong
  \frac{(\Z/d_r\Z)^{V(G)}}{(\Z/d_r\Z)\mathbf1},
\]
under which $\overline A_{d_r}$ is the reduction of $\overline A$ on $\Lamone/d_r\Lamone$. Hence Lemma~\ref{lem:modular-kernel}, applied with $L=\Lamone$ and $T=\overline A$, gives
\[
  \ker\overline A_{d_r}\cong\Sred(G)[d_r].
\]
Since $d_rH=0$ for $H=\Gamma$ and $H=\BG$, one has
\[
  H\hookrightarrow\Sred(G)[d_r]
  \quad\Longleftrightarrow\quad
  H\hookrightarrow\Sred(G).
\]
Parts~\emph{(a)} and \emph{(b)} therefore follow from the corresponding parts of Theorem~\ref{thm:main-characterization}.

For the numerical consequences, Proposition~\ref{prop:smith-exact} gives
\[
  |\Sred(G)|=\frac{|\det A|}{k}.
\]
If $H$ is a subgroup of a finite abelian group $K$, then Lagrange's theorem gives $|H|\mid|K|$. Moreover, the order of every element of $H$ divides $\ex(K)$. Since $\ex(H)$ is the least common multiple of these element orders, it follows that $\ex(H)\mid\ex(K)$. Apply these facts to the embeddings of $\Gamma$ and $\BG$ into $\Sred(G)$. Since $\ex(\Gamma)=d_r$ and, for $r\ge2$,
\[
  \ex(\BG)=d_{r-1},
  \qquad
  |\BG|=\frac{|\Gamma|}{d_r},
\]
the stated divisibilities follow from the two subgroup embeddings.

Finally, if $G$ admits a $\Gamma$-distance magic labeling, then Theorem~\ref{thm:main-characterization}\emph{(c)} gives a vertex-separating copy of $\Gamma$ in $\ker\overline A_{d_r}$. Under the preceding identification this yields $\Gamma\hookrightarrow\Sred(G)$, and the numerical consequences are the same as in part~\emph{(b)}.
\end{proof}

\subsection{Examples of graphs which admit affinely generating magic maps but no $\Gamma$-distance magic labelings}\label{subsec:strict-affine}

The additional vertex-separation condition in Theorem~\ref{thm:main-characterization}\emph{(c)} is essential. We first give a disconnected example in which the graph and the labeling group have the same order.

Let $T_1,\ldots,T_5$ be pairwise vertex-disjoint triangles, let $C_{17}$ be a $17$-cycle disjoint from them, and put
\[
  G=C_{17}\mathbin{\sqcup}T_1\mathbin{\sqcup}\cdots\mathbin{\sqcup}T_5,
  \qquad
  \Gamma=(\Z/2\Z)^5.
\]
Let $e_1,\ldots,e_5$ be the standard basis of $\Gamma$, and define $f\colon V(G)\to\Gamma$ by
\[
  f(v)=
  \begin{cases}
    0,   & v\in V(C_{17}),\\
    e_i, & v\in V(T_i),\quad 1\le i\le5.
  \end{cases}
\]
The graph has order $17+5\cdot3=32=|\Gamma|$. The map $f$ is an affinely generating $\Gamma$-magic map with magic constant $0$, whereas $G$ admits no $\Gamma$-distance magic labeling. The verification of these assertions is left to the reader.

Let
\[
  \Gamma=(\Z/2\Z)^3.
\]
The following is a 2-connected example of a graph admitting an affinely generating $\Gamma$-magic map but no $\Gamma$-distance magic labeling. Let $e_1,e_2,e_3$ be the standard basis of $\Gamma$. The all-one vector in $\Gamma$ is
\[
  \mathbf1=e_1+e_2+e_3.
\]
For $x\in\{e_1,e_2,e_3\}$, write $\bar x=x+\mathbf1$. Consider the graph and the $\Gamma$-valued labeling displayed in Figure~\ref{fig:two-pentagons}.

\begin{figure}[ht]
\centering
\begin{tikzpicture}[
  vertex/.style={circle,fill=black,inner sep=2.4pt},
  every label/.style={font=\LARGE}
]
  \node[vertex,label=above:{$\bar e_1$}] (u) at (0,1.25) {};
  \node[vertex,label=below:{$\bar e_1$}] (v) at (0,-1.25) {};

  \node[vertex,label=above:{$e_2$}] (lu) at (-1.75,2.05) {};
  \node[vertex,label=left:{$e_1$}] (lm) at (-3,0) {};
  \node[vertex,label=below left:{$\bar e_2$}] (ll) at (-1.75,-2.05) {};

  \node[vertex,label=above:{$\bar e_3$}] (ru) at (1.75,2.05) {};
  \node[vertex,label=right:{$e_1$}] (rm) at (3,0) {};
  \node[vertex,label=below right:{$e_3$}] (rl) at (1.75,-2.05) {};

  \draw[thick]
    (u)--(lu)--(lm)--(ll)--(v)--(u)
    (u)--(ru)--(rm)--(rl)--(v);
\end{tikzpicture}
\caption{A $\Gamma$-magic map on two $5$-cycles sharing an edge.}
\label{fig:two-pentagons}
\end{figure}

The graph has order $8=|\Gamma|$ and vertex-connectivity $2$. The displayed labeling is an affinely generating $\Gamma$-magic map with magic constant $\mathbf1$, whereas the graph admits no $\Gamma$-distance magic labeling. The verification of these assertions is left to the reader.

%======================================================================
\section{A counterexample and the generating group distance magic conjecture}\label{sec:weakened-conjecture}

The following proposition collects the modular criteria that will be used in this section.

\begin{proposition}\label{prop:group-distance-kernel-test}
Let $G$ be a regular graph of order $N$.
For a prime $p$, let $v_p(N)$ denote the $p$-adic valuation of $N$.
\begin{enumerate}[label=\emph{(\alph*)},leftmargin=2.2em]
\item Let $p$ be a prime, $r\ge2$, and $\Gamma=(\Z/p\Z)^r$. Then $G$ admits a generating $\Gamma$-magic map if and only if
\[
  \dim_{\F_p}\ker\overline A_p\ge r-1,
\]
and it admits an affinely generating $\Gamma$-magic map if and only if
\[
  \dim_{\F_p}\ker\overline A_p\ge r.
\]
\item If $G$ is group distance magic, then, for every prime $p\mid N$,
\[
  \dim_{\F_p}\ker\overline A_p\ge v_p(N).
\]
\end{enumerate}
\end{proposition}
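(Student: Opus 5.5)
Part~\emph{(a)} follows directly from Theorem~\ref{thm:main-characterization}. For $\Gamma=(\Z/p\Z)^r$ all invariant factors equal $p$, so $d_r=p$ and $\BG\cong(\Z/p\Z)^{r-1}$. The operator $\overline A_p$ is $\F_p$-linear on the $\F_p$-vector space $\F_p^{V(G)}/\F_p\mathbf1$, so $\ker\overline A_p$ is an elementary abelian $p$-group of some dimension $s$ over $\F_p$. An elementary abelian group $(\Z/p\Z)^t$ embeds in $(\Z/p\Z)^s$ if and only if $t\le s$. One direction is by choosing a $t$-dimensional subspace. For the other, an injective homomorphism is $\F_p$-linear, since every additive homomorphism between $\F_p$-spaces is. Applying this with $t=r-1$ in Theorem~\ref{thm:main-characterization}\emph{(a)} and with $t=r$ in Theorem~\ref{thm:main-characterization}\emph{(b)} gives the two equivalences. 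The hypothesis $r\ge2$ only ensures that $\BG$ is the nontrivial group $(\Z/p\Z)^{r-1}$ in the stated form. When $r-1=0$ the condition is vacuous anyway.

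For part~\emph{(b)}, let $p\mid N$ and put $e=v_p(N)\ge1$. Write $N=p^eM$ with $p\nmid M$, and take
\[
  \Gamma=(\Z/p\Z)^e\oplus\Z/M\Z,
\]
an abelian group of order $N$. Since $G$ is group distance magic, $G$ admits a $\Gamma$-distance magic labeling, and Proposition~\ref{prop:affine-basic}\emph{(e)} makes it an affinely generating $\Gamma$-magic map. Now push it to the Sylow quotient. The projection $\pi\colon\Gamma\to(\Z/p\Z)^e$ is a surjective homomorphism. If $f$ is magic, then $\pi\circ f$ is magic with constant $\pi(\gamma)$. Also $D(\pi\circ f)=\pi(D(f))=(\Z/p\Z)^e$, so $\pi\circ f$ is an affinely generating $(\Z/p\Z)^e$-magic map.

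To finish, apply Theorem~\ref{thm:main-characterization}\emph{(b)} to $(\Z/p\Z)^e$, whose exponent is $p$. This gives $(\Z/p\Z)^e\hookrightarrow\ker\overline A_p$, and by the dimension count from part~\emph{(a)} this means $\dim_{\F_p}\ker\overline A_p\ge e$. The case $e=1$ is covered as well, because Theorem~\ref{thm:main-characterization}\emph{(b)} applies to cyclic groups too.

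The main point needing care is the passage to the quotient: affine generation must be preserved under surjections. This holds because $\pi$ maps the set of pairwise label differences onto a generating set of the image. Alternatively, one could apply the theorem directly to $\Gamma$ with $m=\ex(\Gamma)=pM$ and then take $p$-primary parts of $\ker\overline A_{pM}$. The projection argument avoids comparing kernels modulo $pM$ and modulo $p$, so I would use it.
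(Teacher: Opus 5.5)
Your proof is correct and essentially the paper's. Part~(a) is the same dimension count fed into Theorem~\ref{thm:main-characterization}\emph{(a)} and \emph{(b)}. In part~(b) you pick the same kind of group $\Gamma$ of order $N$ with elementary abelian $p$-part and use that distance magic labelings are affinely generating.

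The only variation is how you obtain the embedding. You project onto $(\Z/p\Z)^e$ and invoke Theorem~\ref{thm:main-characterization}\emph{(b)}. The paper instead applies Lemma~\ref{lem:affine-param}\emph{(a)}, which needs no exponent hypothesis, with modulus $p$ directly to $\Gamma$, and gets $\operatorname{Hom}(\Gamma,\F_p)\hookrightarrow\ker\overline A_p$. This is the same embedding, since these characters are exactly those that factor through your projection.
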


\begin{proof}
For part~\emph{(a)}, one has $d_r=p$ and
\[
  \BG\cong(\Z/p\Z)^{r-1}.
\]
The group $\ker\overline A_p$ is an $\F_p$-vector space, and an embedding of $(\Z/p\Z)^s$ into this kernel exists exactly when its dimension is at least $s$. The two assertions therefore follow from parts~\emph{(a)} and \emph{(b)} of Theorem~\ref{thm:main-characterization}.

For part~\emph{(b)}, fix $p\mid N$ and put $a=v_p(N)$. Choose an abelian group $\Gamma$ of order $N$ whose $p$-primary component is $(\Z/p\Z)^a$. If $G$ is group distance magic, then it admits a $\Gamma$-distance magic labeling, which is affinely generating. Lemma~\ref{lem:affine-param}\emph{(a)}, applied with modulus $p$, gives an injection
\[
  \operatorname{Hom}(\Gamma,\F_p)
  \hookrightarrow
  \ker\overline A_p.
\]
The domain has dimension $a$, which proves the claimed inequality.
\end{proof}

Cichacz and Froncek conjectured that every distance magic graph is group distance magic~\cite[Conjecture~29]{CF16}; the conjecture was later restated in~\cite[Conjecture~1.7]{CDF20}. Theorem~\ref{thm:main-characterization} separates the bijectivity requirement from two successively weaker generation conditions. A first natural weakening of this conjecture is the assertion that every distance magic graph $G$ admits an affinely generating $\Gamma$-magic map for every abelian group $\Gamma$ of order $|V(G)|$. The following theorem shows that even this weaker assertion is false.

\begin{theorem}\label{thm:affine-counterexample}
There exists a $6$-regular distance magic graph $G$ of order $27$ such that
\[
  \dim_{\F_3}\ker\overline A_3=2.
\]
Consequently, for $\Gamma=(\Z/3\Z)^3$, the graph $G$ admits a generating $\Gamma$-magic map but no affinely generating $\Gamma$-magic map. In particular, $G$ is not $\Gamma$-distance magic and hence is not group distance magic.
\end{theorem}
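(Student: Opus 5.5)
The plan is to build $G$ as a circulant graph on $\Z/27\Z$, so that the $\F_3$-kernel computation becomes valuation arithmetic in a local ring. The consequences then follow from Proposition~\ref{prop:group-distance-kernel-test}\emph{(a)} with $p=3$ and $r=3$. Since $2\ge r-1$, a generating $(\Z/3\Z)^3$-magic map exists. Since $2<r$, no affinely generating one exists. Hence, by Proposition~\ref{prop:affine-basic}\emph{(e)}, there is no $(\Z/3\Z)^3$-distance magic labeling, and $G$ is not group distance magic. All the work is therefore in the construction and the kernel computation.

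For the construction, let $G=\operatorname{Cay}(\Z/27\Z,S)$ with $S=-S$, $0\notin S$ and $|S|=6$. Write $R=\F_3[x]/(x^{27}-1)=\F_3[x]/(x-1)^{27}$. This is a local ring with uniformizer $t=x-1$. Over $\F_3$, the adjacency operator $A_3$ is multiplication by $a(x)=\sum_{s\in S}x^s$ on $R$.

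Writing $a=t^e u$ with $u$ a unit, we get $\ker A_3=(t^{27-e})$, of dimension $e$, and $\operatorname{im}A_3=(t^e)$. The all-one vector corresponds to $\sum_i x^i=t^{26}$. Since $k=6\equiv0$, we have $A_3\mathbf1=0$, so $\ker\overline A_3=A_3^{-1}(\F_3\mathbf1)/\F_3\mathbf1$. For $e\le26$ we have $t^{26}\in(t^e)$, and therefore
\[
  \dim_{\F_3}\ker\overline A_3=(e+1)-1=e.
\]
So we need $e=2$, meaning $t^2\,\|\,a$. Two of the three conditions hold automatically: $a(1)=6\equiv0$, and $a'(1)=\sum_{s\in S}s\equiv0$ by the symmetry $s\leftrightarrow -s$ (the pairs sum to $27$). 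The remaining condition is that the $t^2$-coefficient be nonzero:
\[
  \sum_{s\in S}\binom{s}{2}\not\equiv0\pmod 3,
\]
taking representatives $0<s<27$.

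The main obstacle is choosing $S=\{\pm s_1,\pm s_2,\pm s_3\}$ so that the valuation condition holds and $G$ is also distance magic in the ordinary sense. The magic constant must then be $6\cdot 28/2=84$. I would first search among connection sets of the form $S=\{\pm a,\pm b,\pm c\}$, testing the binomial condition by hand. For each candidate I would look for a labeling by computer, for example by pairing vertices so that neighbourhoods split into complementary pairs with label sum $28$.

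I would then record the explicit bijection $\ell\colon\Z/27\Z\to\{1,\dots,27\}$ in a table and verify the $27$ neighbourhood sums directly. If no circulant works, the fallback is a Cayley graph on $\Z/9\Z\oplus\Z/3\Z$. There the same method applies with $R=\F_3[x,y]/\bigl((x-1)^9,(y-1)^3\bigr)$, but the kernel has to be computed as an explicit $\F_3$-rank rather than read off from a single valuation. For whichever graph is used, I would present the connection set, the labeling, and the rank computation of $A_3$ together with the check that $\mathbf1\in\operatorname{im}A_3$.
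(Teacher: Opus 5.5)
Your derivation of the consequences from Proposition~\ref{prop:group-distance-kernel-test}\emph{(a)} with $p=r=3$ matches the paper. Your formula $\dim_{\F_3}\ker\overline A_3=e$ for a circulant with $3\mid k$ is also correct.

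The gap is the construction: no distance magic circulant of order $27$ has $e=2$, so the search you describe must fail. Suppose $\ell$ is a distance magic labeling of $\operatorname{Cay}(\Z/27\Z,S)$. Then $y=\ell-14\mathbf1$ satisfies $Ay=0$ over $\mathbb Q$ and has $27$ distinct entries. The eigenvalues $a(\zeta^j)$ with $\zeta=e^{2\pi i/27}$ and $3\nmid j$ are Galois conjugate. If they were nonzero, $y$ would be a combination of characters of order dividing $9$. It would then be invariant under $x\mapsto x+9$, contradicting injectivity. So $a(\zeta)=0$, and $\Phi_{27}(x)=1+x^9+x^{18}$ divides $a(x)$ in $\Z[x]$. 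Since $\Phi_{27}(x)\equiv(1+x+x^2)^9=(x-1)^{18}\pmod 3$, you get $t^{18}\mid a$, so the nullity is at least $18$. In fact $S$ must be a union of two cosets of $\{0,9,18\}$, so $G$ is $C_9[\overline{K_3}]$ or $3K_{3,3,3}$, with nullity exactly $18$. Your binomial condition can be satisfied, e.g.\ by $S=\{\pm1,\pm3,\pm9\}$, but never together with distance magic.

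The fallback fails too. Let $P$ be an abelian $3$-group with $S=-S$, and let $\mathfrak m$ be the augmentation ideal of $R=\F_3[P]$. The identity $s+s^{-1}=2+s^{-1}(s-1)^2$ gives $\sigma_S\equiv k$ modulo $\mathfrak m^2$. With $3\mid k$ this puts $\sigma_S\in\mathfrak m^2$. Since $\mathbf1$ spans the socle of $R$, which lies in every nonzero ideal, the nullity equals $\dim_{\F_3}R/(\sigma_S)$. This is at least $\dim_{\F_3}R/\mathfrak m^2=1+\dim_{\F_3}P/3P$, which is $3$ for $\Z/9\Z\oplus\Z/3\Z$ (and $4$ for $(\Z/3\Z)^3$). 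So no $6$-regular Cayley graph on an abelian group of order $27$ can serve, and you need a graph without this structure.

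The paper fixes the labeling $\ell(i)=i$ and finds a $6$-regular graph by solving the $0$--$1$ constraints $\sum_j x_{ij}=6$ and $\sum_j jx_{ij}=84$. It then certifies nullity exactly $2$ with a $24\times24$ minor of the reduced matrix of determinant $-11$, together with two explicit independent null vectors.
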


\begin{proof}
Let $V(G)=\{1,2,\ldots,27\}$, and define $G$ by the following neighborhoods.
\begingroup
\scriptsize
\[
\renewcommand{\arraystretch}{1.15}
\begin{array}{c|l@{\quad}c|l@{\quad}c|l}
\toprule
v & N_G(v) & v & N_G(v) & v & N_G(v)\\
\midrule
1 & \{7,9,11,14,18,25\} & 10 & \{3,5,12,13,25,26\} & 19 & \{3,4,7,21,22,27\}\\
2 & \{8,11,12,13,17,23\} & 11 & \{1,2,16,17,22,26\} & 20 & \{3,9,14,16,18,24\}\\
3 & \{5,6,10,19,20,24\} & 12 & \{2,6,10,17,24,25\} & 21 & \{7,8,9,16,19,25\}\\
4 & \{7,8,9,15,19,26\} & 13 & \{2,7,10,15,23,27\} & 22 & \{5,9,11,17,19,23\}\\
5 & \{3,6,10,16,22,27\} & 14 & \{1,6,15,17,20,25\} & 23 & \{2,6,13,17,22,24\}\\
6 & \{3,5,12,14,23,27\} & 15 & \{4,8,13,14,18,27\} & 24 & \{3,8,12,18,20,23\}\\
7 & \{1,4,13,19,21,26\} & 16 & \{5,9,11,18,20,21\} & 25 & \{1,10,12,14,21,26\}\\
8 & \{2,4,15,18,21,24\} & 17 & \{2,11,12,14,22,23\} & 26 & \{4,7,10,11,25,27\}\\
9 & \{1,4,16,20,21,22\} & 18 & \{1,8,15,16,20,24\} & 27 & \{5,6,13,15,19,26\}\\
\bottomrule
\end{array}
\]
\endgroup
The table defines a simple $6$-regular graph, shown in Figure~\ref{fig:affine-counterexample}. Define
\[
  \ell(v)=v
  \qquad (v\in V(G)).
\]
For every vertex $v$, the six integers in $N_G(v)$ have sum $84$. Hence $\ell$ is an ordinary distance magic labeling of $G$ with magic constant $84$.

Let $A=(a_{ij})$ be the adjacency matrix of $G$. The classes $[e_1],\ldots,[e_{26}]$ form a basis of
\[
  \Lamone=\Z^{27}/\Z\mathbf1.
\]
Since
\[
  [e_{27}]=-\sum_{i=1}^{26}[e_i],
\]
for $1\le j\le26$ we have
\[
  \overline A([e_j])
  =[Ae_j]
  =\sum_{i=1}^{26}(a_{ij}-a_{27,j})[e_i].
\]
Hence, with respect to this basis, the reduced adjacency operator is represented by
\[
  M=(a_{ij}-a_{27,j})_{1\le i,j\le26}.
\]
The matrix $M$ is displayed explicitly in Appendix~\ref{app:matrix}. Let $M'$ be the $24\times24$ submatrix of $M$ formed by rows $3,\ldots,26$ and columns $1,\ldots,24$. A direct determinant computation gives
\[
  \det M'=-11.
\]
Since $-11\not\equiv0\pmod 3$, the reduction of $M'$ modulo $3$ is nonsingular. Hence
\[
  \operatorname{rank}_{\F_3}\overline A_3\ge24
\]
and therefore
\[
  \dim_{\F_3}\ker\overline A_3\le2.
\]

For the reverse inequality, consider the vectors
\[
  u=(1,2,0,1,2,0,1,2,0,1,2,0,1,2,0,1,2,0,1,2,0,1,2,0,1,2)^T
\]
and
\[
  v=(1,1,2,0,2,0,2,1,1,2,0,0,2,0,2,0,1,1,0,0,0,0,0,0,0,1)^T
\]
in $\F_3^{26}$. With respect to the basis $[e_1],\ldots,[e_{26}]$, the first vector is
\[
  u=\bigl(\ell(1)-\ell(27),\ldots,\ell(26)-\ell(27)\bigr)^T\pmod 3,
\]
so it is precisely the coordinate vector of the reduction modulo $3$ of the ordinary distance magic labeling $\ell(i)=i$. Hence $Mu=0$ over $\F_3$. Direct multiplication also gives $Mv=0$ over $\F_3$. The last two coordinates of $u$ and $v$ are respectively $(1,2)$ and $(0,1)$, so these vectors are linearly independent. Therefore
\[
  \dim_{\F_3}\ker\overline A_3\ge2.
\]
Consequently,
\[
  \dim_{\F_3}\ker\overline A_3=2.
\]

Take $\Gamma=(\Z/3\Z)^3$. Proposition~\ref{prop:group-distance-kernel-test}\emph{(a)} now gives both conclusions. Since the nullity is $2=r-1$, the graph admits a generating $\Gamma$-magic map, while an affinely generating $\Gamma$-magic map would require nullity at least $3$. Hence $G$ is not $\Gamma$-distance magic and therefore is not group distance magic.
\end{proof}

\begin{figure}[htbp]
\centering
\begin{tikzpicture}[scale=0.86]
  \foreach \i in {1,...,27}{
    \coordinate (c\i) at ({90 - (\i-1)*13.3333}:5.6cm);
  }
  \foreach \a/\b in {1/7,1/9,1/11,1/14,1/18,1/25,2/8,2/11,2/12,2/13,2/17,2/23,
                     3/5,3/6,3/10,3/19,3/20,3/24,4/7,4/8,4/9,4/15,4/19,4/26,
                     5/6,5/10,5/16,5/22,5/27,6/12,6/14,6/23,6/27,7/13,7/19,
                     7/21,7/26,8/15,8/18,8/21,8/24,9/16,9/20,9/21,9/22,10/12,
                     10/13,10/25,10/26,11/16,11/17,11/22,11/26,12/17,12/24,
                     12/25,13/15,13/23,13/27,14/15,14/17,14/20,14/25,15/18,
                     15/27,16/18,16/20,16/21,17/22,17/23,18/20,18/24,19/21,
                     19/22,19/27,20/24,21/25,22/23,23/24,25/26,26/27}{
    \draw[gray!70,line width=0.3pt] (c\a) -- (c\b);
  }
  \foreach \i in {1,...,27}{
    \node[circle,draw,fill=white,inner sep=0pt,minimum size=4.6mm,font=\tiny]
      at (c\i) {\i};
  }
\end{tikzpicture}
\caption{The graph $G$ of Theorem~\ref{thm:affine-counterexample}.}
\label{fig:affine-counterexample}
\end{figure}

\begin{remark}\label{rem:affine-counterexample-computation}
The graph in Theorem~\ref{thm:affine-counterexample} was found computationally. We fixed the vertex set $\{1,\ldots,27\}$ and the labeling $\ell(i)=i$, and searched for a $6$-regular graph by means of a $0$--$1$ integer feasibility problem. For each unordered pair $\{i,j\}$, let $x_{ij}=x_{ji}\in\{0,1\}$ be the indicator of the edge $ij$. The constraints were
\[
  \sum_{j\ne i}x_{ij}=6
  \qquad\text{and}\qquad
  \sum_{j\ne i}j x_{ij}=84
  \qquad (1\le i\le27).
\]
Thus every feasible solution is a $6$-regular graph for which the identity labeling is distance magic. Feasible solutions were then tested by computing the rank of the reduced adjacency matrix modulo $3$. For the graph above this rank is exactly $24$. The proof gives an exact certificate for this value without relying on the rank computation. The $24\times24$ minor with determinant $-11$ gives rank at least $24$, while the two explicit linearly independent null vectors give nullity at least $2$ and hence rank at most $24$. Appendix~\ref{app:verification} gives a short verification script that reconstructs the adjacency matrix from the neighborhood table, checks regularity and the distance-magic equations, forms the reduced adjacency matrix, verifies the two null vectors and the determinant certificate, and independently computes its ranks over $\F_3$ and $\mathbb Q$.
\end{remark}

Theorem~\ref{thm:affine-counterexample} shows that affine generation remains too strong for a universal consequence of ordinary distance magic. This leads to the following generating group distance magic conjecture.

\begin{conjecture}\label{conj:generating-distance-magic}
Let $G$ be a distance magic graph of order $n$. Then, for every abelian group $\Gamma$ of order $n$, the graph $G$ admits a generating $\Gamma$-magic map.
\end{conjecture}

For regular distance magic graphs of nonsquarefree order, the following proposition gives a structural consequence in the direction of Conjecture~\ref{conj:generating-distance-magic}. It shows that generating magic maps cannot occur only over cyclic groups.

\begin{proposition}\label{prop:cyclic-only-affine}
Let $G$ be a regular graph of order $n$, where $p^2\mid n$ for some prime $p$. Suppose that, for every abelian group $\Gamma$ of order $n$, the existence of a generating $\Gamma$-magic map implies that $\Gamma$ is cyclic. Then $G$ admits no affinely generating $\Gamma$-magic map and no $\Gamma$-distance magic labeling for any abelian group $\Gamma$ of order $n$.
\end{proposition}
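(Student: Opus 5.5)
We argue by contradiction: assume $G$ admits an affinely generating $\Gamma$-magic map for some abelian group $\Gamma$ of order $n$, and build a noncyclic group $\Gamma'$ of order $n$ over which $G$ has a generating magic map. Since a $\Gamma$-distance magic labeling is affinely generating by Proposition~\ref{prop:affine-basic}\emph{(e)}, the second assertion then follows from the first.

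First we show that $\Gamma$ itself is cyclic. By Proposition~\ref{prop:affine-basic}\emph{(d)}, an affinely generating map is generating, so the hypothesis forces $\Gamma\cong\Z/n\Z$. By Theorem~\ref{thm:main-characterization}\emph{(b)} with $d_r=n$, we obtain $\Z/n\Z\hookrightarrow\ker\overline A_n$.

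Next we pass to a noncyclic group. Write $n=p^2m$ and put $\Gamma'=\Z/p\Z\oplus\Z/pm\Z$. This group has order $n$ and is noncyclic, since $\gcd(p,pm)=p>1$. Its invariant factors are $d_1=p$ and $d_2=pm$, so $\BG[\Gamma']=\Z/p\Z$ and $\ex(\Gamma')=pm$. By Theorem~\ref{thm:main-characterization}\emph{(a)}, it suffices to show $\Z/p\Z\hookrightarrow\ker\overline A_{pm}$.

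The key step is to transfer a kernel element from modulus $n$ to modulus $pm$. Choose $[x]\in\ker\overline A_n$ of order exactly $n$; such an element exists because $\Z/n\Z$ embeds. Then $p^2m\mid Ax-c\mathbf1$ for some integer vector $x$ and constant $c$. Consider $y=px$ reduced modulo $pm$. We have $A y=pAx\equiv pc\mathbf1\pmod{p^2m}$, hence also modulo $pm$, so $[y]\in\ker\overline A_{pm}$. Moreover, $m[y]$ corresponds to $pmx$ modulo $pm$, which is $0$. So $[y]$ has order dividing $m$; this is the wrong order.

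Instead we use $y=mx$ reduced modulo $pm$. Then $Ay=mAx=mc\mathbf1+m\cdot p^2m\,z$ for some integer vector $z$, and this is $\equiv mc\mathbf1\pmod{pm}$. Hence $[y]\in\ker\overline A_{pm}$, and $p[y]=[pmx]=0$. It remains to check that $[y]\neq0$, that is, that $mx$ is not constant modulo $pm$. Equivalently, $x$ must not be constant modulo $p$. Suppose it were. Then $x=t\mathbf1+pw$, so $[x]=p[w]$ in $\Lamone/n\Lamone$, and $(n/p)[x]=n[w]=0$. This contradicts the fact that $[x]$ has order $n$. Thus $[y]$ has order exactly $p$, giving $\Z/p\Z\hookrightarrow\ker\overline A_{pm}$.

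It follows that $G$ admits a generating $\Gamma'$-magic map with $\Gamma'$ noncyclic, contradicting the hypothesis. The main obstacle is the bookkeeping in this last step: confirming that the order-$n$ element reduces to a nonzero order-$p$ element at modulus $pm$. The order argument above handles it.
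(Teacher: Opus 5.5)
Your proof is correct, and its skeleton matches the paper's: you force $\Gamma\cong\Z/n\Z$, produce an embedding $\Z/p\Z\hookrightarrow\ker\overline A_{n/p}$, and apply Theorem~\ref{thm:main-characterization}\emph{(a)} to $\Z/p\Z\oplus\Z/(n/p)\Z$, which is exactly the paper's $\Gamma'$. The only difference is how you get the embedding at modulus $n/p$.

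The paper applies Lemma~\ref{lem:affine-param}\emph{(a)} directly with modulus $n/p$; part~\emph{(a)} imposes no divisibility condition on the modulus. This gives $\Z/(n/p)\Z\cong\operatorname{Hom}(\Z/n\Z,\Z/(n/p)\Z)\hookrightarrow\ker\overline A_{n/p}$ in one line, and $p\mid n/p$ then supplies the copy of $\Z/p\Z$.

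You instead invoke Theorem~\ref{thm:main-characterization}\emph{(b)} at modulus $n$ and descend by hand. When $[x]$ is the class of the magic map $f$ itself, your class $[mx]$ (with your $m=n/p^2$) is precisely $\Psi_f(\chi)$ for the order-$p$ character $\chi(a)=ma$ from $\Z/n\Z$ to $\Z/pm\Z$. Your order argument, that $x$ constant modulo $p$ would force $(n/p)[x]=0$, plays the role of the injectivity of $\Psi_f$.

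The paper's route is shorter. Yours is self-contained at the level of vectors and shows slightly more: every element of order $n$ in $\ker\overline A_n$, not only one coming from a magic map, yields an element of order $p$ in $\ker\overline A_{n/p}$. In a final write-up, delete the abandoned attempt with $y=px$.
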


\begin{proof}
Suppose that $G$ admits an affinely generating $\Gamma$-magic map for some abelian group $\Gamma$ of order $n$. Every affinely generating map is generating, so the hypothesis forces $\Gamma\cong\Z/n\Z$. Fix a prime $p$ with $p^2\mid n$ and put $m=n/p$. Since $m\mid n$, Lemma~\ref{lem:affine-param}\emph{(a)} gives
\[
  \Z/m\Z\cong\operatorname{Hom}(\Z/n\Z,\Z/m\Z)\hookrightarrow\ker\overline A_m,
\]
and $p\mid m$, so $\Z/p\Z\hookrightarrow\ker\overline A_m$. Now
\[
  \Gamma'=\Z/p\Z\oplus\Z/m\Z
\]
is an invariant-factor decomposition because $p\mid m$. The group $\Gamma'$ has order $n$, exponent $m$ and $B_{\Gamma'}\cong\Z/p\Z$, and it is noncyclic. Theorem~\ref{thm:main-characterization}\emph{(a)} therefore produces a generating $\Gamma'$-magic map, contradicting the hypothesis. The assertion about distance magic labelings follows because these are affinely generating.
\end{proof}

Let $\ell\colon V(G)\to\{1,\ldots,n\}$ be an ordinary distance magic labeling of a regular graph $G$ of order $n$. Reduction modulo $n$ turns $\ell$ into a bijective $\Z/n\Z$-magic map and hence into an affinely generating $\Z/n\Z$-magic map. If $p^2\mid n$ for some prime $p$, the contrapositive of Proposition~\ref{prop:cyclic-only-affine} therefore shows that $G$ admits a generating $\Gamma$-magic map for some noncyclic abelian group $\Gamma$ of order $n$.

A stronger positive result holds for all labeling groups with at most two generators.

\begin{theorem}\label{thm:two-generated-distance-magic}
Let $G$ be a regular distance magic graph of order $n$, and let $\Gamma$ be an abelian group of order $n$ generated by at most two elements. Then $G$ admits a generating $\Gamma$-magic map.
\end{theorem}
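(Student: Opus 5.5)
The plan is to apply Theorem~\ref{thm:main-characterization}\emph{(a)}, so that the whole problem reduces to finding a cyclic subgroup of order $d_1$ inside $\ker\overline A_{d_2}$. Write $\Gamma\cong\Z/d_1\Z\oplus\Z/d_2\Z$ with $d_1\mid d_2$ and $d_1d_2=n$; a cyclic $\Gamma$ is the case $d_1=1$. Then $\BG\cong\Z/d_1\Z$, and it suffices to prove
\[
  \Z/d_1\Z\hookrightarrow\ker\overline A_{d_2}.
\]
If $d_1=1$ there is nothing to prove, because $\BG=\{0\}$.

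Now assume $d_1>1$. Let $\ell\colon V(G)\to\{1,\ldots,n\}$ be a distance magic labeling. Reducing $\ell$ modulo $n$ gives a bijective $\Z/n\Z$-magic map, as noted after Proposition~\ref{prop:cyclic-only-affine}. This map is affinely generating by Proposition~\ref{prop:affine-basic}\emph{(e)}. Lemma~\ref{lem:affine-param}\emph{(a)} applies with any modulus $m$; we take $m=d_2$. It gives an injection
\[
  \operatorname{Hom}(\Z/n\Z,\Z/d_2\Z)\hookrightarrow\ker\overline A_{d_2}.
\]
Since $d_2\mid n$, the domain is cyclic of order $\gcd(n,d_2)=d_2$. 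Concretely, the image is generated by the class of $\ell\bmod d_2$.

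Because $d_1\mid d_2$, the cyclic group $\Z/d_2\Z$ contains a copy of $\Z/d_1\Z$; explicitly, it is generated by the class of $(d_2/d_1)\ell\bmod d_2$. Hence $\Z/d_1\Z\hookrightarrow\ker\overline A_{d_2}$. Theorem~\ref{thm:main-characterization}\emph{(a)} then gives the generating $\Gamma$-magic map.

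There is no serious obstacle. The one point to check is that Lemma~\ref{lem:affine-param}\emph{(a)} is valid for a modulus $m$ not divisible by $\ex(H)$. It is: its proof uses only that $\chi\circ f$ is constant exactly when $\chi$ kills all label differences. The regularity hypothesis enters only through Theorem~\ref{thm:main-characterization}.

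If one prefers to avoid this check, a direct verification also works. The class $[\ell\bmod d_2]$ lies in the kernel because the neighborhood sums of $\ell$ are constant. Its order is exactly $d_2$ because the labels $1$ and $2$ both occur, so $\ell$ has a label difference equal to $1$. Hence $\ell\bmod d_2$ is nonconstant modulo any proper divisor of $d_2$.
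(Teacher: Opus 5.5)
Your proof is correct and follows the paper. The paper reduces $\ell$ modulo $d_2$, uses the label difference $1$ to see that this map is affinely generating, and applies Theorem~\ref{thm:main-characterization}\emph{(b)}; you obtain the same subgroup $\langle[\ell\bmod d_2]\rangle\cong\Z/d_2\Z$ of $\ker\overline A_{d_2}$ from Lemma~\ref{lem:affine-param}\emph{(a)} applied to the bijective $\Z/n\Z$-labeling with modulus $d_2$, which is legitimate since part~\emph{(a)} holds for every $m\ge1$, and both arguments then pass to the subgroup of order $d_1$ and invoke part~\emph{(a)}.

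One wording fix in your backup argument: $[\ell\bmod d_2]$ has order $d_2$ because $t(\ell\bmod d_2)$ is nonconstant for every $0<t<d_2$, since the label difference $1$ becomes $t\not\equiv0\pmod{d_2}$; your phrase about being nonconstant modulo every proper divisor of $d_2$ is false if read literally, for the divisor $1$.
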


\begin{proof}
If $\Gamma$ is cyclic, a constant map whose value is a generator of $\Gamma$ is generating and magic because $G$ is regular. Suppose that $\Gamma$ is noncyclic. Its invariant factor decomposition has the form
\[
  \Gamma\cong \Z/d_1\Z\oplus\Z/d_2\Z,
  \qquad d_1\mid d_2,
\]
with $d_1>1$. Let
\[
  \ell\colon V(G)\longrightarrow\{1,\ldots,n\}
\]
be an ordinary distance magic labeling, with integer magic constant $c$. Define
\[
  g(v)=\ell(v)\pmod{d_2}.
\]
Reducing the neighborhood-sum equations modulo $d_2$ shows that $g$ is a $\Z/d_2\Z$-magic map. Since $\ell$ is bijective, there are vertices $x,y$ with $\ell(x)=1$ and $\ell(y)=2$. Hence
\[
  g(y)-g(x)=1\pmod{d_2},
\]
so $D(g)=\Z/d_2\Z$. Thus $g$ is affinely generating. Theorem~\ref{thm:main-characterization}\emph{(b)} gives
\[
  \Z/d_2\Z\hookrightarrow\ker\overline A_{d_2}.
\]
Since $d_1\mid d_2$, one has
\[
  \Z/d_1\Z\hookrightarrow\Z/d_2\Z
  \hookrightarrow\ker\overline A_{d_2}.
\]
For the present invariant factor decomposition, $B_\Gamma\cong\Z/d_1\Z$. Theorem~\ref{thm:main-characterization}\emph{(a)} therefore yields a generating $\Gamma$-magic map on $G$.
\end{proof}

In particular, the graph in Theorem~\ref{thm:affine-counterexample} is not a counterexample to Conjecture~\ref{conj:generating-distance-magic}. Up to isomorphism, the abelian groups of order $27$ are $\Z/27\Z$, $\Z/9\Z\oplus\Z/3\Z$, and $(\Z/3\Z)^3$. The first two are covered by Theorem~\ref{thm:two-generated-distance-magic}, while the last admits a generating magic map by Theorem~\ref{thm:affine-counterexample}.

\begin{corollary}\label{cor:cubefree-distance-magic}
Let $G$ be a regular distance magic graph of cube-free order $n$, that is, $p^3\nmid n$ for every prime $p$. Then, for every abelian group $\Gamma$ of order $n$, the graph $G$ admits a generating $\Gamma$-magic map.
\end{corollary}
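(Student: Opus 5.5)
The plan is to deduce the corollary directly from Theorem~\ref{thm:two-generated-distance-magic}: it suffices to show that every abelian group $\Gamma$ of cube-free order $n$ is generated by at most two elements. The theorem then applies to each such $\Gamma$ without further work, because the regularity and distance magic hypotheses on $G$ are exactly those of the theorem.

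For the group-theoretic step I will use the invariant factor decomposition
\[
  \Gamma\cong\Z/d_1\Z\oplus\cdots\oplus\Z/d_r\Z,
  \qquad d_1\mid\cdots\mid d_r,
\]
with all $d_i>1$, so that $r$ is the minimal number of generators of $\Gamma$. Suppose, for contradiction, that $r\ge3$. Choose a prime $p$ dividing $d_1$; this is possible since $d_1>1$. The divisibility chain gives $p\mid d_1$, $p\mid d_2$ and $p\mid d_3$. Hence
\[
  p^3\mid d_1d_2d_3\mid|\Gamma|=n,
\]
which contradicts cube-freeness. Therefore $r\le2$, and $\Gamma$ is generated by at most two elements. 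The case $n=1$, where $\Gamma$ is trivial, is included with $r=0$; there any constant map is generating and magic.

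Finally, I apply Theorem~\ref{thm:two-generated-distance-magic} to $G$ and $\Gamma$ to obtain a generating $\Gamma$-magic map. There is no real obstacle here. The only point requiring care is to work with the invariant factors, which are not arbitrary cyclic factors, so that the number $r$ of nontrivial factors really equals the minimal number of generators. It is this fact that lets the prime dividing $d_1$ propagate to $d_2$ and $d_3$.
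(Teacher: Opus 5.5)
Your proposal is correct and matches the paper's proof. The paper also takes the invariant factor decomposition with nontrivial factors, notes that $r\ge3$ would force $p^3\mid n$ for any prime $p\mid d_1$, and then invokes Theorem~\ref{thm:two-generated-distance-magic}; your separate treatment of $n=1$ is harmless but already covered, since the trivial group is cyclic.
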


\begin{proof}
Let
\[
  \Gamma\cong \Z/d_1\Z\oplus\cdots\oplus\Z/d_r\Z,
  \qquad d_1\mid\cdots\mid d_r,
\]
be the invariant factor decomposition, with all displayed factors nontrivial. If $r\ge3$, then any prime divisor $p$ of $d_1$ divides $d_1,d_2,d_3$, and hence $p^3\mid|\Gamma|=n$. This contradicts the assumption that $n$ is cube-free. Thus $r\le2$, so $\Gamma$ is generated by at most two elements. Theorem~\ref{thm:two-generated-distance-magic} applies.
\end{proof}

%======================================================================
\section{Applications to graph families}\label{sec:applications}

In this section, we study the existence of generating, affinely generating, and $\Gamma$-distance magic maps on several standard graph families.

\subsection{Complete graphs}

\begin{proposition}\label{prop:complete}
Let $n\ge2$, and let $\Gamma$ be a finite abelian group. Every $\Gamma$-magic map on $K_n$ is constant. Consequently, $K_n$ admits an affinely generating $\Gamma$-magic map if and only if $\Gamma$ is trivial, and it admits a generating $\Gamma$-magic map if and only if $\Gamma$ is cyclic. In particular, if $|\Gamma|=n$, then $K_n$ admits no $\Gamma$-distance magic labeling.
\end{proposition}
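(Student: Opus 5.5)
The proof is a direct computation followed by an appeal to the definitions, and I would organise it in four steps.

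First, show that every $\Gamma$-magic map $f$ on $K_n$ is constant. The open neighborhood of $v$ in $K_n$ is $V\setminus\{v\}$. Put $S=\sum_{u\in V(K_n)}f(u)$. The magic condition with constant $\gamma$ then reads
\[
  S-f(v)=\gamma\qquad\text{for every }v\in V(K_n).
\]
Hence $f(v)=S-\gamma$ for every $v$, so $f$ is constant. This uses $n\ge2$ only so that neighborhoods are the full complements; for $n=1$ the statement would be vacuous anyway.

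Second, treat affine generation. A constant map has $D(f)=\{0\}$. Hence an affinely generating $\Gamma$-magic map exists only if $\Gamma=\{0\}$. Conversely, for trivial $\Gamma$ the zero map is magic and $D(f)=\Gamma$.

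Third, treat generation. For a constant map with value $a$, the labels generate $\langle a\rangle$, which equals $\Gamma$ exactly when $\Gamma$ is cyclic with generator $a$. Conversely, if $\Gamma$ is cyclic, the constant map at a generator is magic, since every constant map on a regular graph is magic. So it is a generating $\Gamma$-magic map.

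Finally, suppose $|\Gamma|=n\ge2$. A $\Gamma$-distance magic labeling would be bijective and therefore nonconstant, contradicting the first step. Equivalently, by Proposition~\ref{prop:affine-basic}\emph{(e)} it would be affinely generating, which forces $\Gamma$ to be trivial, impossible since $|\Gamma|=n\ge2$.

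As a consistency check against Theorem~\ref{thm:main-characterization}, one could note that $A=J-I$. On constants-modulo classes, $\overline A_m$ acts as $-\mathrm{id}$, so $\ker\overline A_m=0$. This recovers the conclusions: $\Gamma\hookrightarrow 0$ forces $\Gamma$ trivial, and $\BG\hookrightarrow 0$ forces $r\le1$, that is, $\Gamma$ cyclic. The direct argument is simpler, so I would present it. No step is a real obstacle; the only care needed is the $n\ge2$ hypothesis and the trivial-group edge case.
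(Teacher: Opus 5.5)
Your proposal is correct and follows essentially the same route as the paper: the neighborhood sum $S-f(v)$ forces every magic map to be constant, and the affine, generating, and distance-magic conclusions are then read off directly from constancy. Your side check that $\overline A_m=-\mathrm{id}$, so $\ker\overline A_m=0$, is also correct and consistent with Theorem~\ref{thm:main-characterization}, though the paper does not use it.
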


\begin{proof}
Let
\[
  s=\sum_{v\in V(K_n)}f(v).
\]
The neighborhood sum at a vertex $v$ is $s-f(v)$. If these sums are equal for all vertices, then $s-f(v)=s-f(w)$ for every $v,w$, and hence $f(v)=f(w)$. Thus every magic map on $K_n$ is constant.

A constant map has affine difference group $0$, so it is affinely generating if and only if $\Gamma$ is trivial. A constant map with value $a$ is generating exactly when $\langle a\rangle=\Gamma$, which is possible exactly when $\Gamma$ is cyclic. Finally, if $|\Gamma|=n\ge2$, then a constant map is not bijective and hence cannot be a $\Gamma$-distance magic labeling.
\end{proof}

\subsection{Cayley graphs on finite $p$-groups}

For a finite group $P$ and an inverse-closed set $S\subseteq P\setminus\{1\}$, the Cayley graph $\operatorname{Cay}(P,S)$ has vertex set $P$, with $x$ adjacent to $y$ if and only if $x^{-1}y\in S$. The set $S$ is called the \emph{connection set}, and $\operatorname{Cay}(P,S)$ is $|S|$-regular.

Let $K[P]$ denote the group algebra of $P$ over a field $K$. The augmentation map is
\[
  \varepsilon\colon K[P]\longrightarrow K,
  \qquad
  \varepsilon\left(\sum_{g\in P}a_g g\right)=\sum_{g\in P}a_g.
\]
For $a\in K[P]$, the scalar $\varepsilon(a)$ is called the augmentation of $a$. Its kernel is the augmentation ideal.

\begin{lemma}\label{lem:augmentation}
Let $P$ be a finite $p$-group. An element of $\F_p[P]$ is a unit if and only if its augmentation is nonzero.
\end{lemma}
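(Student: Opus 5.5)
The plan is to use the augmentation ideal $I=\ker\varepsilon$ of $\F_p[P]$ and show that it is nilpotent. The key identity is
\[
  (g-1)^{p^e}=g^{p^e}-1 \qquad\text{in characteristic }p.
\]
Since every $g\in P$ has order a power of $p$, each generator $g-1$ of $I$ is therefore nilpotent.

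\emph{Easy direction.} Since $\varepsilon$ is a ring homomorphism onto the field $\F_p$, a unit $u$ satisfies $\varepsilon(u)\varepsilon(u^{-1})=1$. Hence $\varepsilon(u)\ne0$.

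\emph{Converse.} Let $a\in\F_p[P]$ with $\varepsilon(a)=c\ne0$. Write
\[
  a=c(1-x), \qquad x=1-c^{-1}a.
\]
Then $\varepsilon(x)=0$, so $x\in I$. It suffices to show that $x$ is nilpotent, for then the finite geometric sum $\sum_j x^j$ inverts $1-x$.

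\emph{Main step: $I$ is a nilpotent ideal.} This is the step that needs an argument, because $P$ need not be abelian and a sum of nilpotent elements need not be nilpotent. I would argue by induction on $|P|$.

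A nontrivial $p$-group has nontrivial center, so choose a central element $z$ of order $p$. Then $(z-1)^p=z^p-1=0$, and $z-1$ is central. Hence the two-sided ideal $J=(z-1)\F_p[P]$ satisfies $J^p=0$.

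The quotient map $\F_p[P]\to\F_p[P/\langle z\rangle]$ is surjective. It sends $I$ onto the augmentation ideal of the smaller $p$-group $P/\langle z\rangle$, and its kernel is $J$. By induction, some power $I^N$ maps to zero, so $I^N\subseteq J$. Therefore
\[
  I^{Np}\subseteq J^p=0.
\]
Thus $I$ is nilpotent, and in particular $x$ is nilpotent, which completes the converse.

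\emph{Kernel fact.} The identification of the kernel with $J$ is standard. The ideal generated by $\{h-1: h\in\langle z\rangle\}$ equals $(z-1)$, since
\[
  z^i-1=(z-1)(1+z+\cdots+z^{i-1}).
\]

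\emph{Alternative route.} One could avoid the kernel identification by a counting argument. Multiplication by $a$ is injective on the finite ring if its kernel is zero. If $ab=0$ with $b\ne0$, then working in a composition series of the regular module, whose only simple $\F_p[P]$-module is trivial, $a$ acts on each factor as $\varepsilon(a)\ne0$, hence invertibly. This is the same nilpotence fact in module form, and I would use whichever is shorter. I will present the induction via the central element, since it is self-contained.
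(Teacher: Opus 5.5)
Your proposal is correct and follows the paper's route: the augmentation is a ring homomorphism for the ``only if'' direction, and for the ``if'' direction you write $a$ as a nonzero scalar times $1$ plus an element of the augmentation ideal, which is nilpotent, and invert with a finite geometric series. The only difference is that the paper cites the nilpotence of the augmentation ideal from Carlson's book, whereas you prove it yourself by induction on $|P|$ using a central element of order $p$, which makes your version self-contained.
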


\begin{proof}
It is a classical result that the augmentation ideal of $\F_p[P]$ is nilpotent (see~\cite[Proposition~1.2]{Carlson96}). Let $I$ denote this ideal. If $a\in\F_p[P]$ has nonzero augmentation $\lambda$, then $\lambda\in\F_p^\times$ and hence $\lambda^{-1}$ is defined. Since
\[
  \varepsilon(\lambda^{-1}a)=\lambda^{-1}\varepsilon(a)=1
  \qquad\text{and}\qquad
  \varepsilon(1)=1,
\]
we have
\[
  \varepsilon(\lambda^{-1}a-1)=0.
\]
Thus $\lambda^{-1}a-1\in\ker\varepsilon=I$. Writing
\[
  x=\lambda^{-1}a-1,
\]
we obtain
\[
  \lambda^{-1}a=1+x
  \qquad\text{for some }x\in I.
\]
Since $I$ is nilpotent and $x\in I$, there exists $N\ge1$ such that $x^N=0$. Therefore
\[
  (1+x)\bigl(1-x+x^2-\cdots+(-1)^{N-1}x^{N-1}\bigr)
  =1+(-1)^{N-1}x^N
  =1,
\]
so $1+x$ is a unit. Hence $a$ is a unit. Conversely, suppose that $a$ is a unit and choose $b\in\F_p[P]$ such that $ab=1$. Since $\varepsilon$ is a ring homomorphism,
\[
  \varepsilon(a)\varepsilon(b)=\varepsilon(ab)=\varepsilon(1)=1.
\]
Thus $\varepsilon(a)$ is a unit of $\F_p$, and in particular $\varepsilon(a)\ne0$.
\end{proof}

\begin{lemma}\label{lem:cayley-adjacency}
Let $P$ be a finite group, let $K$ be a field, and let $S\subseteq P\setminus\{1\}$ be inverse-closed. Identify $K^P$ with $K[P]$ by
\[
  f\longmapsto \widehat f:=\sum_{x\in P}f(x)x,
\]
and put
\[
  \sigma_S=\sum_{s\in S}s.
\]
If $A$ is the adjacency operator of $\operatorname{Cay}(P,S)$, then
\[
  \widehat{Af}=\widehat f\,\sigma_S
\]
for every $f\in K^P$. Thus, under this identification, $A$ is right multiplication by $\sigma_S$.
\end{lemma}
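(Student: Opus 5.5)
The plan is a direct coordinate computation. We evaluate $(Af)(y)$ from the definition of adjacency in $\operatorname{Cay}(P,S)$ and then compare coefficients in $K[P]$.

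First we fix the convention for the adjacency operator: $(Af)(y)=\sum_{x\in N(y)}f(x)$. By definition $x$ is adjacent to $y$ exactly when $y^{-1}x\in S$. Since $S$ is inverse-closed, this is equivalent to $x^{-1}y\in S$, so the relation is symmetric and $A$ is well defined. The neighbours of $y$ are therefore the elements $x=ys$ with $s\in S$, and they are pairwise distinct as $s$ varies. Hence
\[
  (Af)(y)=\sum_{s\in S}f(ys).
\]

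Next we expand the product on the group-algebra side:
\[
  \widehat f\,\sigma_S=\sum_{x\in P}\sum_{s\in S}f(x)\,xs.
\]
We reindex by $y=xs$, that is, $x=ys^{-1}$. The coefficient of $y$ is then $\sum_{s\in S}f(ys^{-1})$. Substituting $s\mapsto s^{-1}$, which permutes $S$ because $S$ is inverse-closed, turns this into $\sum_{s\in S}f(ys)$. This equals $(Af)(y)$, so $\widehat{Af}=\widehat f\,\sigma_S$.

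The only delicate point is keeping left and right straight in a possibly nonabelian group. The adjacency condition $x^{-1}y\in S$ forces right translates $ys$, which is why the operator corresponds to right multiplication by $\sigma_S$ rather than left multiplication. Inverse-closedness is used twice: once to make the graph undirected, and once to permute $S$ in the reindexing step. Linearity of $f\mapsto\widehat f$ is immediate, so the identity for every $f\in K^P$ completes the proof.
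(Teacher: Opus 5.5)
Your proposal is correct and follows essentially the same route as the paper's proof. Both compute the coefficient of $y$ in $\widehat f\,\sigma_S$ as $\sum_{s\in S}f(ys^{-1})$, use $S=S^{-1}$ to rewrite it as $\sum_{s\in S}f(ys)$, and identify this sum with $(Af)(y)$ via the right translates $ys$.
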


\begin{proof}
For $y\in P$, the coefficient of $y$ in $\widehat f\,\sigma_S$ is
\[
  \sum_{s\in S}f(ys^{-1}).
\]
Since $S=S^{-1}$, this equals
\[
  \sum_{s\in S}f(ys).
\]
The neighbors of $y$ in $\operatorname{Cay}(P,S)$ are precisely the vertices $ys$ with $s\in S$. Hence the last sum is $(Af)(y)$. Therefore the coefficient of $y$ in $\widehat f\,\sigma_S$ equals the coefficient of $y$ in $\widehat{Af}$ for every $y\in P$, and the asserted identity follows.
\end{proof}

\begin{theorem}\label{thm:cayley-p}
Let $P$ be a nontrivial finite $p$-group and let
\[
  G=\operatorname{Cay}(P,S)
\]
be a simple undirected Cayley graph of regularity $k=|S|$. Suppose $p\nmid k$. For every abelian group $\Gamma$ of order $|P|$, the graph $G$ admits no affinely generating $\Gamma$-magic map and no $\Gamma$-distance magic labeling. It admits a generating $\Gamma$-magic map if and only if $\Gamma$ is cyclic.
\end{theorem}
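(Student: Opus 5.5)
The plan is to show that $\ker\overline A_m=0$ for every $m$ that is a power of $p$. Theorem~\ref{thm:main-characterization} then does the rest. Every abelian group $\Gamma$ of order $|P|$ is a $p$-group, so $d_r=\ex(\Gamma)$ is a power of $p$. If $\ker\overline A_{d_r}=0$, part (b) forces $\Gamma=0$, which is impossible since $P$ is nontrivial. So no affinely generating $\Gamma$-magic map exists, and by Proposition~\ref{prop:affine-basic}\emph{(e)} no $\Gamma$-distance magic labeling exists either. Part (a) says a generating map exists iff $\BG\hookrightarrow 0$, i.e.\ iff $\BG=0$, i.e.\ iff $r=1$, i.e.\ iff $\Gamma$ is cyclic. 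For the cyclic direction one can also just use a constant map at a generator, as in Theorem~\ref{thm:two-generated-distance-magic}.

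To get $\ker\overline A_{p^e}=0$, I first reduce to $e=1$. The module $M=(\Z/p^e\Z)^P/(\Z/p^e\Z)\mathbf1$ is free over $\Z/p^e\Z$, and $\overline A_{p^e}$ is an endomorphism of it. By a Nakayama-type argument, an endomorphism of a finite free $\Z/p^e\Z$-module is bijective iff its reduction modulo $p$ is bijective. Concretely, if the matrix of $\overline A_{p^e}$ has determinant that is a unit mod $p$, it is a unit mod $p^e$. Reducing $M$ modulo $p$ gives $\F_p^P/\F_p\mathbf1$ with $\overline A_p$. So it suffices to show that $\overline A_p$ is injective on $\F_p^P/\F_p\mathbf1$.

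For $e=1$, I use Lemma~\ref{lem:cayley-adjacency}: $A$ acts on $\F_p[P]$ as right multiplication by $\sigma_S$. Since $\varepsilon(\sigma_S)=k\not\equiv0\pmod p$, Lemma~\ref{lem:augmentation} makes $\sigma_S$ a unit, so $A_p$ is bijective on $\F_p^P$. Now suppose $A_p x=c\mathbf1$. Since $A_p\mathbf1=k\mathbf1$ with $k$ invertible mod $p$, we get $A_p(x-ck^{-1}\mathbf1)=0$. Hence $x\in\F_p\mathbf1$, and $[x]=0$.

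The main obstacle is the lifting step from $p$ to $p^e$. I would phrase it cleanly as follows: $M$ is finite, so $\overline A_{p^e}$ is injective iff it is surjective. If $[x]\in\ker\overline A_{p^e}$ is nonzero, choose $t$ maximal with $[x]\in p^tM$ and write $[x]=p^t[y]$ with $[y]\notin pM$. Then $p^t\overline A_{p^e}[y]=0$, so $\overline A_{p^e}[y]\in p^{e-t}M\subseteq pM$ (note $t<e$). Reducing mod $p$ gives $\overline A_p[\bar y]=0$ with $[\bar y]\neq0$, a contradiction. For this I need the identification $M/pM\cong\F_p^P/\F_p\mathbf1$ compatible with the operators, which follows from freeness of the quotient by the constant vectors.
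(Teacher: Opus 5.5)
Your proposal is correct and follows the paper's proof essentially step for step. You show $\sigma_S$ is a unit via Lemma~\ref{lem:augmentation}, obtain injectivity of $\overline A_p$ from the constant-vector correction $x-ck^{-1}\mathbf1$, lift to $p^e$ (the paper uses exactly the determinant-is-a-unit-mod-$p$ argument you mention first, and your filtration argument is a valid alternative), and conclude with Theorem~\ref{thm:main-characterization}\emph{(a)},\emph{(b)}.
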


\begin{proof}
By Lemma~\ref{lem:cayley-adjacency}, after identifying $\F_p^P$ with $\F_p[P]$, the adjacency operator $A$ is right multiplication by
\[
  \sigma_S=\sum_{s\in S}s.
\]
Its augmentation is $k$, which is nonzero in $\F_p$. Lemma~\ref{lem:augmentation} shows that $\sigma_S$ is a unit. Right multiplication by $\sigma_S^{-1}$ is therefore the inverse of right multiplication by $\sigma_S$. By Lemma~\ref{lem:cayley-adjacency}, this implies that $A$ is invertible over $\F_p$.

Let $C=\F_p\mathbf 1$ be the subspace of constant functions. Since $G$ is $k$-regular,
\[
  A\mathbf 1=k\mathbf 1,
\]
so $A(C)\subseteq C$, and in fact $A$ acts on $C$ as multiplication by $k$. Because $p\nmid k$, the scalar $k$ is invertible in $\F_p$, and hence $A(C)=C$. We now verify directly that the induced map
\[
  \overline A_p\colon \F_p^P/C\longrightarrow \F_p^P/C
\]
is invertible. Suppose that $\overline A_p([x])=0$. Then $Ax=c\mathbf 1$ for some $c\in\F_p$. Since $k$ is invertible,
\[
  A\bigl(c k^{-1}\mathbf 1\bigr)=c\mathbf 1=Ax.
\]
The invertibility of $A$ gives $x=c k^{-1}\mathbf 1\in C$, and therefore $[x]=0$. Thus $\overline A_p$ is injective, and since $\F_p^P/C$ is finite-dimensional, $\overline A_p$ is invertible.

Let $M$ be an integral matrix representing $\overline A$ with respect to a basis of $\Lamone$. Its reduction modulo $p$ represents $\overline A_p$. Since $\overline A_p$ is invertible,
\[
  \det M\not\equiv0\pmod p.
\]
Thus $\det M$ is a unit modulo every power of $p$, and the reduction of $M$ modulo $p^a$ is invertible for every $a\ge1$.

Since $|P|$ is a power of $p$, every abelian group $\Gamma$ of order $|P|$ is a nontrivial $p$-group. Write
\[
  \Gamma\cong\Z/d_1\Z\oplus\cdots\oplus\Z/d_r\Z,
  \qquad d_1\mid\cdots\mid d_r.
\]
Then $d_r=p^a$ for some $a\ge1$, and the reduction of $M$ modulo $d_r$ represents $\overline A_{d_r}$. Hence
\[
  \ker\overline A_{d_r}=0.
\]
Theorem~\ref{thm:main-characterization}\emph{(b)} now excludes affinely generating $\Gamma$-magic maps, and therefore also $\Gamma$-distance magic labelings. By Theorem~\ref{thm:main-characterization}\emph{(a)}, a generating $\Gamma$-magic map exists if and only if $\BG=0$, which is equivalent to $r=1$ and hence to $\Gamma$ being cyclic.
\end{proof}

For elementary abelian Cayley groups, the same divisibility condition gives an exact distance-magic existence criterion.

\begin{theorem}\label{thm:elementary-cayley}
Let $p$ be a prime and $n\ge1$. Let $P\cong(\Z/p\Z)^n$, and let $G=\operatorname{Cay}(P,S)$ have regularity $k=|S|$. The following conditions are equivalent.
\begin{enumerate}[label=\emph{(\roman*)},leftmargin=2.3em]
\item The graph $G$ admits a $\Gamma$-distance magic labeling for some abelian group $\Gamma$ of order $|P|$.
\item The graph $G$ is $P$-distance magic.
\item The regularity $k$ is divisible by $p$.
\end{enumerate}
Thus, if $p\nmid k$, no abelian group of order $|P|$ gives a distance magic labeling, while if $p\mid k$, the prescribed group $P$ does.
\end{theorem}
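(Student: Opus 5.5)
The plan is to prove the cycle (ii)$\Rightarrow$(i)$\Rightarrow$(iii)$\Rightarrow$(ii).

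The implication (ii)$\Rightarrow$(i) is immediate: take $\Gamma=P$.

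For (i)$\Rightarrow$(iii), argue by contrapositive. If $p\nmid k$, then Theorem~\ref{thm:cayley-p} applies directly, since $P$ is a nontrivial finite $p$-group (as $n\ge1$). It shows that no abelian group $\Gamma$ of order $|P|$ admits a $\Gamma$-distance magic labeling of $G$. Hence (i) forces $p\mid k$.

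For (iii)$\Rightarrow$(ii), we need a construction, and this is the ``known sufficient condition'' mentioned in the introduction. The natural candidate is the identity labeling $f=\mathrm{id}\colon P\to P$, writing $P$ additively as $\F_p^n$. This map is bijective. The neighborhood sum at $x$ is
\[
  \sum_{s\in S}(x+s)=kx+\sum_{s\in S}s.
\]
Since $p\mid k$ and $P$ has exponent $p$, we get $kx=0$. The sum is therefore the constant $\sigma=\sum_{s\in S}s$, independent of $x$, so $f$ is a $P$-distance magic labeling.

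Alternatively, one can phrase this through Theorem~\ref{thm:main-characterization}(c). The coordinate functionals give classes in $\ker\overline A_p$: for a linear $\chi$, one has $A\chi=k\chi+\chi(\sigma)\mathbf1\equiv\chi(\sigma)\mathbf1$. These classes span a copy of $(\Z/p\Z)^n$, and the copy is vertex-separating. The direct verification above is simpler, and that is the route I will write.

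The final sentence of the theorem just restates the two directions. I expect no real obstacle here. The only care needed is noting that $n\ge1$ makes $P$ nontrivial, so that Theorem~\ref{thm:cayley-p} applies, and that the exponent-$p$ property is exactly what kills the $kx$ term.
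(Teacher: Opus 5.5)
Your proposal is correct and follows essentially the same route as the paper: you obtain (ii)$\Rightarrow$(i) trivially, (i)$\Rightarrow$(iii) by contrapositive from Theorem~\ref{thm:cayley-p}, and (iii)$\Rightarrow$(ii) from the identity labeling, where $kx=0$ because $p\mid k$ and $\ex(P)=p$. Your alternative via Theorem~\ref{thm:main-characterization}\emph{(c)} with linear characters is also valid, but, as you note, the direct verification is the simpler argument and is the one the paper uses.
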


\begin{proof}
Part~\emph{(ii)} implies part~\emph{(i)}. If part~\emph{(i)} holds and $p\nmid k$, Theorem~\ref{thm:cayley-p} gives a contradiction. Hence part~\emph{(i)} implies part~\emph{(iii)}.

Suppose that $p\mid k$, and regard $P$ additively. Define
\[
  f\colon P\longrightarrow P,
  \qquad
  f(x)=x.
\]
For every $x\in P$, the sum of the labels on its neighborhood is
\[
  \sum_{s\in S}f(x+s)
  =kx+\sum_{s\in S}s
  =\sum_{s\in S}s,
\]
because $p\mid k$ and $\ex(P)=p$. Thus $f$ is a bijective $P$-magic map, so $G$ is $P$-distance magic. This proves that part~\emph{(iii)} implies part~\emph{(ii)}. The same implication also follows from~\cite[Theorem~2.5(b)]{CFSZ16}.
\end{proof}

For $q\ge2$, let $\mathcal A$ be a set with $q$ elements. For $x,y\in\mathcal A^d$, their \emph{Hamming distance} is
\[
  d_H(x,y)=\bigl|\{i:x_i\ne y_i\}\bigr|.
\]
When $\mathcal A=\F_q$, the \emph{Hamming weight} of $x\in\F_q^d$ is
\[
  \operatorname{wt}_H(x)=\bigl|\{i:x_i\ne0\}\bigr|=d_H(x,0).
\]
For $1\le j\le d$, let $H_j(d,q)$ denote the $j$th relation graph of the Hamming scheme $H(d,q)$; that is, $H_j(d,q)$ has vertex set $\mathcal A^d$, with two vertices adjacent if and only if $d_H(x,y)=j$. In particular, $H_1(d,q)$ is the usual Hamming graph.

\begin{corollary}\label{cor:hamming-relations-exact}
Let $q=p^a$ be a prime power and $1\le j\le d$. The graph $H_j(d,q)$ admits a $\Gamma$-distance magic labeling for some abelian group $\Gamma$ of order $q^d$ if and only if
\[
  p\mid\binom dj.
\]

\noindent More precisely, if $p\mid\binom dj$, then $H_j(d,q)$ is $\F_q^d$-distance magic. If $p\nmid\binom dj$, then, for every abelian group $\Gamma$ of order $q^d$, the graph admits no affinely generating $\Gamma$-magic map, and it admits a generating $\Gamma$-magic map if and only if $\Gamma$ is cyclic.
\end{corollary}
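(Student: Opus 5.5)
The plan is to realize $H_j(d,q)$ as a Cayley graph on the elementary abelian $p$-group $P=\F_q^d\cong(\Z/p\Z)^{ad}$, with connection set
\[
  S_j=\{x\in\F_q^d:\operatorname{wt}_H(x)=j\},
\]
and then apply Theorem~\ref{thm:elementary-cayley} and Theorem~\ref{thm:cayley-p}. We identify $\mathcal A$ with $\F_q$. Since $d_H(x,y)=\operatorname{wt}_H(y-x)$, vertices $x$ and $y$ are adjacent exactly when $y-x\in S_j$. The set $S_j$ is inverse-closed because $\operatorname{wt}_H(-x)=\operatorname{wt}_H(x)$. It avoids $0$ because $j\ge1$. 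Hence $H_j(d,q)=\operatorname{Cay}(P,S_j)$ is a simple undirected Cayley graph on a nontrivial group.

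The regularity is obtained by choosing the $j$ support positions and then a nonzero entry in each:
\[
  k=|S_j|=\binom dj (q-1)^j.
\]
Since $q=p^a$, we have $q-1\equiv-1\pmod p$, so $p\nmid(q-1)^j$. Therefore $p\mid k$ if and only if $p\mid\binom dj$. This short divisibility step is the only point that needs care; the rest is bookkeeping.

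Suppose $p\mid\binom dj$. Then $p\mid k$, and the implication (iii)$\Rightarrow$(ii) of Theorem~\ref{thm:elementary-cayley} shows that $G$ is $P$-distance magic, that is, $\F_q^d$-distance magic. Concretely, the identity labeling works, and this gives the existence direction.

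Suppose instead $p\nmid\binom dj$. Then $p\nmid k$, and $P$ is a nontrivial finite $p$-group. Theorem~\ref{thm:cayley-p} then shows that, for every abelian group $\Gamma$ of order $q^d=|P|$, the graph admits no affinely generating $\Gamma$-magic map. It also admits no $\Gamma$-distance magic labeling, and it admits a generating $\Gamma$-magic map if and only if $\Gamma$ is cyclic. Equivalently, by Theorem~\ref{thm:elementary-cayley}, no $\Gamma$-distance magic labeling exists for any abelian group $\Gamma$ of this order. Combining the two cases gives the stated equivalence together with the more precise assertions.
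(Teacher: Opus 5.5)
Your proposal is correct and follows the paper's proof essentially step for step. You realize $H_j(d,q)$ as $\operatorname{Cay}(\F_q^d,S_j)$ with regularity $\binom dj(q-1)^j$, use $p\nmid q-1$ to reduce $p\mid k$ to $p\mid\binom dj$, and then invoke Theorem~\ref{thm:elementary-cayley} when $p\mid\binom dj$ and Theorem~\ref{thm:cayley-p} when $p\nmid\binom dj$.
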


\begin{proof}
Identify $\mathcal A$ with $\F_q$. Then $H_j(d,q)$ is a Cayley graph on the elementary abelian $p$-group $\F_q^d$, and its connection set consists of the vectors of Hamming weight $j$. Hence its regularity is
\[
  \binom dj(q-1)^j.
\]
Since $p\nmid q-1$, this regularity is divisible by $p$ exactly when $p\mid\binom dj$. If $p\mid\binom dj$, Theorem~\ref{thm:elementary-cayley} gives an $\F_q^d$-distance magic labeling. If $p\nmid\binom dj$, Theorem~\ref{thm:cayley-p} gives the remaining assertions.
\end{proof}

For a positive integer $n$, write
\[
  \rad(n)=\prod_{p\mid n}p
\]
for the product of its distinct prime divisors. The preceding corollary gives an exact criterion for all Hamming relations when $q$ is a prime power. For arbitrary $q$, the following necessary condition holds.

\begin{theorem}\label{thm:hamming-screen}
Let $q\ge2$ and $1\le j\le d$. If $H_j(d,q)$ admits a $\Gamma$-distance magic labeling for an abelian group $\Gamma$ of order $q^d$, then
\[
  \rad(q)\mid\binom dj.
\]
\end{theorem}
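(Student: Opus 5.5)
Fix a prime $p\mid q$ and write $q=p^a m$ with $p\nmid m$. We will show that if $p\nmid\binom dj$, then $H_j(d,q)$ has no affinely generating $\Gamma$-magic map for any abelian group $\Gamma$ of order $q^d$. Since every $\Gamma$-distance magic labeling is affinely generating by Proposition~\ref{prop:affine-basic}\emph{(e)}, the contrapositive gives $p\mid\binom dj$ for every prime $p\mid q$, that is, $\rad(q)\mid\binom dj$.

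The approach is to lower the modulus to $p$. Let $f\colon V\to\Gamma$ be affinely generating and magic. Since $p\mid q^d=|\Gamma|$, the $\F_p$-space $\operatorname{Hom}(\Gamma,\F_p)$ is nonzero. Lemma~\ref{lem:affine-param}\emph{(a)} with $m=p$ and $H=\Gamma$ embeds it into $\ker\overline A_p$; note that part (a) places no divisibility condition on $m$. It therefore suffices to prove
\[
  \ker\overline A_p=0 \quad\text{whenever } p\nmid\tbinom dj .
\]

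To compute this kernel, identify $\mathcal A$ with the abelian group $Q=\Z/q\Z$ and write $Q\cong Q_p\oplus Q'$, where $Q_p$ is the $p$-part, of order $p^a$, and $|Q'|=m$. Then $H_j(d,q)=\operatorname{Cay}(Q^d,S)$, where $S$ is the set of vectors of Hamming weight $j$. Working in $\F_p[Q^d]\cong\F_p[Q_p^d]\otimes\F_p[Q'^d]$, the operator $A$ is multiplication by $\sigma_S$ (Lemma~\ref{lem:cayley-adjacency}). Because $p\nmid|Q'^d|$, the algebra $\F_p[Q'^d]$ is semisimple and splits over $\overline{\F}_p$ into characters $\psi$ of $Q'^d$. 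The same holds for $\overline A_p$, since the constants form an eigenline. For each such $\psi=(\psi_1,\dots,\psi_d)$, the $\psi$-component of $\sigma_S$ is the element of $\overline{\F}_p[Q_p^d]$ obtained by replacing each coordinate factor $\sum_{x\ne0}x$ with $\sum_{x\ne 0}\psi_i(x')\,x$. By Lemma~\ref{lem:augmentation}, which remains valid over $\overline{\F}_p$ because the augmentation ideal is still nilpotent, this component is a unit exactly when its augmentation is nonzero. That augmentation is the $j$th elementary symmetric polynomial $e_j(\lambda_1,\dots,\lambda_d)$, where
\[
  \lambda_i=\sum_{x\in Q,\,x\ne0}\psi_i(x')=p^a\,[\psi_i=1]\,m-1 .
\]
Reduced mod $p$, this gives $\lambda_i\equiv -1$ when $a\ge1$, whatever $\psi_i$ is. Hence every augmentation equals $(-1)^j\binom dj\ne0$ in $\F_p$, so $A$ is invertible over $\overline{\F}_p$.

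The argument of Theorem~\ref{thm:cayley-p} then finishes the proof. We have $A\mathbf1=k\mathbf1$ with $k=\binom dj(q-1)^j\not\equiv0\pmod p$, so $A$ invertible implies that $\overline A_p$ is injective, and hence $\ker\overline A_p=0$.

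The main obstacle is the character decomposition for the non-$p$ part $Q'$ when $q$ is not a prime power: one must check that the augmentation computation is correct. The key point is that $\sum_{x\ne0}\psi(x')$ counts $p^a$ copies of each element of $Q'$, so it vanishes mod $p$ unless $\psi=1$, and even then it is a multiple of $p$. A cruder alternative is to map $Q\to Q_p$, but this does not obviously respect $A$. For that reason we keep the tensor decomposition and verify the eigenvalue formula directly.
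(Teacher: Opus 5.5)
Your proof is correct, and its outer frame matches the paper's. You reduce to $\ker\overline A_p=0$ via Lemma~\ref{lem:affine-param}\emph{(a)} with $m=p$, which indeed needs no divisibility hypothesis. You then pass from invertibility of $A$ over $\F_p$ to injectivity of $\overline A_p$ using $k\equiv(-1)^j\binom dj\not\equiv0\pmod p$.

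The difference is in how you show that $A$ is invertible modulo $p$. The paper quotes the spectrum of the Hamming scheme: the eigenvalues are the Krawtchouk values $K_j(i)$, and Vandermonde's identity gives $K_j(i)\equiv(-1)^j\binom dj\pmod p$ for every $i$. Hence $\det A$, the product of these integers with multiplicities, is prime to $p$. You instead stay in the modular group algebra. You write $\sigma_S=e_j(\tau_1,\ldots,\tau_d)$, split $\overline{\F}_p[Q'^d]$ into characters, and apply the nilpotent-augmentation-ideal argument of Lemma~\ref{lem:augmentation} to each component $\overline{\F}_p[Q_p^d]$. Every augmentation equals $e_j(-1,\ldots,-1)=(-1)^j\binom dj$ because $\lambda_i=p^am[\psi_i=1]-1\equiv-1$.

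The paper's route is shorter but rests on the externally cited Krawtchouk spectrum. Yours is self-contained given Lemmas~\ref{lem:augmentation} and~\ref{lem:cayley-adjacency}, and it extends the mechanism of Theorem~\ref{thm:cayley-p} from $p$-groups to arbitrary finite abelian groups. It also shows more generally that, for a Cayley graph on $Q_p^d\times Q'^d$, the operator $A$ is invertible modulo $p$ exactly when $\sum_{s\in S}\psi(s')\neq0$ in $\overline{\F}_p$ for every character $\psi$ of $Q'^d$. So only the image of $S$ in the $p'$-part matters.

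One slip in your closing paragraph: it is the full sum $\sum_{x\in Q}\psi(x')$, not $\sum_{x\ne0}\psi(x')$, that is divisible by $p$. The latter is $-1$ modulo $p$, as your displayed formula for $\lambda_i$ correctly states.
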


\begin{proof}
The adjacency eigenvalues of $H_j(d,q)$ are the $q$-ary Krawtchouk values
\[
  K_j(i)=\sum_{h=0}^j(-1)^h(q-1)^{j-h}
  \binom ih\binom{d-i}{j-h},
  \qquad 0\le i\le d,
\]
with multiplicities $\binom di(q-1)^i$~\cite{BCN89}. Let $p$ be a prime divisor of $q$. Since $q-1\equiv-1\pmod p$, Vandermonde's identity gives
\[
\begin{aligned}
  K_j(i)
  &\equiv (-1)^j\sum_{h=0}^j
  \binom ih\binom{d-i}{j-h}\\
  &=(-1)^j\binom dj
  \pmod p
\end{aligned}
\]
for every $0\le i\le d$.

Suppose that $p\nmid\binom dj$. Then every adjacency eigenvalue is nonzero modulo $p$, so $\det A\not\equiv0\pmod p$ and $A$ is invertible over $\F_p$. The graph is regular of degree
\[
  k=\binom dj(q-1)^j,
\]
and
\[
  k\equiv(-1)^j\binom dj\not\equiv0\pmod p.
\]
Hence $A$ acts invertibly on the constant subspace $\F_p\mathbf1$. Since this subspace is invariant, the induced operator $\overline A_p$ on the quotient is invertible as well.

Since $p\mid|\Gamma|$, the vector space $\operatorname{Hom}(\Gamma,\F_p)$ is nonzero. Lemma~\ref{lem:affine-param}\emph{(a)} with $m=p$ gives an injection of this nonzero space into $\ker\overline A_p$, a contradiction. Thus $p\mid\binom dj$ for every prime divisor $p$ of $q$, and therefore $\rad(q)\mid\binom dj$.
\end{proof}

For $d\ge1$, write $Q_d=H_1(d,2)$ for the $d$-dimensional hypercube. For $d\ge2$, the graph $H_2(d,2)$ has two connected components, induced respectively by the binary vectors of even and odd Hamming weight. These components are isomorphic; either one is called the \emph{halved $d$-cube} and is denoted by $\frac12Q_d$.

\begin{corollary}\label{cor:hamming-special-families}
The following consequences hold.
\begin{enumerate}[label=\emph{(\alph*)},leftmargin=2.2em]
\item Let $d\ge2$. The halved cube $\frac12Q_d$ admits a $\Gamma$-distance magic labeling for some abelian group $\Gamma$ of order $2^{d-1}$ if and only if $d\equiv0$ or $1\pmod4$.

\item For $q\ge2$, the rook graph
\[
  R_q=K_q\square K_q=H_1(2,q)
\]
admits a $\Gamma$-distance magic labeling for some abelian group $\Gamma$ of order $q^2$ if and only if $q$ is a power of $2$.

\item For $q\ge2$, the complement of the rook graph satisfies
\[
  \overline{R_q}=H_2(2,q)
\]
and admits no $\Gamma$-distance magic labeling for any abelian group $\Gamma$ of order $q^2$.

\item If $d\ge2$ is even, then the orthogonality graph
\[
  \Omega_d=H_{d/2}(d,2)
\]
admits a $\Gamma$-distance magic labeling for some abelian group $\Gamma$ of order $2^d$.
\end{enumerate}
\end{corollary}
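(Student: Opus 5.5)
Each part of Corollary~\ref{cor:hamming-special-families} reduces to Corollary~\ref{cor:hamming-relations-exact} (for prime-power $q$) or Theorem~\ref{thm:hamming-screen} (for general $q$). In every case the work is to identify the graph as a Hamming relation graph, or as a component of one, and then evaluate the relevant binomial coefficient modulo the relevant prime.

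For part~(a), the graph $H_2(d,2)$ is not connected, so Corollary~\ref{cor:hamming-relations-exact} does not apply verbatim. Instead I would realize $\frac12Q_d$ directly as a Cayley graph on the even-weight subgroup $E\le\F_2^d$. This subgroup is elementary abelian of order $2^{d-1}$, and the connection set is the set of weight-$2$ vectors, which all lie in $E$. The regularity is therefore $\binom d2$. Theorem~\ref{thm:elementary-cayley} with $p=2$ gives existence exactly when $2\mid\binom d2=d(d-1)/2$, that is, when $4\mid d(d-1)$. Since $d$ and $d-1$ have opposite parity, this holds iff $4\mid d$ or $4\mid d-1$, i.e.\ $d\equiv0,1\pmod4$. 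For $d=2$ the component is $K_2$ with $\binom22=1$, consistent with the criterion.

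For part~(b), $R_q=H_1(2,q)$ has $\binom21=2$.
\begin{itemize}
\item If $q=2^a$, Corollary~\ref{cor:hamming-relations-exact} with $p=2$ gives existence, since $2\mid2$.
\item If $q$ is not a power of $2$, it has an odd prime divisor, so $\rad(q)\nmid2$, and Theorem~\ref{thm:hamming-screen} excludes every $\Gamma$.
\end{itemize}

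For part~(c), $\overline{R_q}$ is $H_2(2,q)$: two vectors of length $2$ are nonadjacent in the rook graph exactly when they differ in both coordinates. Here $\binom22=1$, and $\rad(q)\ge2$ cannot divide $1$, so Theorem~\ref{thm:hamming-screen} excludes every $\Gamma$.

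For part~(d), $\Omega_d=H_{d/2}(d,2)$, and I need $2\mid\binom{d}{d/2}$ for even $d\ge2$. This follows from the symmetry $\binom{d}{d/2}=\binom{d-1}{d/2-1}+\binom{d-1}{d/2}=2\binom{d-1}{d/2-1}$, or alternatively from Kummer's theorem. Corollary~\ref{cor:hamming-relations-exact} then applies with $q=2$.

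The only step requiring care is part~(a). The Hamming-relation corollaries concern $H_2(d,2)$, which has order $2^d$, not its component of order $2^{d-1}$. The obstacle is therefore to justify applying Theorem~\ref{thm:elementary-cayley} directly to the component, viewed as a Cayley graph on the subgroup $E$, rather than to $H_2(d,2)$ itself.
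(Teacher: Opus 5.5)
Your proposal is correct and follows the route the paper indicates. Part~(a) uses Theorem~\ref{thm:elementary-cayley} on the even-weight subgroup. The step you flag as the obstacle is in fact already settled by your own identification: the even-weight component is exactly $\operatorname{Cay}(E,S)$, where $S$ is the set of weight-$2$ vectors. Parts~(b) and~(d) use Corollary~\ref{cor:hamming-relations-exact} when $q$ is a power of $2$, and the nonexistence statements in~(b) and~(c) use Theorem~\ref{thm:hamming-screen}.
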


The verification is immediate from Theorem~\ref{thm:elementary-cayley}, Corollary~\ref{cor:hamming-relations-exact}, and Theorem~\ref{thm:hamming-screen}, and is left to the reader.

\begin{remark}\label{rem:prescribed-gamma}
Corollaries~\ref{cor:hamming-relations-exact} and~\ref{cor:hamming-special-families} classify existence after the labeling group is allowed to vary over all abelian groups of the graph order. In each feasible case the elementary abelian Cayley group supplies a labeling. They do not give a sufficiency criterion for an arbitrary prescribed labeling group.

The hypercube is a family for which the prescribed-group problem is known completely. Anholcer, Cichacz, Froncek, Simanjuntak, and Qiu proved that, for every abelian group $\Gamma$ of order $2^d$,
\[
  Q_d\text{ is }\Gamma\text{-distance magic}
  \quad\Longleftrightarrow\quad
  d\text{ is even}
\]
\cite{ACFSQ21}. When $d$ is odd, the nonexistence direction also follows from Theorem~\ref{thm:cayley-p}, because $Q_d$ is the Cayley graph on $(\Z/2\Z)^d$ with connection set given by the standard basis and hence has regularity $d$.
\end{remark}

Theorem~\ref{thm:cayley-p} also gives restrictions for several further Cayley families. We first fix the notation and regularities needed for the standard forms graphs.

Let $q$ be a prime power. For positive integers $d$ and $e$, we write $\operatorname{Bil}(d,e;q)$ for the graph whose vertices are the $d\times e$ matrices over $\F_q$, with two matrices adjacent when their difference has rank $1$. For $n\ge2$, the graph $\operatorname{Alt}(n,q)$ has as vertices the alternating $n\times n$ matrices over $\F_q$, with adjacency when the difference has rank $2$. For $n\ge1$, the graph $\operatorname{Her}(n,q^2)$ has as vertices the Hermitian $n\times n$ matrices over $\F_{q^2}$, with adjacency when the difference has rank $1$.
For $n\ge1$, the graph $\operatorname{Sym}(n,q)$ has as vertices the symmetric $n\times n$ matrices over $\F_q$, with adjacency when the difference has rank $1$. Finally, for $n\ge1$, the graph $\operatorname{Qua}(n,q)$ has as vertices the quadratic forms in $n$ variables over $\F_q$, with two forms adjacent when the rank of their difference is $1$ or $2$. These graphs are Cayley graphs on the additive groups of their respective vertex spaces. The following table gives their regularities. See~\cite[Sections~9.5--9.6]{BCN89} for the definitions and parameter data.

\begin{center}
\footnotesize
\setlength{\tabcolsep}{4pt}
\renewcommand{\arraystretch}{1.22}
\begin{tabular}{@{}ccccc@{}}
\toprule
$\operatorname{Bil}(d,e;q)$ & $\operatorname{Alt}(n,q)$ & $\operatorname{Her}(n,q^2)$ & $\operatorname{Sym}(n,q)$ & $\operatorname{Qua}(n,q)$\\
\midrule
$\displaystyle\frac{(q^d-1)(q^e-1)}{q-1}$ &
$\displaystyle\frac{(q^n-1)(q^{n-1}-1)}{q^2-1}$ &
$\displaystyle\frac{q^{2n}-1}{q+1}$ &
$q^n-1$ &
$\displaystyle\frac{(q^{n+1}-1)(q^n-1)}{q^2-1}$\\
\bottomrule
\end{tabular}
\end{center}

\begin{corollary}\label{cor:cayley-families}
Let $q=p^a$ be a prime power. For each graph $G$ in the following list and every abelian group $\Gamma$ of order $|V(G)|$, the graph $G$ admits no affinely generating $\Gamma$-magic map and no $\Gamma$-distance magic labeling. Moreover, $G$ admits a generating $\Gamma$-magic map if and only if $\Gamma$ is cyclic.
\begin{enumerate}[label=\emph{(\alph*)},leftmargin=2.2em]
\item An undirected generalized Paley graph $\operatorname{Cay}(\F_q^+,H)$ with $H\le\F_q^\times$ and $-H=H$.
\item One of the forms graphs $\operatorname{Bil}(d,e;q)$, $\operatorname{Alt}(n,q)$, $\operatorname{Her}(n,q^2)$, $\operatorname{Sym}(n,q)$, or $\operatorname{Qua}(n,q)$ defined above.
\item The Doob graph $D(m,n)$, the Cartesian product of $m$ Shrikhande graphs and $n$ copies of $K_4$, where $m$ is a nonnegative integer and $n$ is a positive odd integer.
\end{enumerate}
\end{corollary}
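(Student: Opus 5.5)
The plan is to reduce every case to Theorem~\ref{thm:cayley-p}. For each listed graph $G$ we will check three things: $G$ is a simple undirected Cayley graph $\operatorname{Cay}(P,S)$ on a nontrivial finite $p$-group $P$; its regularity $k=|S|$ satisfies $p\nmid k$; and $|V(G)|=|P|$. Theorem~\ref{thm:cayley-p} then gives all conclusions at once, for every abelian group $\Gamma$ of order $|V(G)|$: no affinely generating $\Gamma$-magic map, no $\Gamma$-distance magic labeling, and a generating one exactly when $\Gamma$ is cyclic. The argument is therefore a case-by-case verification of the hypotheses, with the regularity computation modulo $p$ as the only real content.

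For part~\emph{(a)}, take $P=\F_q^+$, which is elementary abelian of order $q=p^a$, and $S=H$. The set $H$ avoids $0$, and $-H=H$ makes it inverse-closed, so the graph is simple and undirected. Its regularity is $|H|$, which divides $|\F_q^\times|=q-1$. Hence $|H|$ is coprime to $p$.

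For part~\emph{(b)}, each forms graph is a Cayley graph on the additive group of its vertex space. In every case this space is a finite-dimensional $\F_q$-vector space, hence a nontrivial $p$-group. For $\operatorname{Her}(n,q^2)$ we use that Hermitian matrices form an $\F_q$-space, and for $\operatorname{Alt}(n,q)$ we use $n\ge2$. The connection set consists of the elements of the prescribed rank, which is closed under negation and excludes $0$. For the regularities we reduce the table entries modulo $p$ using $q\equiv0\pmod p$. If $N$ is an integer with $N\cdot D=U$, where $U\equiv\pm1$ and $D\equiv\pm1\pmod p$, then $N\equiv\pm1\pmod p$. Applying this to each entry:
\begin{itemize}
\item for $\operatorname{Bil}(d,e;q)$, we have $(q^d-1)(q^e-1)\equiv1$ and $q-1\equiv-1$, so $k\equiv-1$;
\item for $\operatorname{Alt}(n,q)$ and $\operatorname{Qua}(n,q)$, the numerator is $\equiv1$ and $q^2-1\equiv-1$, so $k\equiv-1$;
\item for $\operatorname{Her}(n,q^2)$, we have $q^{2n}-1\equiv-1$ and $q+1\equiv1$, so $k\equiv-1$;
\item for $\operatorname{Sym}(n,q)$, we have $k=q^n-1\equiv-1$ directly.
\end{itemize}
Thus $p\nmid k$ in every case. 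The main care point in the whole proof is this step: divisibility must be handled via the integer identity $N\cdot D=U$ rather than by dividing naively modulo $p$, and the vertex sets must be confirmed to be $p$-groups.

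For part~\emph{(c)}, take $p=2$. The Shrikhande graph is $\operatorname{Cay}\bigl((\Z/4\Z)^2,\{\pm(1,0),\pm(0,1),\pm(1,1)\}\bigr)$, which is $6$-regular, and $K_4=\operatorname{Cay}\bigl((\Z/2\Z)^2,(\Z/2\Z)^2\setminus\{0\}\bigr)$, which is $3$-regular. A Cartesian product of Cayley graphs is the Cayley graph on the direct product, with connection set the union of the factor connection sets embedded in their coordinates. Hence $D(m,n)$ is a Cayley graph on the $2$-group $(\Z/4\Z)^{2m}\oplus(\Z/2\Z)^{2n}$, which is nontrivial since $n\ge1$. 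Its regularity is $6m+3n$, which is odd because $n$ is odd. Theorem~\ref{thm:cayley-p} applies in all three cases.
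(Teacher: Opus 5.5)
Your proposal is correct and follows the paper's proof: in each case you realize the graph as a Cayley graph on a nontrivial $p$-group whose regularity is prime to $p$, then apply Theorem~\ref{thm:cayley-p}. The regularities are a divisor of $q-1$ for the generalized Paley graphs, $\equiv -1 \pmod p$ for the forms graphs, and the odd number $6m+3n$ for the Doob graphs. The integer identity $N\cdot D=U$ and the check that each vertex space is a nontrivial $p$-group only make explicit what the paper's shorter argument leaves implicit.
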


\begin{proof}
In part~\emph{(a)}, the generalized Paley graph is a Cayley graph on $\F_q^+$ of regularity $|H|$, a divisor of $q-1$, hence prime to $p$.

For part~\emph{(b)}, the table above gives the regularity of each forms graph. Since $q\equiv0\pmod p$, while $q-1$, $q+1$, and $q^2-1$ are nonzero modulo $p$, every displayed regularity is congruent to $-1$ modulo $p$, and hence is prime to $p$.

Finally, $D(m,n)$ has an abelian $2$-group Cayley realization and regularity $6m+3n$~\cite{SHK19}, which is odd when $n$ is odd. In every case Theorem~\ref{thm:cayley-p} applies.
\end{proof}

\subsection{Strongly regular graphs}

A strongly regular graph with parameters $(v,k,\lambda,\mu)$ has $v$ vertices, is $k$-regular, every two adjacent vertices have $\lambda$ common neighbors, and every two nonadjacent vertices have $\mu$ common neighbors.
The case $k=\mu$ has a standard structural interpretation. A noncomplete strongly regular graph satisfies $k=\mu$ if and only if it is a complete multipartite graph with all parts of the same size. Write such a graph as $K_{m,\ldots,m}$, with $r$ parts, where $m,r\ge2$. The existence of group distance magic labelings for this family is completely determined. For an abelian group $\Gamma$ of order $rm$, the graph $K_{m,\ldots,m}$ is $\Gamma$-distance magic if and only if $m$ is even or $\Gamma$ does not have exactly one involution; consequently, it is group distance magic if and only if $m$ is even or $r$ is odd~\cite{Cic18}. We therefore restrict attention in this subsection to the case $k\ne\mu$.

We write $I$ and $J$ for the identity and all-one matrices of order $v$, respectively. The adjacency matrix of a strongly regular graph satisfies~\cite[Section~1.1.1]{BvM22}
\[
  A^2=(\lambda-\mu)A+(k-\mu)I+\mu J.
\]
Hence
\begin{equation}\label{eq:srg-relation}
  \overline A^2-(\lambda-\mu)\overline A-(k-\mu)I=0
\end{equation}
on $\Lamone$.

\begin{theorem}\label{thm:srg}
Let $G$ be strongly regular with parameters $(v,k,\lambda,\mu)$, where $k\ne\mu$, and let $\Gamma$ be a finite abelian group with invariant factors
\[
  d_1\mid\cdots\mid d_r.
\]
The following necessary conditions hold.
\begin{enumerate}[label=\emph{(\alph*)},leftmargin=2.2em]
\item If $G$ admits an affinely generating $\Gamma$-magic map, then
\[
  d_r=\ex(\Gamma)\mid k-\mu.
\]
\item If $r\ge2$ and $G$ admits a generating $\Gamma$-magic map, then
\[
  d_{r-1}\mid k-\mu.
\]
\end{enumerate}
In particular, if $|\Gamma|=v$ and $G$ admits a $\Gamma$-distance magic labeling, then
\[
  \ex(\Gamma)\mid k-\mu.
\]
\end{theorem}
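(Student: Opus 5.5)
The plan is to work directly with the magic maps and the strongly regular identity, rather than going through the Smith group. This way no nonsingularity assumption on $\overline A$ is needed.

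For part (a), let $f\colon V(G)\to\Gamma$ be an affinely generating $\Gamma$-magic map with magic constant $\gamma$. Put $m=d_r$ and $H^*=\operatorname{Hom}(\Gamma,\Z/m\Z)$. By Lemma~\ref{lem:affine-param}(a), for each $\chi\in H^*$ the vector $x=\chi\circ f$ satisfies $A_mx=\chi(\gamma)\mathbf1$. Hence $A_m^2x=k\chi(\gamma)\mathbf1$ is also constant. Reduce the relation
\[
  A^2=(\lambda-\mu)A+(k-\mu)I+\mu J
\]
modulo $m$ and apply it to $x$. This gives
\[
  (k-\mu)x=A_m^2x-(\lambda-\mu)A_mx-\mu Jx,
\]
and every term on the right is a constant vector, since $Jx$ is constant. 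Therefore $(k-\mu)[x]=0$ in the quotient by constants, which is just \eqref{eq:srg-relation} reduced modulo $m$ on the kernel. Equivalently, $(k-\mu)\chi\circ f$ is constant, so $\chi$ vanishes on $(k-\mu)(f(v)-f(w))$ for all $v,w$. Since this holds for every $\chi$, Lemma~\ref{lem:duality} gives $(k-\mu)(f(v)-f(w))=0$ in $\Gamma$ for all $v,w$. The differences generate $D(f)=\Gamma$, so $(k-\mu)\Gamma=0$, that is, $\ex(\Gamma)=d_r\mid k-\mu$. (One can phrase this without characters: apply the identity directly to $f$ as a $\Gamma$-valued vector, i.e.\ compute $\sum_u (A^2)_{vu}f(u)$ in two ways. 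Both sides are constant in $v$ except the term $(k-\mu)f(v)$, which forces $(k-\mu)f(v)$ to be constant.) The direct $\Gamma$-valued computation is probably the cleanest route, and I would present it that way.

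For part (b), use Lemma~\ref{lem:generating-affine}. A generating $\Gamma$-magic map yields a subgroup $H\le\Gamma$ with $\Gamma/H$ cyclic and an affinely generating $H$-magic map. Part (a) applied to $H$ gives $(k-\mu)H=0$. Lemma~\ref{lem:cyclic-quotient} gives $\BG\hookrightarrow H$, so $(k-\mu)\BG=0$, and $\ex(\BG)=d_{r-1}$ when $r\ge2$. Hence $d_{r-1}\mid k-\mu$.

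The final assertion follows because a $\Gamma$-distance magic labeling is affinely generating by Proposition~\ref{prop:affine-basic}(e), so part (a) applies. The only point needing care is where $k\ne\mu$ is used. It is not needed for the divisibility itself. It only ensures the conclusion is non-vacuous: if $k=\mu$, then $k-\mu=0$ and divisibility by any $d_r$ holds trivially. So this step is routine, and I expect no real obstacle beyond bookkeeping of the constant terms in the $\Gamma$-valued identity.
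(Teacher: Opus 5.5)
Your proof is correct, but it takes a different route from the paper.

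\textbf{The paper's route.} The paper first uses \eqref{eq:srg-relation} and $k\ne\mu$ to show that $\overline A$ is nonsingular over $\mathbb Q$, so that $\Sred(G)$ is finite. It then passes the relation to the cokernel to obtain $(k-\mu)\Sred(G)=0$. Parts (a) and (b) are read off from the embeddings $\Gamma\hookrightarrow\Sred(G)$ and $\BG\hookrightarrow\Sred(G)$ supplied by Corollary~\ref{cor:main-smith}.

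\textbf{Your route.} You apply the integral identity $A^2=(\lambda-\mu)A+(k-\mu)I+\mu J$ directly to the $\Gamma$-valued vector $f$. This shows that $(k-\mu)f(v)=k\gamma-(\lambda-\mu)\gamma-\mu\sum_u f(u)$ does not depend on $v$, hence $(k-\mu)D(f)=0$ for every $\Gamma$-magic map $f$. Part (a) follows from $D(f)=\Gamma$, and part (b) follows from Lemmas~\ref{lem:generating-affine} and~\ref{lem:cyclic-quotient}. In effect you evaluate the annihilating relation on the kernel side, while the paper evaluates it on the cokernel side; Lemma~\ref{lem:modular-kernel} links the two.

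\textbf{What each buys.} Your argument is more elementary and self-contained: it needs neither nonsingularity of $\overline A$ nor Smith normal forms. You also correctly observe that $k\ne\mu$ is irrelevant to the truth of the divisibilities. The paper needs that hypothesis only to make $\Sred(G)$ finite so that Corollary~\ref{cor:main-smith} applies.

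The paper's argument yields a statement about the graph alone, $\ex(\Sred(G))\mid k-\mu$, which bounds all labeling groups at once. It also follows a uniform template: find an annihilating polynomial of $\overline A$, and its constant term kills the cokernel. The paper reuses this template for incidence graphs of symmetric designs, where it additionally gives $|\Sred(G)|=k(k-\lambda)^{v-1}$.
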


\begin{proof}
Suppose first that $\overline A x=0$ in $\Lamone\otimes\mathbb Q$. Substituting into \eqref{eq:srg-relation} gives
\[
  (k-\mu)x=0.
\]
Since $k\ne\mu$, one has $x=0$. Hence $\overline A$ is nonsingular over $\mathbb Q$ and $\Sred(G)$ is finite.

Now pass relation~\eqref{eq:srg-relation} to the cokernel of $\overline A$. The terms containing $\overline A$ vanish there, so
\[
  (k-\mu)\Sred(G)=0.
\]
Thus the exponent of every subgroup of $\Sred(G)$ divides $k-\mu$. Corollary~\ref{cor:main-smith}\emph{(b)} embeds $\Gamma$ into $\Sred(G)$ under affine generation, giving part~\emph{(a)}. Corollary~\ref{cor:main-smith}\emph{(a)} embeds $\BG$ under ordinary generation, and $\ex(\BG)=d_{r-1}$ when $r\ge2$, giving part~\emph{(b)}.
The final assertion follows from part~\emph{(a)} because every distance magic labeling is affinely generating.
\end{proof}

As a consequence, if $p\mid v$ and $p\nmid(k-\mu)$, no abelian group $\Gamma$ of order $v$ admits an affinely generating $\Gamma$-magic map or a $\Gamma$-distance magic labeling.

We use the following standard terminology. A \emph{conference graph} with parameter $t$ is a strongly regular graph with parameters $(4t+1,2t,t-1,t)$. Let $V=\F_2^{2\nu}$, and fix a nondegenerate alternating bilinear form $\langle\cdot,\cdot\rangle$ on $V$. The \emph{symplectic graph} $\operatorname{Sp}(2\nu,2)$ has vertex set $V\setminus\{0\}$, with distinct vertices $x$ and $y$ adjacent when $\langle x,y\rangle=1$. It is strongly regular with parameters $(2^{2\nu}-1,2^{2\nu-1},2^{2\nu-2},2^{2\nu-2})$. A \emph{Moore graph of diameter two and degree $k$} is a $k$-regular graph of diameter $2$ and girth $5$. Equivalently, it is strongly regular with parameters $(k^2+1,k,0,1)$. See~\cite{BvM22} for these families.

The following three families admit a stronger simplification of Theorem~\ref{thm:srg}.

\begin{corollary}\label{cor:srg-cyclic}
For a conference graph with parameters $(4t+1,2t,t-1,t)$ and $t\ge1$, a symplectic graph $\operatorname{Sp}(2\nu,2)$ with $\nu\ge2$, or a Moore graph of diameter two and degree $k\ge2$, and for every abelian group $\Gamma$ of the graph order, there is no affinely generating $\Gamma$-magic map and hence no $\Gamma$-distance magic labeling, while a generating $\Gamma$-magic map exists if and only if $\Gamma$ is cyclic.
\end{corollary}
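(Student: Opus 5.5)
The plan is to use Theorem~\ref{thm:srg} together with Theorem~\ref{thm:main-characterization}. For each family, I will show that $k-\mu$ is prime to the graph order $v$, or at least that $\gcd(v,k-\mu)=1$. Suppose $\Gamma$ has order $v$ and admits an affinely generating $\Gamma$-magic map. Theorem~\ref{thm:srg}(a) gives $\ex(\Gamma)\mid k-\mu$, and $\ex(\Gamma)$ divides $|\Gamma|=v$. Hence $\ex(\Gamma)\mid\gcd(v,k-\mu)=1$, so $\Gamma$ is trivial. This contradicts $v\ge2$. The absence of $\Gamma$-distance magic labelings then follows from Proposition~\ref{prop:affine-basic}(e).

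For the generating assertion, the cyclic case is immediate. By regularity, the constant map whose value is a generator of $\Gamma$ is magic and generating; equivalently, $\BG=0$ embeds trivially in Theorem~\ref{thm:main-characterization}(a). For noncyclic $\Gamma$, that is $r\ge2$, Theorem~\ref{thm:srg}(b) gives $d_{r-1}\mid k-\mu$. Since $d_{r-1}\mid v$, we get $d_{r-1}\mid\gcd(v,k-\mu)=1$, contradicting $d_{r-1}>1$. So a generating $\Gamma$-magic map exists exactly when $\Gamma$ is cyclic.

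The remaining step is the arithmetic check in each family. All three families have $k\ne\mu$, as Theorem~\ref{thm:srg} requires, and the gcd condition holds in each.
\begin{itemize}
\item Conference graph: $v=4t+1$ and $k-\mu=t$, so $\gcd(4t+1,t)=\gcd(1,t)=1$. The requirement $k\ne\mu$ becomes $2t\ne t$, which holds since $t\ge1$.
\item $\operatorname{Sp}(2\nu,2)$: $v=2^{2\nu}-1$ is odd, and $k-\mu=2^{2\nu-1}-2^{2\nu-2}=2^{2\nu-2}$ is a power of $2$. So the gcd is $1$, and $k-\mu\ne0$.
\item Moore graph: $v=k^2+1$ and $k-\mu=k-1$. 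Since $k\equiv1\pmod{k-1}$, we have $k^2+1\equiv2\pmod{k-1}$, so $\gcd(v,k-1)\mid2$. Also $k-\mu=k-1\ne0$ because $k\ge2$.
\end{itemize}

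The main obstacle is the Moore case, where the gcd could equal $2$. That happens when $k$ is odd, since then both $k-1$ and $k^2+1$ are even. So I need a parity argument. First I would recall that the Hoffman--Singleton classification allows only $k\in\{2,3,7,57\}$. For $k=3,7,57$ the gcd is $2$, so the argument above shows only $\ex(\Gamma)\mid2$, and similarly $d_{r-1}\mid2$.

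To sharpen this, I would compute the $2$-part of $\Sred(G)$ more carefully. Relation~\eqref{eq:srg-relation} becomes $\overline A^2+\overline A-(k-1)I=0$. Reducing mod $2$ with $k$ odd gives $\overline A_2(\overline A_2+I)=0$. I would then try to show that $\ker\overline A_2$ is trivial, using the eigenvalue multiplicities or the explicit parameters. If that fails, I would handle $v=10,50,3250$ separately by noting $v_2(v)=1$. In that case the $2$-part of $\Gamma$ is $\Z/2\Z$, which rules out $r\ge2$ via $d_{r-1}\mid2$ and $d_{r-1}^2\mid v$. The case $\ex(\Gamma)\mid2$ would still need a separate argument.
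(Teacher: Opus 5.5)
Your arguments for conference graphs, symplectic graphs and even-degree Moore graphs are correct. The noncyclic case for odd-degree Moore graphs also works: $d_{r-1}=2$ forces $4\mid v$, while $k^2+1\equiv2\pmod 4$ for every odd $k$, so the Hoffman--Singleton theorem is not needed. The genuine gap is the one you flag yourself. For a Moore graph of odd degree $k$, Theorem~\ref{thm:srg}(a) only gives $\ex(\Gamma)\mid\gcd(k^2+1,k-1)\mid2$, and your exclusion of affinely generating maps is left unfinished.

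Your first proposed repair would also fail, because $\ker\overline A_2$ is not trivial in general. For the Petersen graph the reduced eigenvalues are $1$ (multiplicity $5$) and $-2$ (multiplicity $4$). Hence $|\Sred(G)|=16$ and $2\Sred(G)=0$, so $\Sred(G)\cong(\Z/2\Z)^4$ and $\ker\overline A_2\cong(\Z/2\Z)^4\neq0$. Similarly, the Hoffman--Singleton graph has $\ker\overline A_2\cong(\Z/2\Z)^{28}$. The obstruction therefore has to come from the order of $\Gamma$, not from the $2$-torsion of $\Sred(G)$.

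The missing idea is that $\ex(\Gamma)$ and $|\Gamma|$ have the same prime divisors. This holds because $|\Gamma|=d_1\cdots d_r$ with every $d_i\mid d_r=\ex(\Gamma)$. You only used $\ex(\Gamma)\mid v$, which discards exactly this information. With it, $\ex(\Gamma)\mid2$ forces $v=k^2+1$ to be a power of $2$. But for odd $k\ge3$ one has $k^2+1\equiv2\pmod 4$ and $k^2+1\ge10$, so $v$ has an odd prime factor $p$. Then $p\mid\ex(\Gamma)\mid k-1$, which contradicts $\gcd(k^2+1,k-1)\mid2$.

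This is how the paper argues, uniformly for all three families. Every prime divisor of $v$ divides $\ex(\Gamma)$ and hence $k-\mu$, so it divides $\gcd(v,k-\mu)$. For conference and symplectic graphs this gcd is $1$. For Moore graphs it forces $v$ to be a power of $2$, which $k^2+1$ never is for $k\ge2$.
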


\begin{proof}
Let $v$ denote the order of the graph.
For a conference graph, $\gcd(4t+1,t)=1$. For a symplectic graph $\operatorname{Sp}(2\nu,2)$ with $\nu\ge2$, one has $v=2^{2\nu}-1$ while $k-\mu=2^{2\nu-2}$. In both cases no prime divides both $v$ and $k-\mu$. Since $\ex(\Gamma)$ and $|\Gamma|=v$ have the same prime divisors, Theorem~\ref{thm:srg}\emph{(a)} excludes affine generation and hence distance magic labelings. If $\Gamma$ were noncyclic, then any prime divisor of $d_{r-1}>1$ would divide both $v$ and $k-\mu$, contradicting Theorem~\ref{thm:srg}\emph{(b)}.

For a Moore graph, $k-\mu=k-1$ and $\gcd(k^2+1,k-1)$ divides $2$. If affine generation existed, every prime divisor of $v=k^2+1$ would divide this greatest common divisor, so $k^2+1$ would be a power of $2$. This is impossible for $k\ge2$, since $k^2+1$ is odd for even $k$ and congruent to $2$ modulo $4$ for odd $k$. If ordinary generation existed over a noncyclic group, then a prime $p\mid d_{r-1}$ would divide $k-1$, and $d_{r-1}\mid d_r$ would give $p^2\mid k^2+1$. The displayed greatest common divisor forces $p=2$ and hence $4\mid k^2+1$, again impossible.

In all cases cyclic groups admit the constant-generator map.
\end{proof}

We now introduce the further standard families that appear, together with those of Corollary~\ref{cor:srg-cyclic}, in Table~\ref{tab:srg}. The \emph{triangular graph} $T(n)$ has the $2$-subsets of an $n$-element set as its vertices, with two vertices adjacent when they intersect. Equivalently, $T(n)$ is the line graph of $K_n$, and it is strongly regular with parameters $(\binom n2,2(n-2),n-2,4)$. The rook graph $R_n=K_n\square K_n$ was defined above and is strongly regular with parameters $(n^2,2(n-1),n-2,2)$.

A \emph{Steiner triple system} $STS(u)$ consists of a set of $u$ points and a collection of $3$-subsets, called blocks, such that every pair of points lies in exactly one block. Its \emph{block graph} has the blocks as vertices, with two blocks adjacent when they intersect. It is strongly regular with parameters $\left(\frac{u(u-1)}6,\frac{3(u-3)}2,\frac{u+3}2,9\right)$.

A strongly regular graph with parameter set $(n^2,s(n-1),n+s^2-3s,s(s-1))$ is said to be of \emph{Latin square type}. One with parameter set $(n^2,s(n+1),-n+s^2+3s,s(s+1))$ is said to be of \emph{negative Latin square type}.

A \emph{partial geometry} $pg(s,t,\alpha)$ is a point-line incidence structure in which every line contains $s+1$ points and every point lies on $t+1$ lines. Any two points lie on at most one line. If a point does not lie on a line, exactly $\alpha$ lines through the point meet that line. The \emph{point graph} has the points as vertices, with two vertices adjacent when they are collinear, and is strongly regular with parameters $\left(\frac{(s+1)(st+\alpha)}{\alpha},\ s(t+1),\ s-1+t(\alpha-1),\ \alpha(t+1)\right)$.

A \emph{generalized quadrangle} $GQ(s,t)$ is a partial geometry $pg(s,t,1)$. The generalized-quadrangle row below refers to its point graph. These definitions and parameter conventions are standard~\cite{BvM22}.

Let $\Gamma$ be an abelian group of the graph order with invariant factors $d_1\mid\cdots\mid d_r$. Theorem~\ref{thm:srg} gives the family-specific nonexistence conditions in Table~\ref{tab:srg}. The first three rows record Corollary~\ref{cor:srg-cyclic}. For the remaining families, the entries follow by substituting the corresponding value of $k-\mu$ into Theorem~\ref{thm:srg}. The verification of these entries is left to the reader. Since every $\Gamma$-distance magic labeling is affinely generating, every condition in the last column also rules out a $\Gamma$-distance magic labeling. The values of $k-\mu$ are obtained from the displayed parameter sets and the standard parameter sets in~\cite{BvM22}.

\begin{table}[ht]
\caption{\centering Nonexistence conditions for standard strongly regular families, where $\Gamma$ is an abelian group of the graph order.}
\label{tab:srg}
\footnotesize
\setlength{\tabcolsep}{2pt}
\renewcommand{\arraystretch}{1.5}
\begin{tabular}{@{}
>{\raggedright\arraybackslash}p{0.234\textwidth}
@{\hspace{7pt}}
>{\raggedright\arraybackslash}p{0.14\textwidth}
@{\hspace{7pt}}
>{\raggedright\arraybackslash}p{0.258\textwidth}
@{\hspace{11pt}}
>{\raggedright\arraybackslash}p{0.27\textwidth}@{}}
\toprule
Family & $k-\mu$ & No generating $\Gamma$-magic map if & No affinely generating $\Gamma$-magic map if\\
\midrule
Conference graph, $t\ge1$
  & $t$
  & $\Gamma$ is noncyclic
  & always\\
Symplectic graph, $\nu\ge2$
  & $2^{2\nu-2}$
  & $\Gamma$ is noncyclic
  & always\\
Moore graph of diameter $2$, $k\ge2$
  & $k-1$
  & $\Gamma$ is noncyclic
  & always\\
Triangular graph $T(n)$, $n\ge5$
  & $2(n-4)$
  & $r\ge2$ and $d_{r-1}\nmid2(n-4)$
  & $d_r\nmid2(n-4)$\\
Rook graph $R_n=K_n\square K_n$, $n\ge3$
  & $2(n-2)$
  & $r\ge2$ and $d_{r-1}\nmid2(n-2)$
  & $d_r\nmid2(n-2)$\\
Block graph of an $STS(u)$, $u\ge13$
  & $\dfrac{3(u-9)}2$
  & $r\ge2$ and $d_{r-1}\nmid\dfrac{3(u-9)}2$
  & $d_r\nmid\dfrac{3(u-9)}2$\\
Latin square type, $s\ne n$
  & $s(n-s)$
  & $r\ge2$ and $d_{r-1}\nmid s(n-s)$
  & $d_r\nmid s(n-s)$\\
Negative Latin square type, $s\ne n$
  & $s(n-s)$
  & $r\ge2$ and $d_{r-1}\nmid s(n-s)$
  & $d_r\nmid s(n-s)$\\
Point graph of a $pg(s,t,\alpha)$, $s\ne\alpha$
  & $(s-\alpha)(t+1)$
  & $r\ge2$ and $d_{r-1}\nmid(s-\alpha)(t+1)$
  & $d_r\nmid(s-\alpha)(t+1)$\\
Generalized quadrangle $GQ(s,t)$, $s>1$
  & $(s-1)(t+1)$
  & $r\ge2$ and $d_{r-1}\nmid(s-1)(t+1)$
  & $d_r\nmid(s-1)(t+1)$\\
\bottomrule
\end{tabular}
\end{table}

\subsection{Incidence graphs of symmetric designs}

For background on symmetric designs and their incidence matrices, see~\cite[Section~8.3]{BvM22}. A symmetric $2$-$(v,k,\lambda)$ design has $v$ points and $v$ blocks, every block contains $k$ points, every point lies in $k$ blocks, and every pair of distinct points lies in exactly $\lambda$ blocks. Let $\mathcal D$ be a nontrivial symmetric $2$-$(v,k,\lambda)$ design with incidence matrix $B$. Its incidence graph is the bipartite graph on the point and block sets, with adjacency given by incidence, and has adjacency matrix
\[
  A=\begin{pmatrix}0&B\\B^{\mathsf T}&0\end{pmatrix}
\]
and
\[
  BB^{\mathsf T}=(k-\lambda)I+\lambda J.
\]

\begin{theorem}\label{thm:design}
Let $G$ be the incidence graph of a nontrivial symmetric $2$-$(v,k,\lambda)$ design, and let $\Gamma$ be a finite abelian group with invariant factors $d_1\mid\cdots\mid d_r$.
\begin{enumerate}[label=\emph{(\alph*)},leftmargin=2.2em]
\item If $G$ admits an affinely generating $\Gamma$-magic map, then
\[
  d_r=\ex(\Gamma)\mid k(k-\lambda).
\]
\item If $r\ge2$ and $G$ admits a generating $\Gamma$-magic map, then
\[
  d_{r-1}\mid k(k-\lambda).
\]
\end{enumerate}
In particular, if $|\Gamma|=2v$ and $G$ admits a $\Gamma$-distance magic labeling, then
\[
  \Gamma\hookrightarrow\Sred(G)
  \qquad\text{and}\qquad
  2v\mid k(k-\lambda)^{v-1}.
\]
\end{theorem}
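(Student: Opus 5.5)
The plan is to follow the proof of Theorem~\ref{thm:srg}: find a polynomial identity satisfied by $\overline A$ on $\Lamone$, use it to show that $\overline A$ is nonsingular over $\mathbb Q$ and that $k(k-\lambda)$ annihilates $\Sred(G)$, and then apply Corollary~\ref{cor:main-smith}.

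\emph{Step 1: a polynomial identity for $A$.} Since $\mathcal D$ is a symmetric design, $B$ has constant row and column sums $k$, so $BJ=JB=kJ$. Also $B^{\mathsf T}B=(k-\lambda)I+\lambda J$, because $B$ is square, nonsingular and commutes with $J$. Hence
\[
  A^2-(k-\lambda)I=\lambda\begin{pmatrix}J&0\\0&J\end{pmatrix}.
\]
Multiplying on the left by $A$ gives
\[
  A\bigl(A^2-(k-\lambda)I\bigr)=\lambda k\begin{pmatrix}0&J\\J&0\end{pmatrix}.
\]
Adding $k$ times the first identity to the second, the right-hand sides combine into $\lambda k J_{2v}$:
\[
  (A+kI)\bigl(A^2-(k-\lambda)I\bigr)=\lambda kJ_{2v}.
\]
Since $J_{2v}x$ is a constant vector for every $x$, it vanishes in $\Lamone$. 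Therefore
\[
  (\overline A+k)(\overline A^2-(k-\lambda))=0
  \qquad\text{on }\Lamone .
\]

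\emph{Step 2: nonsingularity.} Nontriviality of the design gives $k>\lambda$, so the constant term $-k(k-\lambda)$ of this cubic is nonzero. If $\overline A x=0$ over $\mathbb Q$, substituting into the identity gives $-k(k-\lambda)x=0$, so $x=0$. Hence $\overline A$ is nonsingular over $\mathbb Q$, and $\Sred(G)$ is finite.

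\emph{Step 3: the annihilator of $\Sred(G)$.} Expanding the identity, every term except $-k(k-\lambda)$ lies in $\overline A(\Lamone)$. Passing to the cokernel therefore gives $k(k-\lambda)\Sred(G)=0$. Corollary~\ref{cor:main-smith}\emph{(b)} embeds $\Gamma$ into $\Sred(G)$ under affine generation, which gives part~\emph{(a)}. Corollary~\ref{cor:main-smith}\emph{(a)} embeds $\BG$ under ordinary generation, and $\ex(\BG)=d_{r-1}$ for $r\ge2$, which gives part~\emph{(b)}.

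\emph{Step 4: the final assertion.} A $\Gamma$-distance magic labeling is affinely generating, so $\Gamma\hookrightarrow\Sred(G)$ by Corollary~\ref{cor:main-smith}\emph{(c)}. Then $2v=|\Gamma|$ divides $|\det A|/k$ by Proposition~\ref{prop:smith-exact}. It remains to compute $\det A$. From $BB^{\mathsf T}=(k-\lambda)I+\lambda J$, whose eigenvalues are $k^2$ once and $k-\lambda$ with multiplicity $v-1$, we get
\[
  (\det B)^2=k^2(k-\lambda)^{v-1}.
\]
Since $|\det A|=(\det B)^2$, it follows that $|\det A|/k=k(k-\lambda)^{v-1}$, as claimed.

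The main obstacle is Step 1. The naive identity $A^2=(k-\lambda)I+\lambda J$ is false here, because $A^2$ is block diagonal with a block-diagonal $J$-part rather than the full $J_{2v}$. The needed trick is the extra factor $A+kI$, which converts the block-diagonal $J$ into the full all-one matrix that dies in $\Lamone$. This factor is also the source of the factor $k$ in $k(k-\lambda)$.
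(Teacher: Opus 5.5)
Your proof is correct and follows the paper's route: the same cubic $(x+k)\bigl(x^2-(k-\lambda)\bigr)$ annihilating $\overline A$ on $\Lamone$, its constant term giving $k(k-\lambda)\Sred(G)=0$, and Corollary~\ref{cor:main-smith} for parts (a), (b) and the embedding. The differences are cosmetic. You obtain the identity as the integer matrix equation $(A+kI)\bigl(A^2-(k-\lambda)I\bigr)=\lambda kJ_{2v}$ rather than via the class $u$ with $\overline Au=-ku$, and you derive nonsingularity and $|\Sred(G)|=k(k-\lambda)^{v-1}$ from the nonzero constant term and Proposition~\ref{prop:smith-exact} with $|\det A|=(\det B)^2$, instead of from the reduced eigenvalues.
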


\begin{proof}
Let $\mathbf1_P$ and $\mathbf1_B$ denote the all-one vectors on the point and block parts, so that
\[
  B\mathbf1_B=k\mathbf1_P
  \qquad\text{and}\qquad
  B^{\mathsf T}\mathbf1_P=k\mathbf1_B.
\]
The vector $(\mathbf1_P,\mathbf1_B)$ is the global constant vector and disappears in the reduced quotient. Write $u$ for the class of $(\mathbf1_P,0)$ in $\Lamone$. Then $(\mathbf1_P,0)\equiv-(0,\mathbf1_B)$ modulo the constants, and the second displayed identity gives
\[
  \overline A u=-ku.
\]

For a symmetric design one has $BB^{\mathsf T}=B^{\mathsf T}B=(k-\lambda)I+\lambda J$, hence
\[
  A^2=(k-\lambda)I+\lambda
  \begin{pmatrix}J&0\\0&J\end{pmatrix}.
\]
The operator on the right sends $(x,y)$ to $\bigl((\textstyle\sum x)\mathbf1_P,(\textstyle\sum y)\mathbf1_B\bigr)$, whose class in $\Lamone$ is an integral multiple of $u$. Therefore $\overline A^2-(k-\lambda)I$ has image contained in $\Z u$, and since $\overline A u=-ku$,
\[
  (\overline A+kI)\bigl(\overline A^2-(k-\lambda)I\bigr)=0
\]
as an identity of endomorphisms of the lattice $\Lamone$. The corresponding polynomial $(x+k)\bigl(x^2-(k-\lambda)\bigr)$ has constant term $-k(k-\lambda)$, so all its other terms vanish in $\operatorname{coker}\overline A$ and
\[
  k(k-\lambda)\Sred(G)=0.
\]
The reduced eigenvalues are $-k$ once and $\pm\sqrt{k-\lambda}$, each with multiplicity $v-1$. In particular, $\overline A$ is nonsingular over $\mathbb Q$, so $\Sred(G)$ is finite. Parts~\emph{(a)} and \emph{(b)} now follow from Corollary~\ref{cor:main-smith}\emph{(b)} and \emph{(a)}, respectively.

From the same eigenvalues,
\[
  |\Sred(G)|=|\det\overline A|=k(k-\lambda)^{v-1}.
\]
The final assertion now follows because a $\Gamma$-distance magic labeling is affinely generating. Corollary~\ref{cor:main-smith}\emph{(b)} gives the displayed embedding, and Lagrange's theorem gives the order divisibility.
\end{proof}

\begin{corollary}\label{cor:design-coprime}
Let $G$ be the incidence graph of a nontrivial symmetric $2$-$(v,k,\lambda)$ design. If $\gcd(v,k)=1$, then, for every abelian group $\Gamma$ of order $2v$, there is no affinely generating $\Gamma$-magic map and hence no $\Gamma$-distance magic labeling, while a generating $\Gamma$-magic map exists if and only if $\Gamma$ is cyclic.
\end{corollary}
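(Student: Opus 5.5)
The plan is to apply Theorem~\ref{thm:design} and show that, when $\gcd(v,k)=1$, no prime divisor of $2v$ can divide $d_r$, and none can divide $d_{r-1}$ in the noncyclic case. Let $\Gamma$ have order $2v$ with invariant factors $d_1\mid\cdots\mid d_r$. The key arithmetic input is $\gcd(v,k(k-\lambda))$. From the standard design identity $\lambda(v-1)=k(k-1)$, any prime $p\mid v$ with $p\mid k-\lambda$ satisfies
\[
  \lambda\equiv k\pmod p
  \quad\text{and}\quad
  -\lambda\equiv k(k-1)\pmod p,
\]
so $-k\equiv k^2-k$, i.e.\ $p\mid k^2$ and hence $p\mid k$. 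Combined with $\gcd(v,k)=1$, this shows $\gcd(v,k(k-\lambda))=1$.

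The prime $2$ needs separate care, since $2\mid|\Gamma|$ always. Here I would argue that $k(k-\lambda)$ is in fact always even for a nontrivial symmetric design. If $k$ is odd and $v$ is even, the Schutzenberger condition says $k-\lambda$ is a square; that alone does not force evenness, so I expect this to be the main obstacle. A cleaner route avoids parity of $k(k-\lambda)$ altogether. Affine generation needs $d_r\mid k(k-\lambda)$, and every prime divisor of $|\Gamma|=2v$ divides $d_r$. If $v$ has an odd prime divisor $p$, then $p\nmid k(k-\lambda)$ by the computation above, which is a contradiction. If $v$ is a power of $2$, then $k$ is odd, and $\lambda(v-1)=k(k-1)$ with $v-1$ odd makes $\lambda$ even, so $k-\lambda$ is odd and $k(k-\lambda)$ is odd. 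Then $2\mid d_r$ gives a contradiction. In either case Theorem~\ref{thm:design}(a) rules out affinely generating maps, and hence distance magic labelings by Proposition~\ref{prop:affine-basic}(e). (Nontriviality gives $v\ge 3$ and $v>1$, so some prime divides $v$.)

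For the generating statement, cyclic $\Gamma$ admits the constant map at a generator by regularity. If $\Gamma$ is noncyclic, then $r\ge2$, and any prime $p\mid d_{r-1}$ satisfies $p^2\mid d_{r-1}d_r\mid 2v$. If $p$ is odd, then $p\mid v$, contradicting Theorem~\ref{thm:design}(b) by the gcd computation. If $p=2$, then $4\mid 2v$, so $v$ is even and $k$ is odd. I would again use $\lambda(v-1)=k(k-1)$: $v-1$ is odd and $k(k-1)$ is even, so $\lambda$ is even and $k-\lambda$ is odd. Thus $k(k-\lambda)$ is odd, contradicting $2\mid d_{r-1}\mid k(k-\lambda)$.

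Note that this parity argument covers all even $v$, not just powers of $2$. This means the affine case can also be argued purely by the existence of any prime divisor of $v$, together with this parity fact.
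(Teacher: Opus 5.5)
Your proof is correct and follows essentially the paper's route: Theorem~\ref{thm:design}(a),(b), combined with the fact from $\lambda(v-1)=k(k-1)$ that a prime $p\mid v$ divides $k(k-\lambda)$ only if it divides $k$. The separate parity arguments for $p=2$ are redundant, since your congruence argument never uses that $p$ is odd and $p^2\mid 2v$ already gives $p\mid v$ when $p=2$; also, your abandoned claim that $k(k-\lambda)$ is always even is false, as the $2$-$(11,5,2)$ biplane has $k(k-\lambda)=15$.
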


\begin{proof}

Assume $\gcd(v,k)=1$. Let $p$ be a prime divisor of $v$. The design identity
\[
  k(k-1)=\lambda(v-1)
\]
gives
\[
  k-\lambda\equiv k^2\pmod p.
\]
Thus
\[
  p\mid k(k-\lambda)
  \quad\Longleftrightarrow\quad
  p\mid k.
\]
Since $p\nmid k$, the integer $k(k-\lambda)$ is not divisible by $p$. Every group of order $2v$ has exponent divisible by $p$, so Theorem~\ref{thm:design}\emph{(a)} excludes affine generation.

Suppose now that $\Gamma$ is noncyclic and that $G$ admits a generating $\Gamma$-magic map. Choose a prime $p\mid d_{r-1}$. Since $d_{r-1}\mid d_r$, one has $p^2\mid2v$, and hence $p\mid v$. Theorem~\ref{thm:design}\emph{(b)} gives $p\mid k(k-\lambda)$, contradicting the preceding calculation. Therefore no generating $\Gamma$-magic map exists when $\Gamma$ is noncyclic. The converse for cyclic groups follows from the constant-generator construction.
\end{proof}

\begin{corollary}\label{cor:design-parity}
Let $G$ be the incidence graph of a nontrivial symmetric $2$-$(v,k,\lambda)$ design. If $k$ is odd and $\lambda$ is even, then, for every finite abelian group $\Gamma$ of even order, there is no affinely generating $\Gamma$-magic map. Hence there is no $\Gamma$-distance magic labeling for any abelian group $\Gamma$ of order $2v$.
\end{corollary}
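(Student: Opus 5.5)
The natural route is through Theorem~\ref{thm:design}\emph{(a)}. If $\Gamma$ has even order, then $2\mid\ex(\Gamma)=d_r$. So if $G$ admitted an affinely generating $\Gamma$-magic map, we would get $d_r\mid k(k-\lambda)$, which forces $2\mid k(k-\lambda)$. Under the hypotheses, $k$ is odd and $\lambda$ is even, so $k-\lambda$ is odd. Hence $k(k-\lambda)$ is odd, a contradiction. This rules out affinely generating $\Gamma$-magic maps for every finite abelian group $\Gamma$ of even order.

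The final sentence then follows immediately. Since $2v$ is even, every abelian group of order $2v$ is covered by the first part. By Proposition~\ref{prop:affine-basic}\emph{(e)}, every $\Gamma$-distance magic labeling is affinely generating, so none exists.

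The only point to double-check is that Theorem~\ref{thm:design}\emph{(a)} applies with no hypothesis on $|\Gamma|$. Its statement allows an arbitrary finite abelian group, and $\Sred(G)$ is finite there, so there is no real obstacle.

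As a sanity check, the same conclusion can be reached directly with $m=2$, via Lemma~\ref{lem:affine-param}\emph{(a)}. One would show $\ker\overline A_2=0$: over $\F_2$, the relation $(\overline A+k)(\overline A^2-(k-\lambda))=0$ from the proof of Theorem~\ref{thm:design} becomes $(\overline A+1)(\overline A^2+1)=(\overline A+1)^3=0$. Thus $\overline A_2$ is unipotent and in particular invertible. Since $\operatorname{Hom}(\Gamma,\F_2)\neq0$ for $\Gamma$ of even order, Lemma~\ref{lem:affine-param}\emph{(a)} would then give a contradiction. I would present the short divisibility argument as the main proof.
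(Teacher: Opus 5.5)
Your proposal is correct and follows the paper's proof: $k$ and $k-\lambda$ are both odd while $\ex(\Gamma)$ is even for $|\Gamma|$ even, so Theorem~\ref{thm:design}\emph{(a)} rules out affine generation, and the distance-magic statement follows because such labelings are affinely generating. Your alternative check is also valid but not needed: it shows $\ker\overline A_2=0$ from $(\overline A_2+I)^3=0$ and then applies Lemma~\ref{lem:affine-param}\emph{(a)} with $m=2$.
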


\begin{proof}
Both $k$ and $k-\lambda$ are odd, so $k(k-\lambda)$ is odd. If $|\Gamma|$ is even, then $\ex(\Gamma)$ is even and therefore cannot divide $k(k-\lambda)$. Theorem~\ref{thm:design}\emph{(a)} excludes affine generation, and the distance-magic assertion follows.
\end{proof}

We recall the design families used below. A finite projective plane of order $q$ is a symmetric $2$-$(q^2+q+1,q+1,1)$ design. For a prime power $q$ and $m\ge2$, let $PG(m,q)$ denote the $m$-dimensional projective space over $\F_q$. Its points and hyperplanes, with incidence given by containment, form a symmetric $2$-$\bigl(\sum_{i=0}^{m}q^i,\sum_{i=0}^{m-1}q^i,\sum_{i=0}^{m-2}q^i\bigr)$ design.
A \emph{Hadamard symmetric design} has parameters $2$-$(4u-1,2u-1,u-1)$. Its complementary design has parameters $2$-$(4u-1,2u,u)$. See~\cite{BvM22} for background on these families.

\begin{corollary}\label{cor:design-families}
For each of the following graphs and every abelian group $\Gamma$ of the same order, there is no affinely generating $\Gamma$-magic map and hence no $\Gamma$-distance magic labeling, while a generating $\Gamma$-magic map exists if and only if $\Gamma$ is cyclic.
\begin{enumerate}[label=\emph{(\alph*)},leftmargin=2.2em]
\item The incidence graph of a finite projective plane.
\item The incidence graph of the point-hyperplane design of $PG(m,q)$ with $m\ge2$.
\item The incidence graph of a Hadamard symmetric design $2$-$(4u-1,2u-1,u-1)$ with $u\ge2$.
\item The incidence graph of the complementary Hadamard symmetric design $2$-$(4u-1,2u,u)$ with $u\ge2$.
\end{enumerate}
\end{corollary}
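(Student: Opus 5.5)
The plan is to derive every part from Corollary~\ref{cor:design-coprime} where possible, and from Corollary~\ref{cor:design-parity} together with a separate noncyclic argument otherwise. Since each graph has order $2v$, in every case it suffices to show either $\gcd(v,k)=1$, or that $k(k-\lambda)$ is coprime to every prime $p$ with $p\mid v$. In the latter situation Theorem~\ref{thm:design}\emph{(a)} excludes affine generation, because $\ex(\Gamma)$ has the same prime divisors as $2v$. Theorem~\ref{thm:design}\emph{(b)} also excludes noncyclic $\Gamma$: a prime $p\mid d_{r-1}$ gives $p^2\mid 2v$, hence $p\mid v$. For cyclic $\Gamma$ the constant-generator map works.

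\emph{Projective planes.} Here $v=q^2+q+1$ and $k=q+1$. We have $v=q(q+1)+1$, so $\gcd(v,k)=1$, and Corollary~\ref{cor:design-coprime} applies directly.

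\emph{$PG(m,q)$.} Here $v=\sum_{i=0}^m q^i$ and $k=\sum_{i=0}^{m-1}q^i$. Since $v=qk+1$, again $\gcd(v,k)=1$.

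\emph{Hadamard design $2$-$(4u-1,2u-1,u-1)$.} Here $v=4u-1$ and $k=2u-1$. We have $2k=v-1$, so any common prime divisor of $v$ and $k$ divides $1$. Since $v$ is odd, this gives $\gcd(v,k)=1$.

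\emph{Complementary Hadamard design $2$-$(4u-1,2u,u)$.} Here $k=2u$ and $v=2k-1$, so $\gcd(v,k)=1$ again.

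Corollary~\ref{cor:design-coprime} then finishes each case. The only thing to check is that each design is nontrivial, which is where the hypotheses $m\ge2$ and $u\ge2$ enter; for instance, $u=1$ gives $k=1$ or a degenerate design. I expect no real obstacle. The main care needed is confirming the nontriviality conditions ($2\le k\le v-2$ or the paper's convention) for the small-parameter cases. For the Hadamard case with $u\ge2$, we have $k=2u-1\ge3$ and $v-k=2u\ge4$, which is fine.
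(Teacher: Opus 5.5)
Your proposal is correct and follows the paper's proof. In each case you verify $\gcd(v,k)=1$, via $v=qk+1$ for the two projective families and $v=2k+1$, $v=2k-1$ for the Hadamard pair, and then invoke Corollary~\ref{cor:design-coprime}; the fallback via Corollary~\ref{cor:design-parity} and the appeal to $v$ being odd are never needed, since $v-2k=1$ already forces $\gcd(v,k)=1$.
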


\begin{proof}
For a projective plane, $v=q^2+q+1$ and $k=q+1$, so $v-qk=1$. For the point-hyperplane design of $PG(m,q)$,
\[
  v=1+q+\cdots+q^m
  \qquad\text{and}\qquad
  k=1+q+\cdots+q^{m-1},
\]
so again $v-qk=1$. For the two Hadamard parameter sets, one has
\[
  \gcd(4u-1,2u-1)=1
  \qquad\text{and}\qquad
  \gcd(4u-1,2u)=1.
\]
Thus $\gcd(v,k)=1$ in every case, and Corollary~\ref{cor:design-coprime} gives the stated conclusions in every case.
\end{proof}

%======================================================================
\appendix
\section{The reduced adjacency matrix for the counterexample}\label{app:matrix}

For completeness, we record the matrix used in the proof of Theorem~\ref{thm:affine-counterexample}. With the vertices ordered as $1,\ldots,27$ and the basis $[e_1],\ldots,[e_{26}]$ of $\Lamone$, the reduced adjacency operator is represented by
\[
  M=(a_{ij}-a_{27,j})_{1\le i,j\le26}.
\]
Explicitly, with rows and columns indexed in the natural order $1,\ldots,26$,
\begingroup
\tiny
\setlength{\arraycolsep}{1.55pt}
\[
M=\left(\begin{array}{*{26}{r}}
  0 & 0 & 0 & 0 & -1 & -1 & 1 & 0 & 1 & 0 & 1 & 0 & -1 & 1 & -1 & 0 & 0 & 1 & -1 & 0 & 0 & 0 & 0 & 0 & 1 & -1 \\
  0 & 0 & 0 & 0 & -1 & -1 & 0 & 1 & 0 & 0 & 1 & 1 & 0 & 0 & -1 & 0 & 1 & 0 & -1 & 0 & 0 & 0 & 1 & 0 & 0 & -1 \\
  0 & 0 & 0 & 0 & 0 & 0 & 0 & 0 & 0 & 1 & 0 & 0 & -1 & 0 & -1 & 0 & 0 & 0 & 0 & 1 & 0 & 0 & 0 & 1 & 0 & -1 \\
  0 & 0 & 0 & 0 & -1 & -1 & 1 & 1 & 1 & 0 & 0 & 0 & -1 & 0 & 0 & 0 & 0 & 0 & 0 & 0 & 0 & 0 & 0 & 0 & 0 & 0 \\
  0 & 0 & 1 & 0 & -1 & 0 & 0 & 0 & 0 & 1 & 0 & 0 & -1 & 0 & -1 & 1 & 0 & 0 & -1 & 0 & 0 & 1 & 0 & 0 & 0 & -1 \\
  0 & 0 & 1 & 0 & 0 & -1 & 0 & 0 & 0 & 0 & 0 & 1 & -1 & 1 & -1 & 0 & 0 & 0 & -1 & 0 & 0 & 0 & 1 & 0 & 0 & -1 \\
  1 & 0 & 0 & 1 & -1 & -1 & 0 & 0 & 0 & 0 & 0 & 0 & 0 & 0 & -1 & 0 & 0 & 0 & 0 & 0 & 1 & 0 & 0 & 0 & 0 & 0 \\
  0 & 1 & 0 & 1 & -1 & -1 & 0 & 0 & 0 & 0 & 0 & 0 & -1 & 0 & 0 & 0 & 0 & 1 & -1 & 0 & 1 & 0 & 0 & 1 & 0 & -1 \\
  1 & 0 & 0 & 1 & -1 & -1 & 0 & 0 & 0 & 0 & 0 & 0 & -1 & 0 & -1 & 1 & 0 & 0 & -1 & 1 & 1 & 1 & 0 & 0 & 0 & -1 \\
  0 & 0 & 1 & 0 & 0 & -1 & 0 & 0 & 0 & 0 & 0 & 1 & 0 & 0 & -1 & 0 & 0 & 0 & -1 & 0 & 0 & 0 & 0 & 0 & 1 & 0 \\
  1 & 1 & 0 & 0 & -1 & -1 & 0 & 0 & 0 & 0 & 0 & 0 & -1 & 0 & -1 & 1 & 1 & 0 & -1 & 0 & 0 & 1 & 0 & 0 & 0 & 0 \\
  0 & 1 & 0 & 0 & -1 & 0 & 0 & 0 & 0 & 1 & 0 & 0 & -1 & 0 & -1 & 0 & 1 & 0 & -1 & 0 & 0 & 0 & 0 & 1 & 1 & -1 \\
  0 & 1 & 0 & 0 & -1 & -1 & 1 & 0 & 0 & 1 & 0 & 0 & -1 & 0 & 0 & 0 & 0 & 0 & -1 & 0 & 0 & 0 & 1 & 0 & 0 & -1 \\
  1 & 0 & 0 & 0 & -1 & 0 & 0 & 0 & 0 & 0 & 0 & 0 & -1 & 0 & 0 & 0 & 1 & 0 & -1 & 1 & 0 & 0 & 0 & 0 & 1 & -1 \\
  0 & 0 & 0 & 1 & -1 & -1 & 0 & 1 & 0 & 0 & 0 & 0 & 0 & 1 & -1 & 0 & 0 & 1 & -1 & 0 & 0 & 0 & 0 & 0 & 0 & -1 \\
  0 & 0 & 0 & 0 & 0 & -1 & 0 & 0 & 1 & 0 & 1 & 0 & -1 & 0 & -1 & 0 & 0 & 1 & -1 & 1 & 1 & 0 & 0 & 0 & 0 & -1 \\
  0 & 1 & 0 & 0 & -1 & -1 & 0 & 0 & 0 & 0 & 1 & 1 & -1 & 1 & -1 & 0 & 0 & 0 & -1 & 0 & 0 & 1 & 1 & 0 & 0 & -1 \\
  1 & 0 & 0 & 0 & -1 & -1 & 0 & 1 & 0 & 0 & 0 & 0 & -1 & 0 & 0 & 1 & 0 & 0 & -1 & 1 & 0 & 0 & 0 & 1 & 0 & -1 \\
  0 & 0 & 1 & 1 & -1 & -1 & 1 & 0 & 0 & 0 & 0 & 0 & -1 & 0 & -1 & 0 & 0 & 0 & -1 & 0 & 1 & 1 & 0 & 0 & 0 & -1 \\
  0 & 0 & 1 & 0 & -1 & -1 & 0 & 0 & 1 & 0 & 0 & 0 & -1 & 1 & -1 & 1 & 0 & 1 & -1 & 0 & 0 & 0 & 0 & 1 & 0 & -1 \\
  0 & 0 & 0 & 0 & -1 & -1 & 1 & 1 & 1 & 0 & 0 & 0 & -1 & 0 & -1 & 1 & 0 & 0 & 0 & 0 & 0 & 0 & 0 & 0 & 1 & -1 \\
  0 & 0 & 0 & 0 & 0 & -1 & 0 & 0 & 1 & 0 & 1 & 0 & -1 & 0 & -1 & 0 & 1 & 0 & 0 & 0 & 0 & 0 & 1 & 0 & 0 & -1 \\
  0 & 1 & 0 & 0 & -1 & 0 & 0 & 0 & 0 & 0 & 0 & 0 & 0 & 0 & -1 & 0 & 1 & 0 & -1 & 0 & 0 & 1 & 0 & 1 & 0 & -1 \\
  0 & 0 & 1 & 0 & -1 & -1 & 0 & 1 & 0 & 0 & 0 & 1 & -1 & 0 & -1 & 0 & 0 & 1 & -1 & 1 & 0 & 0 & 1 & 0 & 0 & -1 \\
  1 & 0 & 0 & 0 & -1 & -1 & 0 & 0 & 0 & 1 & 0 & 1 & -1 & 1 & -1 & 0 & 0 & 0 & -1 & 0 & 1 & 0 & 0 & 0 & 0 & 0 \\
  0 & 0 & 0 & 1 & -1 & -1 & 1 & 0 & 0 & 1 & 1 & 0 & -1 & 0 & -1 & 0 & 0 & 0 & -1 & 0 & 0 & 0 & 0 & 0 & 1 & -1 \\
\end{array}\right).
\]
\endgroup
The minor $M'$ used in the proof is obtained from this matrix by retaining rows $3,\ldots,26$ and columns $1,\ldots,24$.

%======================================================================
\section{Computational verification}\label{app:verification}

The following Python code verifies the computational statements used in Theorem~\ref{thm:affine-counterexample} and Remark~\ref{rem:affine-counterexample-computation}. It reconstructs the graph from the neighborhood data in the theorem, verifies that the graph is simple and $6$-regular, checks that the identity labeling has magic constant $84$, forms the reduced adjacency matrix, verifies the two independent null vectors used in the proof, computes its ranks over $\F_3$ and $\mathbb Q$, and computes the determinant of the $24\times24$ minor used in the proof. The computation uses Python~3 and SymPy with exact arithmetic. No Smith normal form computation is required.

\begin{lstlisting}[language=Python]
from sympy import Matrix, GF
from sympy.polys.matrices import DomainMatrix

rows = [
 [7,9,11,14,18,25], [8,11,12,13,17,23], [5,6,10,19,20,24],
 [7,8,9,15,19,26], [3,6,10,16,22,27], [3,5,12,14,23,27],
 [1,4,13,19,21,26], [2,4,15,18,21,24], [1,4,16,20,21,22],
 [3,5,12,13,25,26], [1,2,16,17,22,26], [2,6,10,17,24,25],
 [2,7,10,15,23,27], [1,6,15,17,20,25], [4,8,13,14,18,27],
 [5,9,11,18,20,21], [2,11,12,14,22,23], [1,8,15,16,20,24],
 [3,4,7,21,22,27], [3,9,14,16,18,24], [7,8,9,16,19,25],
 [5,9,11,17,19,23], [2,6,13,17,22,24], [3,8,12,18,20,23],
 [1,10,12,14,21,26], [4,7,10,11,25,27], [5,6,13,15,19,26]
]
N = {i + 1: set(row) for i, row in enumerate(rows)}
V = range(1, 28)
assert all(len(N[v]) == 6 and v not in N[v] for v in V)
assert all(v in N[u] for v in V for u in N[v])

A = Matrix(27, 27, lambda i, j: int(j + 1 in N[i + 1]))
assert A == A.T and all(A[i, i] == 0 for i in range(27))
assert all(sum(A[i, j] for j in range(27)) == 6 for i in range(27))
assert all(x == 84 for x in A * Matrix(list(V)))

M = Matrix(26, 26, lambda i, j: A[i, j] - A[26, j])
u = Matrix([1,2,0,1,2,0,1,2,0,1,2,0,1,2,0,1,2,0,1,2,0,1,2,0,1,2])
v = Matrix([1,1,2,0,2,0,2,1,1,2,0,0,2,0,2,0,1,1,0,0,0,0,0,0,0,1])
assert all(int(x) % 3 == 0 for x in M * u)
assert all(int(x) % 3 == 0 for x in M * v)
assert not all(int(x) % 3 == 0 for x in u)
assert not all(int(x) % 3 == 0 for x in v)
assert (u[24] % 3, u[25] % 3) == (1, 2)
assert (v[24] % 3, v[25] % 3) == (0, 1)

rank_mod_3 = DomainMatrix.from_Matrix(M).convert_to(GF(3)).rank()
assert rank_mod_3 == 24
rank_over_Q = M.rank()
assert rank_over_Q == 25

Mp = M.extract(range(2, 26), range(24))
assert Mp.det() == -11
assert Mp.det() % 3 != 0
print("rank_F3(M) =", rank_mod_3)
print("rank_Q(M) =", rank_over_Q)
print("det(M') =", Mp.det())
\end{lstlisting}

%======================================================================

\end{document}